\newtheoremstyle{mythm}{1.5ex plus 1ex minus .2ex}{1.5ex plus 1ex
minus .2ex}{\kai}{\parindent}{\song\bfseries}{}{1em}{}
\numberwithin{equation}{section}
\newtheorem{theorem}{Theorem}[section]
\newtheorem{lemma}{Lemma}[section]
\newtheorem{remark}{Remark} [section]
\begin{document}
\title{{\textbf{Existence of non-topological solutions for a skew-symmetric Chern-Simons system}}}

\author{Genggeng Huang\footnote{genggenghuang@sjtu.edu.cn}\quad and Chang-Shou Lin\footnote{cslin@math.ntu.edu.tw}}

\date{}
\maketitle
\begin{center}{\footnotesize{$^*$ Department of Mathematics, INS and MOE-LSC, Shanghai Jiao Tong University, Shanghai, China}}\\
{\footnotesize{$^\dag$  Taida Institute for Mathematical Sciences, Center for Advanced Study in Theoretical Science,\\
National Taiwan University, Taipei, 10617, Taiwan}}\end{center}

\begin{abstract}
We investigate the existence of non-topological solutions $(u_1,u_2)$ satisfying
 $$u_{i}(x)=-2\beta_i\ln|x|+O(1),\quad\text{as }|x|\rightarrow +\infty,$$ such that $\beta_i>1$ and
 $$(\beta_1-1)(\beta_2-1)>(N_1+1)(N_2+1),$$
 for a skew-symmetric Chern-Simons system.  By the bubbling analysis and the Leray-Schauder degree theory, we get the existence results except for a finite set of curves:
$$\frac{N_1}{\beta_1+N_1}+\frac{N_2}{\beta_2+N_2}=\frac{k-1}{k},k=2,\cdots,\max(N_1,N_2).$$
This generalizes a previous work  by Choe-Kim-Lin \cite{ChoeKimLin2011}.
\par {\bf 2010 Mathematics Subject Classification. 35J60,35J57
\par Keywords: Semi-linear PDE; Non-topological solutions; Skew-symmetric Chern-Simons system}
\end{abstract}

\section{Introduction}

In this paper, we study the nonlinear elliptic system:
\begin{equation}\label{CSS-1}
\begin{cases}
\Delta u_1+\lambda e^{u_2}(1-e^{u_1})=\displaystyle 4\pi\sum_{i=1}^{N_1}\delta_{p_{1i}},\\
\Delta u_2+\lambda e^{u_1}(1-e^{u_2})=\displaystyle 4\pi\sum_{i=1}^{N_2}\delta_{p_{2i}},
\end{cases}\text{ in } \mathbb R^2,
\end{equation}
where $\lambda>0$ and $\delta_p$ is the Dirac measure at $p$. System \eqref{CSS-1} arises in a relativistic Abelian Chern-Simons model involving two Higgs scalar fields and two gauge fields studied in \cite{Dziarmaga1994,KimLeeKoLeeMin1993}. The Chern-Simons action density for this physics model is defined on the $(2+1)$-dimensional Minkowski space $\mathbb R^{2,1}$ by
$$\mathcal L=-\frac 14 \kappa \epsilon^{rst}A_r^{(1)}F_{st}^{(2)}-\frac 14\kappa \epsilon^{rst}A_r^{(2)}F^{(1)}_{st}+\overline{D_r\phi_i}D^r\phi_i-V(\phi_1,\phi_2),$$
where $\phi_i,i=1,2$, are two complex scalar field representing two Higgs particles of charges $q_1$ and $q_2$. $A^{(i)}_r,i=1,2$, are two gauge fields with the induced electromagnetic fields $F_{rs}^{(i)}=\partial_r A_s^{(i)}-\partial_s A_r^{(i)}$, $r,s=0,1,2$, and $\kappa>0$ is the coupling constant, $D_r\phi^{(i)}=\partial_r\phi^{(i)}- \sqrt{-1} q_i A_r^{(i)}\phi^{(i)}$ is the covariant derivatives and
$$V(\phi_1,\phi_2)=\frac{q_1^2q_2^2}{\kappa^2}\left[|\phi_2|^2(|\phi_1|^2-c_1^2)^2+|\phi_1|^2(|\phi_2|^2-c_2^2)^2\right]$$
is the Higgs potential density. Even for a stationary solution, the Euler-Lagrangian equation for $\mathcal L$ is very complicated. In \cite{Dziarmaga1994,KimLeeKoLeeMin1993}, the authors considered the minimizer for the associated energy, which satisfies the following self-dual equation:
\begin{equation}
\label{phy1}
\begin{split}
& D_1\phi^{(i)}\pm D_2\phi^{(i)}=0\\
& F_{12}^{(i)}\pm \frac{2q_iq^2_{i+1}}{\kappa^2}|\phi^{(i+1)}|^2(|\phi^{(i)}|^2-c_i^2)=0,
\end{split}
\end{equation}
where for the simplicity of notation, we let $\phi^{(i)}\equiv \phi^{(j)},q_i\equiv q_j$ if $i\equiv j \ (mod \   2)$.
Let $u_i=\ln |\phi_i|^2$ and $p_{ij}$ are the zeros of $\phi_i$, $i=1,2$. Then system \eqref{phy1} can be transformed to \eqref{CSS-1}.
\par We note that from the second equation of \eqref{phy1}, both the quantities
\begin{equation}
\label{phy2}
\lambda\int_{\mathbb R^2} e^{u_2}(1-e^{u_1})dx \text{ and }\lambda\int_{\mathbb R^2} e^{u_1}(1-e^{u_2})dx
\end{equation}
represent the total magnetic flux for both components $u_i$, $i=1,2.$ Therefore, from the physic's point of view, it is important for us to find solutions with finite integral of \eqref{phy2} for both components. In literature, a solution $u=(u_1,u_2)$ is called topological if
$$u_i(x)\rightarrow 0\quad \text{as }|x|\rightarrow +\infty,i=1,2,$$
and is called non-topological if
$$u_i(x)\rightarrow -\infty\quad \text{as } |x|\rightarrow +\infty,i=1,2.$$
If $u$ is a topological solution, then $u_i(x)$ decays exponentially to zero, thus, both $e^{u_2}(1-e^{u_1})$ and $e^{u_1}(1-e^{u_2})\in L^1(\mathbb R^2)$.
If $u$ is a non-topological solution, then one of  $e^{u_2}(1-e^{u_1})$ and $e^{u_1}(1-e^{u_2})$ might be not in $L^1(\mathbb R^2)$. See \cite{ChernChenLin2010}.
\par For any configuration $\{p_{11},\cdots,p_{1N_1},p_{21},\cdots,p_{2N_2}\}$ in $\mathbb R^2$, the existence of a topological solution was obtained by Lin-Ponce-Yang \cite{LinPonceYang2007}. The proof was rather complicated if we compare it with the case of $SU(n)$ Chern-Simons system, as shown in \cite{Yang1997}, because system \eqref{CSS-1} is skew-symmetric, which is explained in the following.
\par To solve \eqref{CSS-1}, we might first solve the following regularized form:
\begin{equation}
\label{regular1}
\begin{cases}
\Delta u_1+\lambda e^{u_2}(1-e^{u_1})=\displaystyle \sum_{i=1}^{N_1}\frac{4\kappa}{(\kappa+|x-p_{1i}|^2)^2},\\
\Delta u_2+\lambda e^{u_1}(1-e^{u_2})=\displaystyle \sum_{i=1}^{N_2}\frac{4\kappa}{(\kappa+|x-p_{2i}|^2)^2},
\end{cases}
\end{equation}where $\kappa$ is a small positive constant. Then by letting $\kappa\rightarrow 0$, we could find a solution of \eqref{CSS-1}. To apply the variational method, we introduce the background functions,
$$u_0^\kappa(x)=\sum_{i=1}^{N_1}\ln\left(\frac{\kappa+|x-p_{1i}|^2}{1+|x-p_{1i}|^2}\right),
v_0^\kappa(x)=\sum_{i=1}^{N_2}\ln\left(\frac{\kappa+|x-p_{2i}|^2}{1+|x-p_{2i}|^2}\right).$$
Replacing $u_i$ by  $u_1-u_0^\kappa$ and  $u_2-v_0^\kappa$, the regularized form \eqref{regular1} becomes
\begin{equation}\label{regular2}
\begin{cases}
\Delta u_1+ \lambda e^{v_0^\kappa+u_2}(1-e^{u_0^\kappa+u_1})=h_1,\\
\Delta u_2+ \lambda e^{u_0^\kappa+u_1}(1-e^{v_0^\kappa+u_2})=h_2,
\end{cases}
\end{equation} where $h_1,h_2\in W^{1,2}$ do not depend on $\kappa >0$. By a direct computation, one can see that \eqref{regular2} is the Euler-Lagrange equation of the nonlinear functional:
\begin{equation}\label{regular3}
\begin{split}
I(u_1,u_2)=\int(\nabla u_1\cdot \nabla u_2+ \lambda e^{u_0^\kappa+v_0^\kappa+u_1+u_2}-\lambda e^{u_0^\kappa+u_1}-\lambda e^{v_0^\kappa+u_2}+h_2u_1+h_1u_2)dx
\end{split}
\end{equation}
 From \eqref{regular3}, the quadratic form $\nabla u_1\cdot \nabla u_2$ is not coercive, and this fact alone makes system \eqref{CSS-1} very difficult to study by variational method. Indeed, system \eqref{CSS-1} is a typical example of so called ``skew-symmetric" system. For its precise definition and recent development, we refer \cite{Yanagida2001,Yanagida2002}.
\par  In this paper, we want to find the non-topological solutions with finite magnetic flux, i.e., for a given $(\beta_1,\beta_2)$ with $\beta_i>1$, $i=1,2$, we want to find a solution $(u_1,u_2)$ of \eqref{CSS-1} such that
\begin{equation}\label{CSS-2}
u_i(x)=-2\beta_i\ln |x|+O(1), \quad i=1,2, \text{ near }\infty.
\end{equation}
When $u_1(x)=u_2(x)$, system \eqref{CSS-1} is reduced to the Abelian Chern-Simons equation,
\begin{equation}\label{s1}
\Delta u+ \lambda e^{u}(1-e^u)=4\pi\sum_{i=1}^N \delta_{p_i}.
\end{equation}
Equation  \eqref{s1} has been extensively studied in the whole $\mathbb R^2$ to search for topological solutions, non-topological solutions or in a flat torus to search for vortex condensates satisfying the periodic boundary condition, introduced by 't Hooft \cite{Hooft1979}. We refer reader to \cite{CaffarelliYang1995,ChanFuLin2002,Choe2007,ChoeKim2008,ChoeKimLin2011,Dunne1999,JaffeTaubes1980,LinYan2013,NolascoTarantello2000,SpruckYang1992,Tarantello1996} and references therein for recent development. In particular, in \cite{ChoeKimLin2011}, Choe-Kim-Lin proved the following existence theorem of non-topological solutions for \eqref{s1}.

\smallskip
\noindent
{\bf Theorem A.} \cite{ChoeKimLin2011} {\it Let $p_1,\cdots, p_N\in \mathbb R^2$ be given. For any number $\beta>N+2$ satisfying $\beta\notin\{\frac{N (k+1)}{k-1}|k=2,3,\cdots,N\}$, there exists a solution $u$ of \eqref{s1} satisfying
$$u(x)=-2\beta\ln |x|+O(1),\quad\text{near }\infty.$$
}
\par Our main result is to generalize Theorem A to system \eqref{CSS-1}. First, we consider the case of \eqref{CSS-1} when all the vortex points collapse into one point and \eqref{CSS-1} becomes:
\begin{equation}\label{CSS-6}
\begin{cases}
\Delta u_1+\lambda e^{u_2}(1-e^{u_1})=4\pi N_1\delta_{0},\\
\Delta u_2+\lambda e^{u_1}(1-e^{u_2})=\displaystyle 4\pi N_2\delta_{0},
\end{cases}\text{ in } \mathbb R^2,
\end{equation}
where $N_1,N_2\geq 0$ and $\delta_0$ is the Dirac measure at $0$. For \eqref{CSS-6}, we consider $u_i$ to be radially symmetric.
\begin{theorem}\label{thmradial}
Given $\beta_i>1$, $i=1,2$ satisfying
\begin{equation}\label{CSS-3}
(\beta_1-1)(\beta_2-1)>(N_1+1)(N_2+1),
\end{equation}
there exists a radial solution $u=(u_1(r),u_2(r))$ of  \eqref{CSS-6} such that \eqref{CSS-2} holds.
\end{theorem}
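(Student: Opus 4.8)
The plan is to reduce \eqref{CSS-6} to a radial ODE system, extract from a Pohozaev identity the exact balance condition — which turns out to be precisely the hypothesis \eqref{CSS-3} — and then produce solutions by a Leray--Schauder degree argument, using a bubbling analysis for the a priori bounds. After the scaling $x\mapsto x/\sqrt{\lambda}$ we may take $\lambda=1$, and writing $u_i=2N_i\ln r+v_i$ with $v_i'(0)=0$ turns \eqref{CSS-6} into the regular system $v_i''+\frac1r v_i'+r^{2N_j}e^{v_j}\bigl(1-r^{2N_i}e^{v_i}\bigr)=0$ on $(0,\infty)$, where $\{i,j\}=\{1,2\}$. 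Integrating each equation of \eqref{CSS-6} over $\mathbb{R}^2$ and using \eqref{CSS-2} gives the flux relations $\int_{\mathbb{R}^2}e^{u_2}(1-e^{u_1})=4\pi(\beta_1+N_1)$ and $\int_{\mathbb{R}^2}e^{u_1}(1-e^{u_2})=4\pi(\beta_2+N_2)$; the requirement $\beta_i>1$ is what makes these integrals, and the interaction integral below, converge at infinity.

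The key preliminary step is a Pohozaev identity. Writing $t=\ln r$ and $P=e^{u_1},\,Q=e^{u_2}$, a short computation gives $\frac{d}{dt}(\dot u_1\dot u_2)=-e^{2t}\frac{d}{dt}(P+Q-PQ)$. Integrating over $t\in\mathbb{R}$, the boundary terms vanish exactly because $\beta_i>1$ (at $+\infty$) and $N_i\ge 0$ (at $-\infty$), and substituting the two flux relations yields
\begin{equation*}
(\beta_1-1)(\beta_2-1)-(N_1+1)(N_2+1)=\frac{1}{4\pi}\int_{\mathbb{R}^2}e^{u_1+u_2}\,dx>0 .
\end{equation*}
Thus \eqref{CSS-3} is not merely sufficient but necessary, the ``excess'' $(\beta_1-1)(\beta_2-1)-(N_1+1)(N_2+1)$ being exactly the interaction mass $\frac{1}{4\pi}\int e^{u_1+u_2}$. (Specializing to $u_1=u_2$, $N_1=N_2=N$, $\beta_1=\beta_2=\beta$ recovers $\beta>N+2$, the threshold in Theorem~A.) This identity is what pins down the admissible range and, more importantly, supplies the a priori control used below.

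For existence I would fix a smooth radial background $U_i$ carrying both the prescribed singularity $2N_i\ln r$ at the origin and the decay $-2\beta_i\ln r$ at infinity, and write $u_i=U_i+w_i$, reducing \eqref{CSS-6} to a system $-\Delta w_i=g_i(w_1,w_2)$ for a perturbation $w=(w_1,w_2)$ decaying at infinity. Inverting $-\Delta$ on a weighted radial Sobolev space, in which the relevant embeddings are compact, recasts the problem as a fixed point $w=T_\tau(w)$ for a compact operator, with $\tau\in[0,1]$ a homotopy parameter interpolating between a model problem at $\tau=0$ (whose degree is computable and nonzero) and \eqref{CSS-6} at $\tau=1$. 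One then establishes a uniform bound $\|w\|\le C$ along the homotopy, so that the Leray--Schauder degree is well defined and constant in $\tau$; its nonvanishing at $\tau=1$ yields a solution realizing the prescribed decay rates $(\beta_1,\beta_2)$.

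The main obstacle is precisely the a priori estimate. Because the coupling is skew-symmetric, each component is controlled only through the off-diagonal term $e^{u_j}$, so neither equation governs its own component directly and the energy $\int\nabla u_1\cdot\nabla u_2$ is indefinite (as stressed in the introduction), ruling out any naive maximum-principle or coercivity bound. The crux is therefore to run the bubbling analysis for the pair $(u_1,u_2)$ simultaneously and show that any blow-up of $e^{u_i}$ would force the interaction mass $\int e^{u_1+u_2}$, hence by the Pohozaev identity the excess $(\beta_1-1)(\beta_2-1)-(N_1+1)(N_2+1)$, to degenerate to $0$, contradicting the strict inequality \eqref{CSS-3}. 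Making this dichotomy quantitative — and checking that the linearization stays Fredholm along the homotopy so that the degree is genuinely well defined — is the technical heart of the argument; it is exactly this bubbling/degree interplay that, in the general multi-vortex case, also produces the excluded resonance curves, while in the present radial setting the clean identity above keeps the whole admissible region \eqref{CSS-3} accessible.
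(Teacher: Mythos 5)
Your overall architecture (radial reduction, Pohozaev identity, a priori bounds by blow-up analysis, Leray--Schauder degree along a homotopy) is the same as the paper's, and your Pohozaev computation is correct: it reproduces the identity $\int_{\mathbb R^2}e^{u_1+u_2}\,dx=4\pi\bigl((\beta_1-1)(\beta_2-1)-(N_1+1)(N_2+1)\bigr)$ of Lemma \ref{lemPI} in the collapsed radial case, which is indeed the engine of the a priori estimates. However, there is one genuine gap: you never identify the model problem at $\tau=0$ nor justify that its degree is nonzero, and this is precisely the step that cannot be waved through. The degree of $I-T_\tau$ is only useful if it can actually be \emph{computed} at some endpoint, and a generic ``background plus perturbation'' homotopy gives you no access to that number (it could perfectly well be $0$). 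The paper's specific choice is to deform the vortex strength, replacing $4\pi N_i\delta_0$ by $4\pi\epsilon N_i\delta_0$, so that the $\epsilon=0$ endpoint is the $0$-vortex system; the degree there equals $-1$ only because of the uniqueness \emph{and non-degeneracy} of the radial non-topological solution established in \cite{HuangLin2013}. That external input (or an equivalent explicit computation of the endpoint degree) is indispensable and is missing from your argument. Relatedly, your remark about keeping the linearization Fredholm along the homotopy is not the relevant condition: for Leray--Schauder degree of $I-T$ with $T$ compact one needs only the uniform a priori bound along the homotopy; invertibility of the linearization is needed only at the endpoint, to evaluate the degree as a signed count there.

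A secondary, smaller issue: your description of the a priori estimate (``blow-up forces $\int e^{u_1+u_2}\to 0$'') covers only the degeneration scenario $u_{1n}\to-\infty$ on compact sets (the paper's Lemma \ref{lemdeform2}). The other failure mode — loss of the upper bound $u_i\le h_{i\epsilon}+C$ near infinity — is handled in the paper by a Kelvin transform and a rescaling to a limiting Liouville-type system, whose own Pohozaev identity $A_1A_2=2(\beta_2-1)A_1+2(\beta_1-1)A_2$ combined with the mass bounds $A_i\le 2(\beta_i+\epsilon^*N_i)$ contradicts \eqref{CSS-3}. Your sketch does not distinguish these two cases, and the second one requires the extra limiting-system argument, not just the global identity.
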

 Recently, Huang-Lin \cite{HuangLin2013} considered the existence of non-topological solutions of \eqref{CSS-6} with $N_1=N_2=0$. They showed that for any given pair $(\beta_1,\beta_2)$ with $1<\beta_i<\infty,i=1,2$ and $(\beta_1-1)(\beta_2-1)>1$, there exists a unique non-topological radial solution of \eqref{CSS-6} with $N_1=N_2=0$ such that
\begin{equation*}
u_i(x)=-2\beta_i\ln |x|+O(1), \quad i=1,2, \text{ near }\infty.
\end{equation*}
The proof of their result is based on non-degeneracy of linearized equations. Using this result, we may prove Theorem \ref{thmradial} by deforming system \eqref{CSS-6} by
\begin{equation}
\label{CSS-7}
\begin{cases}
\Delta u_1+\lambda e^{u_2}(1-e^{u_1})=4\pi \epsilon N_1\delta_{0},\\
\Delta u_2+\lambda e^{u_1}(1-e^{u_2})=\displaystyle 4\pi \epsilon N_2\delta_{0},
\end{cases}\text{ in } \mathbb R^2
\end{equation}
for $\epsilon \in[0,1]$. Suitably applying the Pohozaev's identity, we can show that for any solution $(u_1,u_2)$ of \eqref{CSS-7} for fixed $\beta_i$, $\beta_i>1$ such that \eqref{CSS-3} holds, $u_i$ is uniformly bounded in some function space, which the classical Leray-Schauder degree(see \cite{Nirenberg2001}) can be applied. Therefore, the degree for \eqref{CSS-7} is invariant under the deformation. For $\epsilon=0$, the degree is equal to $-1$ due to the result of \cite{HuangLin2013}. Thus, For \eqref{CSS-6} i.e, $\epsilon=1$, the degree is also equal to $-1$, and then the existence follows immediately. The complete proof will be given in Section 2.
\par For any configuration $\{p_{11},\cdots,p_{1N_1},p_{21},\cdots,p_{2N_2}\}$ in $\mathbb R^2$, we have the following theorem which extends Theorem A to system \eqref{CSS-1}.
\begin{theorem}\label{mainthm1}Let $p_{11},\cdots,p_{1N_1},p_{21},\cdots,p_{2N_2}$ be given. For any $(\beta_1,\beta_2)$  satisfying \eqref{CSS-3}
and
\begin{equation}
\label{CSS-4}\frac{N_1}{\beta_1+N_1}+\frac{N_2}{\beta_2+N_2}\notin\{\frac{k-1}{k}|k=2,\cdots,\max(N_1,N_2)\}.
\end{equation}
Then there exists a solution $(u_1,u_2)$ solves \eqref{CSS-1}, \eqref{CSS-2}.
\end{theorem}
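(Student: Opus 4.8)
The plan is to prove Theorem~\ref{mainthm1} by a homotopy argument that connects the prescribed configuration to the collapsed one treated in Theorem~\ref{thmradial}, and then to invoke the invariance of the Leray--Schauder degree. Concretely, for $t\in[0,1]$ I would place the vortices at $tp_{1i}$ and $tp_{2j}$, so that $t=1$ recovers \eqref{CSS-1} with the given configuration while at $t=0$ all vortices merge at the origin and the system becomes \eqref{CSS-6}. After subtracting suitable singular background functions (chosen to absorb the Dirac sources $4\pi\sum\delta_{p_{1i}}$, $4\pi\sum\delta_{p_{2j}}$ and to impose the non-topological decay $-2\beta_i\ln|x|$ of \eqref{CSS-2}), the problem is recast, in an appropriate weighted H\"older or Sobolev space $X$ encoding both the $\beta_i>1$ decay at infinity and the local behaviour at the vortices, as a fixed-point equation $u=T_t(u)$ with $T_t\colon X\to X$ compact and continuous in $t$. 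Integrating the two equations of \eqref{CSS-1} over $\mathbb R^2$ and using \eqref{CSS-2} shows that the fluxes are fixed along the whole deformation, $\int_{\mathbb R^2}\lambda e^{u_2}(1-e^{u_1})\,dx=4\pi(\beta_1+N_1)$ and $\int_{\mathbb R^2}\lambda e^{u_1}(1-e^{u_2})\,dx=4\pi(\beta_2+N_2)$; in particular the relevant masses are prescribed constants independent of $t$.

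The heart of the argument is a uniform a priori bound: every solution of $u=T_t(u)$, for all $t\in[0,1]$ and the fixed pair $(\beta_1,\beta_2)$, must lie in a ball $B_R\subset X$ whose radius is independent of $t$. I would obtain this through a bubbling analysis. Suppose a sequence $(u_1^{(n)},u_2^{(n)})$ of solutions with parameters $t_n\to t_\ast$ blows up. Applying a Pohozaev identity to the deformed systems, together with the fixed fluxes above, pins down the admissible concentration masses and rules out mass escaping to infinity, reducing the analysis to finitely many interior blow-up points. Rescaling near each such point yields in the limit an entire solution of a limiting Chern--Simons (Liouville-type) system whose total mass is \emph{quantized}: a local vortex cluster of size $k$ forces the normalized mass, measured precisely by $\frac{N_1}{\beta_1+N_1}+\frac{N_2}{\beta_2+N_2}$, to equal $\frac{k-1}{k}$. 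Condition \eqref{CSS-3} keeps the limiting entire problem in the admissible (supercritical) regime and ensures positivity of the masses, while condition \eqref{CSS-4} excludes exactly the resonant values $\frac{k-1}{k}$, $k=2,\dots,\max(N_1,N_2)$; hence no such blow-up can occur and the uniform bound follows.

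With the uniform bound in hand, $\deg(I-T_t,B_R,0)$ is well defined and, by homotopy invariance, independent of $t\in[0,1]$. At $t=0$ the system is exactly \eqref{CSS-6}; by Theorem~\ref{thmradial} a radial solution exists, and by the nondegeneracy of the linearized operator (established in Huang--Lin \cite{HuangLin2013} for $N_1=N_2=0$ and propagated to general $N_i$ through the $\epsilon$-deformation \eqref{CSS-7} and the Pohozaev bounds already used in the proof of Theorem~\ref{thmradial}) this solution is isolated with local degree $-1$, so $\deg(I-T_0,B_R,0)=-1$. Homotopy invariance then gives $\deg(I-T_1,B_R,0)=-1\neq0$, producing a solution $(u_1,u_2)$ of \eqref{CSS-1} satisfying \eqref{CSS-2} for the prescribed configuration, which completes the proof. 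The main obstacle is the second step: carrying out the blow-up analysis for the skew-symmetric, non-coercive system \eqref{CSS-1}---in particular classifying the rescaled limit profiles and matching their quantized masses to the arithmetic condition \eqref{CSS-4}---since the lack of coercivity visible in \eqref{regular3} rules out any direct variational compactness and forces the degree-theoretic route.
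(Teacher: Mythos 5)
Your strategy is the one the paper follows: deform the vortex configuration along $t\,p_{ij}$, establish uniform a priori bounds by Pohozaev identities plus bubbling analysis, and transfer the degree $-1$ from the collapsed system \eqref{CSS-6} by homotopy invariance. Two points in your sketch, however, do not survive contact with the details.

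First, the mechanism by which \eqref{CSS-4} enters is not that ``a local vortex cluster of size $k$'' carries quantized mass; in the paper's analysis the blow-up points of $(u_{1n},u_{2n})$ necessarily escape to infinity (Lemma \ref{lembl2}), and after rescaling by $r_n=|x_{1n}|^{-1}$ the concentration set $S$ is a finite set of \emph{nonzero} points. The quantization is on the \emph{number} $|S|$ of such points: one must prove simple-blow-up estimates (Lemmas \ref{lemsimple1}--\ref{lemsimple3}), that all local masses $M_q,N_q$ coincide (Lemma \ref{lemsame}), and that no mass concentrates at $0$ or at $\infty$ of the rescaled variable (Lemmas \ref{lemorigin}, \ref{leminfty}); only then do the three Pohozaev relations force $|S|=\frac{(\beta_1+N_1)(\beta_2+N_2)}{\beta_1\beta_2-N_1N_2}$ and hence $\frac{N_1}{\beta_1+N_1}+\frac{N_2}{\beta_2+N_2}=\frac{|S|-1}{|S|}$ with $2\le|S|\le\max(N_1,N_2)$. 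You correctly flag this as the main obstacle, but your description of where the resonance $\frac{k-1}{k}$ comes from would not lead you to the right set of lemmas.

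Second, your computation of $\deg(I-T_0,B_R,0)=-1$ has a genuine gap. For $N_i\ge 1$ the radial solution of \eqref{CSS-6} is not known to be unique or nondegenerate, and nondegeneracy does not ``propagate'' through the $\epsilon$-deformation \eqref{CSS-7}; what propagates is the \emph{total} degree of the radial problem in the radial subspace $\mathcal D_r\times\mathcal D_r$, which equals $-1$ because at $\epsilon=0$ one lands on the $0$-vortex system where Huang--Lin's uniqueness and nondegeneracy apply. Moreover, at $t=0$ the full (non-radial) problem may admit non-radial solutions, and one must argue that these do not alter the degree; the paper does this by invoking the symmetry result of \cite{Wang1989}. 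Asserting that the radial solution is ``isolated with local degree $-1$'' and that this alone determines $\deg(I-T_0,B_R,0)$ skips both steps.
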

It is easy to see that if we take $N_1=N_2$, $\beta_1=\beta_2$ and $\{p_{11},\cdots,p_{1N_1}\}=\{p_{21},\cdots,p_{2N_2}\}$, then we can prove $u_1=u_2$. In this case, Theorem \ref{mainthm1} is the same as Theorem A.
\par Our proof uses the same strategy as Theorem \ref{thmradial}, but the proof is more involved. The basic observation is that sometimes collapsing vortex points might not cause bubbling for system \eqref{CSS-1}. Hence, as in the proof of Theorem \ref{thmradial},  we want to establish  a priori estimates of the following deformed system,
\begin{equation}\label{CSS-5}
\begin{cases}
\Delta u_1+\lambda e^{u_2}(1-e^{u_1})=\displaystyle 4\pi\sum_{i=1}^{N_1}\delta_{\epsilon p_{1i}},\\
\Delta u_2+\lambda e^{u_1}(1-e^{u_2})=\displaystyle 4\pi\sum_{i=1}^{N_2}\delta_{\epsilon p_{2i}},
\end{cases}\text{ in } \mathbb R^2
\end{equation}
where $\epsilon\in[0,1]$. For $\epsilon=1$, it is just the same system as \eqref{CSS-1}, and $\epsilon=0$, \eqref{CSS-5} is reduced to \eqref{CSS-6}.
This a priori estimates could be obtained through the so-called ``bubbling analysis". This kind of technique began with the celebrated work of Brezis-Merle \cite{BrezisMerle1991} on the scalar nonlinear equation with exponential nonlinearity and has been developped into a powerful method through the works by Li-Shafrir\cite{LiShafrir1994}, Bartolucci-Chen-Lin-Tarantello\cite{BartolucciChenLinTarantello2004} and Chen-Lin\cite{ChenLin2002}.
For our situation, we have to extend \cite{BrezisMerle1991} to the following system:
\begin{equation}\label{s3}
\begin{cases}
\Delta u_{1n}+V_{1n}e^{u_{2n}}=0,\\
\Delta u_{2n}+V_{2n}e^{u_{1n}}=0,
\end{cases}\text{ in }\Omega\subset\subset \mathbb R^2.
\end{equation}
There is an interesting feature for bubbling solutions of \eqref{s3}: there exist a sequence of $x_{1n}\rightarrow \bar x$ such that $u_{1n}(x_{1n})\rightarrow +\infty$ iff there exist a sequence of $x_{2n}\rightarrow \bar x$ such that $u_{2n}(x_{2n})\rightarrow +\infty$. Thus, one consequence of our analysis is that both  $V_{1n}e^{u_{2n}}$ and $V_{2n}e^{u_{1n}}$ converges to $\sum_{q\in S}M_q\delta_q$ and $\sum_{q\in S}N_q\delta_q$ in measure, which implies the concentration phenomenon also occurs for system \eqref{CSS-1}.
\par The paper is organized as follows. In Section 2, we will calculate the Leray-Schauder degree of the radially symmetric solutions for \eqref{CSS-7} and prove Theorem \ref{thmradial}. We will generalize Brezis-Merle's alternative for PDE system \eqref{s3}  in Section 3. In Section 4, we will establish a Pohozaev's identity and then obtain the a priori estimates for \eqref{CSS-5} by bubbling analysis. Theorem \ref{mainthm1} will be proved in Section 5 by Leray-Schauder degree theory.

\section{Radially symmetric solutions}

In this section, we will prove Theorem \ref{thmradial} by establishing a priori estimates of the radial solutions and using Leray-Schauder degree theory.
We consider the following system:
\begin{equation}
\label{CSSS-7}
\begin{cases}
\Delta u_1+e^{u_2}(1-e^{u_1})=4\pi \epsilon N_1\delta_{0},\\
\Delta u_2+ e^{u_1}(1-e^{u_2})=\displaystyle 4\pi \epsilon N_2\delta_{0},
\end{cases}\text{ in } \mathbb R^2
\end{equation}
for $\epsilon \in[0,1]$ where
\begin{equation}\label{CSSS-2}
u_i(x)=-2\beta_i\ln |x|+O(1), \quad\beta_i>1,\quad i=1,2, \text{ near }\infty,
\end{equation}
and
\begin{equation}\label{CSSS-3}
(\beta_1-1)(\beta_2-1)>(N_1+1)(N_2+1).
\end{equation}

We begin our proof with the following Lemma.
\begin{lemma}
\label{lemdeform1}
Suppose $(u_1(r),u_2(r))$ solves  \eqref{CSSS-7}, \eqref{CSSS-2}, \eqref{CSSS-3}. Then
\begin{equation}
\label{deform3}
|r u_{i}'(r)|\leq C,\quad \text{for } r>0
\end{equation}
and
\begin{equation}
\label{deform4}
u_{i}(r)\leq -2\ln r+C,\quad \text{for } r\geq 1
\end{equation}
where $C$ is a constant depending only on $\beta_i,N_i$.
\end{lemma}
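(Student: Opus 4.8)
The plan is to use the radial symmetry to reduce \eqref{CSSS-7} to an ODE and then run a maximum-principle argument. Writing $\sigma_i(r)=ru_i'(r)$, the divergence theorem gives $2\pi\sigma_i(r)=\int_{B_r}\Delta u_i=4\pi\epsilon N_i-\int_{B_r}e^{u_{3-i}}(1-e^{u_i})$, so that $\sigma_i(0^+)=2\epsilon N_i$ and $\sigma_i'(r)=-r\,e^{u_{3-i}}(1-e^{u_i})$ for $r>0$. Both estimates hinge on the sign of $1-e^{u_i}$, so the key preliminary step is to show $u_i\le 0$ everywhere.

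For that I would argue by contradiction: suppose $\sup_r u_i=M_i>0$. Since $u_i\to-\infty$ at infinity by \eqref{CSSS-2} and, when $\epsilon N_i>0$, also at the origin, the supremum is attained at some $r_\ast\in[0,\infty)$; at such a maximum $\Delta u_i(r_\ast)\le 0$, whereas the equation gives $\Delta u_i(r_\ast)=-e^{u_{3-i}(r_\ast)}(1-e^{M_i})>0$ because $M_i>0$, a contradiction. Hence $u_i\le 0$, so $1-e^{u_i}\ge 0$ and $\sigma_i$ is nonincreasing. A nonincreasing $\sigma_i$ has a limit $L$ at infinity; comparing $u_i(r)=u_i(1)+\int_1^r \sigma_i(s)\,s^{-1}\,ds$ with \eqref{CSSS-2} (an averaging argument forces $u_i(r)/\ln r\to L$) identifies $L=-2\beta_i$. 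Therefore $-2\beta_i\le\sigma_i(r)\le\sigma_i(0^+)=2\epsilon N_i\le 2N_i$, which is exactly \eqref{deform3} with $C=\max(2\beta_i,2N_i)$.

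To obtain \eqref{deform4} I would integrate once more. Since $\tfrac{d}{dr}\bigl(u_i(r)+2\ln r\bigr)=(\sigma_i(r)+2)/r$ and $\sigma_i$ decreases from $2\epsilon N_i$ to $-2\beta_i<-2$, the function $u_i+2\ln r$ increases until $\sigma_i$ crosses $-2$ at some radius $R$ and decreases afterwards; using $u_i(R)\le 0$ one gets $u_i(r)\le -2\ln r+2\ln R$ for $r\ge R$, and the bound holds trivially for $1\le r\le R$ as well. The whole point is thus to bound the crossing radius $R$ uniformly — equivalently, to show that the flux $\int_{B_r}e^{u_{3-i}}(1-e^{u_i})$ captures all but $4\pi(\beta_i-1)$ of its total mass $4\pi(\beta_i+\epsilon N_i)$ within a radius depending only on $\beta_i,N_i$.

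This last point is the main obstacle. The elementary monotonicity above leaves $R$ — equivalently the constant hidden in the $O(1)$ of \eqref{CSSS-2} — a priori solution-dependent, and the naive bound $\sigma_i\le 2N_i$ yields only the useless $\int_1^R(\sigma_i+2)s^{-1}\,ds\le 2(N_i+1)\ln R$. Closing it uniformly requires ruling out the solution ``spreading out,'' which is precisely where the coupling of the two equations and the strict inequality \eqref{CSSS-3} must enter: a uniform lower bound for $u_{3-i}$ on bounded sets would force $e^{u_{3-i}}(1-e^{u_i})$ to be bounded below there, making the flux accumulate within a controlled radius. I would therefore expect the argument to produce such a lower bound by a companion comparison/maximum-principle estimate on the partner component, thereby pinning $R$ and fixing the constant $C$ in \eqref{deform4} in terms of $\beta_i,N_i$ alone.
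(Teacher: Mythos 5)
Your derivation of \eqref{deform3} is essentially the paper's: integrate the radial ODE to get $ru_i'(r)-2\epsilon N_i=-\int_0^r e^{u_{3-i}}(1-e^{u_i})s\,ds$, use $u_i\le 0$ to see the integrand is nonnegative, and identify the total flux as $2(\beta_i+\epsilon N_i)$ to get $-2\beta_i\le ru_i'(r)\le 2\epsilon N_i$. That part is fine.

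For \eqref{deform4}, however, your proposal has a genuine gap, which you yourself flag: the crossing-radius argument leaves $R$ (equivalently the $O(1)$ in \eqref{CSSS-2}) solution-dependent, and your suggested fix --- a uniform lower bound for the partner component on bounded sets --- is not what closes it (indeed such a lower bound cannot hold uniformly: Lemma \ref{lemdeform2} is proved \emph{after} this lemma, and in the later bubbling analysis $u_i$ does go to $-\infty$ on compact sets). The missing ingredient is that the Pohozaev identities of Lemma \ref{lemPI} bound not only the fluxes $\int e^{u_{3-i}}(1-e^{u_i})$ but also $\int_{\mathbb R^2}e^{u_i}\,dx$ itself by a constant depending only on $\beta_i,N_i$ (in the radial case all vortices sit at the origin, so the gradient correction terms vanish and $\int_{\mathbb R^2}e^{u_1}\,dx=4\pi(\beta_1\beta_2-\epsilon^2N_1N_2-\beta_1-\epsilon N_1)\le 4\pi\beta_1\beta_2$). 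Once you have $\int_r^\infty se^{u_i(s)}\,ds\le C'$ uniformly, \eqref{deform3} gives the Harnack-type estimate $u_i(s)\ge u_i(r)-C\ln(s/r)$ for $s\ge r\ge 1$, hence
\begin{equation*}
C'\ge \int_r^\infty s e^{u_i(r)}\left(\frac rs\right)^{C}ds=r^2e^{u_i(r)}\int_1^\infty t^{1-C}\,dt\ge c\,r^2e^{u_i(r)},
\end{equation*}
(the integral converges since $C\ge 2\beta_i>2$), which is exactly \eqref{deform4}. No control of the crossing radius or of $u_{3-i}$ from below is needed; the uniform $L^1$ bound on $e^{u_i}$ does all the work.
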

\begin{proof}
In fact, by directly integrating, we will have
$$ru_1'(r)-2\epsilon N_1=-\int_0^r e^{u_2}(1-e^{u_1}) s ds,\quad ru_2'(r)-2\epsilon N_2=-\int_0^r e^{u_1}(1-e^{u_2}) s ds.$$
\eqref{deform3} follows easily from these two equalities and Pohozaev's identity in Lemma \ref{lemPI}. For $s\geq r\geq 1$, from \eqref{deform3}, one gets
\begin{equation*}
u_i(s)=u_i(r)+\int_r^s u'_i(\lambda )d\lambda\geq u_i(r)-C\ln \frac sr.
\end{equation*}
By Lemma \ref{lemPI}, we have
\begin{equation*}
C'\geq \int_{r}^\infty s e^{u_i(s)}ds\geq \int_r^\infty s e^{u_i(r)} \left(\frac rs\right)^{C} ds=r^2e^{u_i(r)}\int_1^\infty t^{1-C}dt\geq cr^2e^{u_i(r)}.
\end{equation*}
This proves \eqref{deform4}.
\end{proof}
\begin{lemma}\label{lemdeform2}
Suppose $(u_1(r),u_2(r))$ solves  \eqref{CSSS-7}, \eqref{CSSS-2}, \eqref{CSSS-3}. Then
$$|u_1|_{L^\infty(K)}+|u_2|_{L^\infty(K)}\leq C_K,\quad  \forall K\subset\subset \mathbb R^2\backslash\{0\}.$$
\end{lemma}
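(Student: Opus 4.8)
The plan is to combine the one-dimensional gradient bound of Lemma~\ref{lemdeform1} with the total-mass identities coming from integrating \eqref{CSSS-7}, so that the whole estimate is reduced to a single pointwise lower bound. Fix $K\subset\subset\mathbb R^2\setminus\{0\}$ and choose $0<r_1<r_2<\infty$ with $r_2\ge 1$ and $K\subset\{r_1\le|x|\le r_2\}$. Since $K$ avoids the origin, the Dirac terms in \eqref{CSSS-7} do not act there and $(u_1,u_2)$ solves the smooth radial system on $\{r_1/2\le r\le 2r_2\}$. From \eqref{deform3} one has $|u_i'(r)|\le C/r$, hence the oscillation bound $|u_i(r)-u_i(s)|\le C|\ln(r/s)|$ for $r,s\in[r_1,r_2]$. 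Consequently $|u_i(r)|\le |u_i(1)|+C|\ln r|\le |u_i(1)|+C_K$ on $[r_1,r_2]$, so it suffices to bound $u_i$ at the single radius $r=1$ from both sides. The upper bound $u_i(1)\le C$ is immediate from \eqref{deform4}; the entire difficulty is the lower bound $u_i(1)\ge -C$.

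To produce this lower bound I would first record two sign facts. A maximum-principle argument gives $u_i\le 0$ on $(0,\infty)$: since $u_i(r)\to-\infty$ as $r\to\infty$ and $u_i$ is bounded above as $r\to 0$, $u_i$ attains its maximum at some $r_i^{\max}\in[0,\infty)$, and at that point the radial Laplacian is nonpositive, so the equation forces $1-e^{u_i}\ge 0$, i.e. $u_i\le 0$ everywhere. Next, integrating \eqref{CSSS-7} and using $ru_i'(r)\to-2\beta_i$ gives the flux identities $\int_0^\infty e^{u_{3-i}}(1-e^{u_i})\,s\,ds=2(\beta_i+\epsilon N_i)$; combined with $1-e^{u_i}\le 1$ these yield the lower mass bounds $\int_0^\infty s\,e^{u_i}\,ds>2$ for $i=1,2$. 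On the other hand, the Pohozaev identity of Lemma~\ref{lemPI} supplies the matching upper bound $\int_0^\infty s\,e^{u_i}\,ds\le C$. Thus $2<\int_0^\infty s\,e^{u_i}\,ds\le C$ uniformly.

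Now I would exploit that, since $ru_i'(r)=2\epsilon N_i-\int_0^r e^{u_{3-i}}(1-e^{u_i})s\,ds$ is monotone decreasing, each $u_i$ is unimodal with a single maximum $m_i=u_i(r_i^{\max})\le 0$. Integrating the gradient bound around $r_i^{\max}$ gives $u_i(s)\ge m_i-C$ for $s\in[r_i^{\max}/2,r_i^{\max}]$, whence the near-peak contribution forces $c\,e^{m_i}(r_i^{\max})^2\le\int_0^\infty s\,e^{u_i}\,ds\le C$. The goal is to convert the lower mass bound $\int_0^\infty s\,e^{u_i}\,ds>2$ into a lower bound for $m_i$ together with a bound $r_i^{\max}\le C$ on the location of the peak; once $m_i\ge-C$ and $r_i^{\max}$ is controlled, the oscillation bound propagates the value at $r_i^{\max}$ to $r=1$ and gives $u_i(1)\ge -C$, completing the proof. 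To get there I would split $\int_0^\infty s\,e^{u_i}\,ds$ at $r_i^{\max}$, estimate the inner part by $e^{m_i}(r_i^{\max})^2$ and the outer tail by the Pohozaev bound $\int_r^\infty s\,e^{u_i}\,ds\le C$ of Lemma~\ref{lemPI}, thereby trapping $e^{m_i}(r_i^{\max})^2$ both above and below and ruling out that the mass escapes to $r=0$ or $r=\infty$.

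The main obstacle is exactly this last point: producing one honest lower bound (a \emph{seed}), since every propagation tool available---the oscillation bound of Lemma~\ref{lemdeform1}, Harnack for the associated superharmonic functions, and unimodality---only transports a bound from one radius to another and never manufactures an absolute lower bound by itself. The danger to exclude is that $m_i\to-\infty$ while the peak location $r_i^{\max}\to\infty$ (or $\to 0$) in such a way that $e^{m_i}(r_i^{\max})^2$ stays bounded below, i.e. a bubble carrying the mass escapes to infinity. Note that the upper bound \eqref{deform4} alone is too weak for this, as it only encodes the borderline decay $-2\ln r$ and makes $\int^\infty s\,e^{u_i}\,ds$ logarithmically divergent; it is precisely the finiteness of the total mass furnished by the Pohozaev identity of Lemma~\ref{lemPI} that pins the peak to a bounded annulus and forces $m_i\ge-C$. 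I therefore expect the Pohozaev identity to be the essential ingredient, with the remaining steps being the routine oscillation and maximum-principle arguments sketched above.
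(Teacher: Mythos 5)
Your reduction is sound as far as it goes: the oscillation bound from \eqref{deform3} does reduce everything to a single seed lower bound at one radius, and you have correctly located where the real difficulty sits. But the proposal never closes the gap you yourself flag. The mechanism you offer --- trapping $e^{m_i}(r_i^{\max})^2$ between two positive constants using the mass bounds $2<\int_0^\infty s\,e^{u_i}\,ds\le C$ --- cannot ``pin the peak to a bounded annulus'': a profile whose peak sits at $r_i^{\max}=R_n\to\infty$ with height $e^{m_i}\sim R_n^{-2}$ carries total mass of order one near the peak and is consistent with every estimate you invoke (the flux identities, the single-component Pohozaev mass bounds, unimodality, the oscillation bound, and \eqref{deform4}, which is \emph{saturated} by exactly this profile). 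So the escape-to-infinity scenario is not excluded, $m_i\to-\infty$ remains possible, and the seed is never produced. A telltale sign that something essential is missing is that your argument nowhere uses the hypothesis \eqref{CSSS-3}, which must enter somewhere.

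The paper supplies the seed by a different and much shorter device, and this is the idea you are missing: Lemma \ref{lemPI} gives a lower bound on the \emph{cross} mass, $\int_{\mathbb R^2}e^{u_{1n}+u_{2n}}\,dx=4\pi\bigl((\beta_1-1)(\beta_2-1)-(\epsilon_nN_1+1)(\epsilon_nN_2+1)\bigr)\ge c_0>0$ --- this is precisely where \eqref{CSSS-3} is used --- while the same lemma gives $\int_{\mathbb R^2}e^{u_{2n}}\,dx\le C$. Since $\int e^{u_{1n}+u_{2n}}\le\bigl(\sup_{\mathbb R^2}e^{u_{1n}}\bigr)\int e^{u_{2n}}$, this forces $\sup_{\mathbb R^2}u_{1n}\ge-C$ outright, with no need to locate the peak at all. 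The contradiction is then run in the opposite direction: if $u_{1n}(r^*)\to-\infty$ at some $r^*>0$, then the oscillation bound, the uniform decay \eqref{deform4} at infinity, and the monotonicity of $r\mapsto ru_{1n}'(r)$ (which controls the rise toward the origin, using $e^{u_{2n}}\le1$) give $\sup_{\mathbb R^2}u_{1n}\to-\infty$. If you replace your peak-trapping step by this cross-term inequality, the rest of your outline goes through.
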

\begin{proof}
If not, one may assume that there exists a sequence of positive numbers $r_n\rightarrow r^*>0$ and $\epsilon_n\rightarrow \epsilon^*$ such that $u_{1n}(r^*)\rightarrow -\infty$ as $n\rightarrow \infty$.
From \eqref{deform4}, one gets
$u_{1n}(r)\rightarrow -\infty$ as $r\rightarrow +\infty$ uniformly for all $n$. Hence, we have $\max_{\mathbb R^2}u_{1n}\rightarrow -\infty$ as $n\rightarrow \infty$. By Pohozaev's identity in Lemma \ref{lemPI}, we have
\begin{equation*}
o(1)=\max_{\mathbb R^2}e^{u_{1n}}\int_{\mathbb R^2}e^{u_{2n}}dx\geq \int_{\mathbb R^2} e^{u_{1n}+u_{2n}}dx=4\pi((\beta_1-1)(\beta_2-1)-(\epsilon_nN_1+1)(\epsilon_nN_2+1))\geq c_0>0.
\end{equation*}
This yields a contradiction.
\end{proof}

Set $h_{i\epsilon}=2\epsilon N_i\ln r-(\beta_i+\epsilon N_i)\ln(1+r^2), $ $i=1,2$.
\begin{lemma}\label{thmdeform1}
Suppose $(u_{1}(r),u_2(r))$ solves \eqref{CSSS-7}, \eqref{CSSS-2}, \eqref{CSSS-3}.
Then $$|u_1-h_{1\epsilon}|_{L^\infty(\mathbb R^2)}+|u_2-h_{2\epsilon}|_{L^\infty(\mathbb R^2)}\leq C.$$
\end{lemma}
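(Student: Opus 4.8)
The idea is that $h_{i\epsilon}$ is built precisely to reproduce both the logarithmic singularity of $u_i$ at the origin and its logarithmic growth at infinity, so that $w_i:=u_i-h_{i\epsilon}$ is a bounded remainder. First I would compute $\Delta h_{i\epsilon}$. Using $\Delta(2\epsilon N_i\ln r)=4\pi\epsilon N_i\delta_0$ and $\Delta\ln(1+r^2)=4/(1+r^2)^2$, one finds $\Delta h_{i\epsilon}=4\pi\epsilon N_i\delta_0-4(\beta_i+\epsilon N_i)/(1+r^2)^2$, so subtracting it from \eqref{CSSS-7} cancels the Dirac mass and leaves a Poisson equation with smooth right-hand side,
$$\Delta w_1=-e^{u_2}(1-e^{u_1})+\frac{4(\beta_1+\epsilon N_1)}{(1+r^2)^2},$$
and $w_i$ is continuous at $r=0$.

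Second, I would record the flux identities. Integrating the radial form of \eqref{CSSS-7} and letting $r\to\infty$, the asymptotics \eqref{CSSS-2} force $ru_i'(r)\to-2\beta_i$, whence $\int_0^\infty s\,e^{u_2}(1-e^{u_1})\,ds=2(\beta_1+\epsilon N_1)$ and the companion identity for the other component; by Lemma \ref{lemPI} the integrals $\int_0^\infty s\,e^{u_1+u_2}\,ds$, and hence $\int_0^\infty s\,e^{u_i}\,ds$, are bounded by a constant depending only on $\beta_i,N_i$. These identities are exactly the cancellation encoded in $h_{i\epsilon}$: one integration of the equation for $w_1$ gives
$$rw_1'(r)=\int_r^\infty s\,e^{u_2}(1-e^{u_1})\,ds-\frac{2(\beta_1+\epsilon N_1)}{1+r^2},$$
which is uniformly bounded and tends to $0$ both as $r\to0$ and as $r\to\infty$; the analogue holds for $w_2$.

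To convert the bound on $rw_i'$ into a bound on $w_i$ I must integrate $w_i'=(rw_i')/r$ on $(0,\infty)$ and match at a fixed radius $r_0$, where $w_i(r_0)$ is controlled by Lemma \ref{lemdeform2}. Near the origin the singularity is absorbed by $h_{i\epsilon}$ and $u_i$ is bounded above, so $|rw_i'(r)|=O(r^2)$ and $\int_0|w_i'|\,dr$ converges with a uniform constant. The difficulty is at infinity: integrating $\int_r^\infty s\,e^{u_2}(1-e^{u_1})\,ds$ against $dr/r$ produces, by Fubini, the weighted integral $\int_1^\infty s\,e^{u_2}\ln s\,ds$, and the crude decay $e^{u_i}\le Cr^{-2}$ of Lemma \ref{lemdeform1} makes this logarithmically divergent. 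Hence the heart of the matter is to upgrade the decay to the sharp uniform rate $e^{u_i(r)}\le Cr^{-2\beta_i}$ for $r\ge1$.

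I expect this decay to be the main obstacle, and I would obtain it by a bootstrap. For $r$ beyond a uniform radius $R_0$ (fixed by $e^{u_1}\le Cr^{-2}\le\tfrac12$) one has $1-e^{u_1}\in[\tfrac12,1]$, so $ru_i'(r)=-2\beta_i+T_i(r)$ with $0\le T_i(r)\le\int_r^\infty s\,e^{u_j}\,ds$, $j\ne i$. If $e^{u_j}\le Cr^{-2a}$ with $a>1$, then $T_i(r)\le Cr^{2-2a}$, and integrating $u_i'=-2\beta_i/r+T_i/r$ from $r_0$ gives $u_i(r)\le-2\beta_i\ln r+C$ together with the matching lower bound $u_i(r)\ge-2\beta_i\ln r-C$, i.e.\ the sharp rate for $e^{u_i}$. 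Thus a single improvement step closes the scheme at the exponent $\beta_i$; the genuinely delicate point is the very first improvement, from the threshold exponent $1$ to some $1+\delta$ uniformly in the solution and in $\epsilon$, where the coupling between the two equations and the uniform bound on $\int_0^\infty s\,e^{u_1+u_2}\,ds$ must be exploited to rule out mass escaping to infinity at the critical rate. Once that is secured, both integrations converge with constants depending only on $\beta_i,N_i$, and the asserted $L^\infty$ bound follows.
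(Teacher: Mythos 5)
Your overall plan (subtract $h_{i\epsilon}$, control the remainder near $0$ by the sign of $u_i$ and Lemma \ref{lemdeform2}, and reduce the bound at infinity to the sharp decay $e^{u_i(r)}\le Cr^{-2\beta_i}$) is a reasonable reformulation of the statement, and your diagnosis of where the difficulty sits is exactly right. But the proposal stops precisely at the step that constitutes the actual content of the lemma. Your bootstrap runs only once you have $e^{u_j}\le Cr^{-2a}$ with $a>1$ \emph{and uniform constants}; equivalently, once you know there is a radius $R$, depending only on $\beta_i,N_i$, beyond which the tail $\int_R^\infty s\,e^{u_j}\,ds$ is small (say $\le\beta_i-1$), so that $ru_i'(r)=-2\beta_i+\int_r^\infty s\,e^{u_j}(1-e^{u_i})\,ds$ drops strictly below $-2$. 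For a single solution such an $R$ exists trivially; the whole point of the a priori estimate is that $R$ must be uniform over the family of solutions and over $\epsilon\in[0,1]$, i.e.\ one must exclude a sequence of solutions whose flux concentrates at infinity at the critical rate. You flag this as ``the genuinely delicate point'' and then assume it is ``secured'' — that is a genuine gap, not a routine verification. A telltale sign is that the structural hypothesis \eqref{CSSS-3}, $(\beta_1-1)(\beta_2-1)>(N_1+1)(N_2+1)$, never actually gets used in your argument; without it the conclusion is expected to fail, so any correct proof must invoke it at the critical step.

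The paper closes exactly this gap by a different mechanism: it performs the Kelvin transform $\xi_i(x)=u_i(x/|x|^2)-2\beta_i\ln|x|$, which converts ``concentration at infinity'' into a possible interior blow-up of $\xi_i$ at the origin of the unit ball. If $\max\xi_{1n}\to+\infty$, rescaling by $s_n=e^{-\lambda_{1n}/(2\beta_1-2)}$ produces a limiting Liouville-type system $-\Delta\bar\xi_1=c_0|x|^{2\beta_2-4}e^{\bar\xi_2}$, $-\Delta\bar\xi_2=|x|^{2\beta_1-4}e^{\bar\xi_1}$ with finite masses $A_1,A_2$; the Pohozaev identity for that system gives $A_1A_2=2(\beta_2-1)A_1+2(\beta_1-1)A_2$, and the flux bounds $A_i\le 2(\beta_i+\epsilon^*N_i)$ then force $\beta_1\beta_2-\beta_1-\beta_2\le\epsilon^*(\epsilon^*N_1N_2+N_1+N_2)$, contradicting \eqref{CSSS-3}. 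That contradiction is what rules out mass escaping to infinity and yields the uniform upper bound; Harnack then gives the lower bound. To make your route work you would have to supply an argument of comparable strength for the first improvement from exponent $1$ to $1+\delta$, and it would inevitably have to use \eqref{CSSS-3} in essentially the same way.
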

\begin{proof}
Let $v_{1}, v_{2}$ be defined as $v_i=u_i-h_{i\epsilon}$, $i=1,2$.
By a direct computation, $v_{i}$ should satisfy
\begin{equation}\label{radial1}
\begin{cases}
\displaystyle \Delta v_1 +e^{u_2}(1-e^{u_1})=\frac{4(\epsilon N_1+\beta_1)}{(1+|x|^2)^2},\\
\displaystyle \Delta v_2 +e^{u_1}(1-e^{u_2})=\frac{4(\epsilon N_2+\beta_2)}{(1+|x|^2)^2},
\end{cases}\quad\text{in }\mathbb R^2.
\end{equation}
Since $u_1,u_2<0$ and $u_1,u_2$ are uniformly bounded in $L^\infty(\partial B_1)$, we can take $\eta(x)=C|x|^2$ as a barrier function for some suitable large constant $C$. This proves that $v_i\in L^\infty(B_1)$, $i=1,2.$ In order to show $v_i\in L^\infty(|x|\geq 1)$, one should take Kelvin transformation as follows:
$$\xi_i(x)=u_i(x/|x|^2)-2\beta_i\ln |x|,\quad |x|\leq 1,\quad i=1,2.$$
Then $\xi_i$ satisfies
\begin{equation*}
\begin{split}
\Delta \xi_{1}+|x|^{2\beta_2-4}e^{\xi_{2}}(1-|x|^{2\beta_1}e^{\xi_{1}})=0,\\
\Delta \xi_{2}+|x|^{2\beta_1-4}e^{\xi_{1}}(1-|x|^{2\beta_2}e^{\xi_{2}})=0,
\end{split}\quad\quad\text{ in } |x|\leq 1.
\end{equation*}
We need to show $\xi_i$ is bounded from above. If not, we may assume $\max_{B_1}\xi_{1n}=\xi_{1n}(r_n)\rightarrow +\infty$, $r_n\rightarrow 0,\epsilon_n\rightarrow \epsilon^*$ as $n\rightarrow \infty$.    Set
$$s_n=\min\{e^{-\frac{\lambda_{1n}}{2\beta_1-2}},e^{-\frac{\lambda_{2n}}{2\beta_2-2}}\}=e^{-\frac{\lambda_{1n}}{2\beta_1-2}}\rightarrow 0,$$
where $\lambda_{in}=\max_{B_1}\xi_{in}$, $i=1,2$.
 In fact,  By \eqref{deform4}, we will have $\xi_{1n}(r_n)+2(\beta_1-1)\ln r_n\leq C$ which means $\frac{r_n}{s_n}\leq C$.

Set
$$\bar \xi_{in}(x)=\xi_{in}(s_n  x)-\lambda_{in},i=1,2.$$
Then we have $\bar \xi_{in}\leq 0,i=1,2,$ and
\begin{equation*}
\begin{cases}
\displaystyle -\Delta \bar \xi_{1n}=e^{\lambda_{2n}+2(\beta_2-1)\ln s_n}|x|^{2\beta_2-4}e^{\bar \xi_{2n}}(1-r_n^2s_n^2|x|^{2\beta_1}e^{\bar \xi_{1n}}),\\
\displaystyle-\Delta \bar \xi_{2n}=|x|^{2\beta_1-4}e^{\bar \xi_{1n}}(1-r_n^2s_n^{2\beta_2}e^{\lambda_{2n}}|x|^{2\beta_2}e^{\bar \xi_{2n}}),
\end{cases}\text{ in } |x|\leq 1/s_n.
\end{equation*}Also we have $\bar \xi_{1n}(x)\leq \bar\xi_{1n}(\frac{r_{n}}{s_n})=0$. Then by standard elliptic estimates and $\lambda_{2n}+2(\beta_2-1)\ln s_n\leq 0$, we have $\bar \xi_{1n}\rightarrow \bar\xi_1$ in $W^{2,\gamma}_{loc}(\mathbb R^2)$ for some $\gamma>1$. As for $\bar \xi_{2n}$, by Harnack inequality, one gets either $\bar\xi_{2n}\rightarrow -\infty$ locally uniformly in $\mathbb R^2$ or $\bar\xi_{2n}\rightarrow \bar \xi_2$ in $W^{2,\gamma}_{loc}(\mathbb R^2)$. By $\int_{\mathbb R^2}|x|^{2\beta_1-4}e^{\bar \xi_1}dx<+\infty$, we can exclude the previous case. In fact, by the same arguments, we also have $\lambda_{2n}+2(\beta_2-1)\ln s_n\geq -C$, otherwise,
\begin{equation*}
\Delta \bar \xi_1=0,\bar \xi_1(x_0)=0, x_0=\lim_{n\rightarrow \infty}\frac{r_{n}}{s_n}\Rightarrow \bar \xi_1\equiv 0
\end{equation*}which contradicts to $\int_{\mathbb R^2}|x|^{2\beta_1-4}e^{\bar \xi_1}dx<+\infty$ by Fatou's lemma. And $\bar \xi_1,\bar \xi_2$ satisfy
\begin{equation*}
\begin{cases}
&-\Delta \bar\xi_1=c_0|x|^{2\beta_2-4}e^{\bar \xi_2},\\
&-\Delta \bar \xi_2=|x|^{2\beta_1-4}e^{\bar \xi_1},
\end{cases}\text{ in }\mathbb R^2
\end{equation*}with $|x|^{2\beta_1-4}e^{\bar\xi_1},|x|^{2\beta_2-4}e^{\bar\xi_2}\in L^1(\mathbb R^2)$ for some constant $0<c_0\leq 1$. Set
\begin{equation*}
A_1=\frac1{2\pi}\int_{\mathbb R^2}c_0|x|^{2\beta_2-4}e^{\bar \xi_2}dx,A_2=\frac1{2\pi}\int_{\mathbb R^2}|x|^{2\beta_1-4}e^{\bar \xi_1}dx.
\end{equation*}By Pohozaev's identity and repeating the arguments in Lemma \ref{lemPI}, one gets
$$A_1A_2=2(\beta_2-1)A_1+2(\beta_1-1)A_2.$$Noting $2(\beta_1+\epsilon^*N_1)\geq A_1,2(\beta_2+\epsilon^*N_2)\geq A_2$, we have

\begin{equation*}
\frac{\beta_1-1}{\beta_1+\epsilon^*N_1}+\frac{\beta_2-1}{\beta_2+\epsilon^*N_2}\leq 1, \quad \text{ i.e., } \beta_1\beta_2-\beta_1-\beta_2\leq \epsilon^*(\epsilon^*N_1N_2+N_1+N_2)
\end{equation*}
which contradicts to \eqref{CSSS-3}. This proves the upper bound of $\xi_i$.  By Harnack inequality and $\xi_i\in L^\infty(\partial B_1)$, we get $\xi_i$ is bounded in $L^\infty(B_1)$. This proves Lemma \ref{thmdeform1}.
\end{proof}
By Lemma \ref{thmdeform1}, we now can calculate the correponding Leray-Schauder degree of \eqref{radial1}.
Introduce the following Hilbert space for $\beta=\min(\beta_1,\beta_2,2)>1$
$$\mathcal{D}=\{v:\mathbb R^2\rightarrow \mathbb R\ |\   |v|_{\mathcal{D}}^2=\int_{\mathbb R^2}|\nabla v|^2 dx+\int_{\mathbb R^2}\frac{v^2}{(1+|x|^2)^{\beta}}dx<+\infty\}.$$
We denote the radial subspace of  $\mathcal D $ by $\mathcal D_r$ and $\mathcal D_r^2=\mathcal D_r\times \mathcal D_r$. we define a map:
 $$G(\epsilon,v_1,v_2)=(G_1(\epsilon,v_1,v_2),G_2(\epsilon,v_1,v_2)): \mathcal D_r^2\rightarrow \mathcal D_r^2$$
for $t\in [0,1]$ by
\begin{equation}\label{degreemap1}
\begin{split}
& G_{1}(\epsilon,v_1,v_2)=(-\Delta+\sigma)^{-1}[e^{v_2+h_{2\epsilon}}(1-e^{v_1+h_{1\epsilon}})+\sigma v_1-g_{1\epsilon}],\\
& G_{2}(\epsilon,v_1,v_2)=(-\Delta+\sigma)^{-1}[e^{v_1+h_{1\epsilon}}(1-e^{v_2+h_{2\epsilon}})+\sigma v_2-g_{2\epsilon}],
\end{split}
\end{equation}where $\sigma=\frac {1}{(1+|x|^2)^\beta}$ and $g_{i\epsilon}=\frac{4(\epsilon N_i+\beta_i)}{(1+|x|^2)^2}$, $i=1,2$. For $\epsilon\in [0,1]$, $G(\epsilon,\cdot,\cdot)$ is a continuous compact operator from $\mathcal D_r^2$ to $\mathcal D_r^2$.
\par \textbf{The proof for Theorem \ref{thmradial}:}
\par
Under the assumptions of Lemma \ref{thmdeform1}, we have $|v_1|_{\mathcal D_r}+|v_2|_{\mathcal D_r}\leq C$ uniformly with respect to $\epsilon\in[0,1]$.
One can use Lemma \ref{thmdeform1} and integrate by parts to get it. We omit the details here.  Therefore, we can choose a number $R>0$ independent of $\epsilon$ such that if $I-G(\epsilon,v_1,v_2)=0$ then $|v_1|_{\mathcal D_r}+|v_2|_{\mathcal D_r}<R$. Set $\Omega_R=\{(v_1,v_2)\in\mathcal D_r\times \mathcal D_r\big{|}|v_1|_{\mathcal D_r}+|v_2|_{\mathcal D_r}< R\}$. Therefore the degree $\deg(I-G(\epsilon,\cdot,\cdot),\Omega_R,0)$ in $\mathcal D_r\times\mathcal D_r$ is well defined for $\epsilon\in[0,1]$. Also, $I-G(\epsilon,\cdot,\cdot)$ defines a good homotopy.
We have $\deg(I-G(1,v_1,v_2),\Omega_R,0)=\deg(I-G(0,v_1,v_2),\Omega_R,0)$. But $I-G(0,v_1,v_2)=0$ is equivalent to
\begin{equation*}
\begin{cases}
\Delta v_1+e^{v_2}(1-e^{v_1})=0,\\
\Delta v_2+e^{v_1}(1-e^{v_2})=0,
\end{cases}\quad \text{in }\mathbb R^2.
\end{equation*}
It is already known that for radially symmetric solutions of $I-G(0,v_1,v_2)=0$, one has $\deg(I-G(0,v_1,v_2),\Omega_R,0)=-1$, see \cite{HuangLin2013}. This proves Theorem \ref{thmradial}.

\section{Generalization of Brezis-Merle's alternative}
Before proving Theorem \ref{mainthm1}, we will first generalize the Brezis-Merle's alternative for PDE systems as follows in this section .
Consider the following system:
\begin{equation}\label{BM-1}
\begin{cases}
\Delta u_{1n}+V_{1n}e^{u_{2n}}=0,\\
\Delta u_{2n}+V_{2n}e^{u_{1n}}=0,
\end{cases}\text{ in }\Omega\subset\subset \mathbb R^2.
\end{equation}
\begin{lemma}\label{lemBM1}
Assume $(u_{1n},u_{2n})$ is a sequence of solutions of \eqref{BM-1} satisfying
\begin{eqnarray*}
&&|V_{1n}|_{L^{\infty}(\Omega)}+|V_{2n}|_{L^{\infty}(\Omega)}\leq C_1,|e^{u_{1n}}|_{L^{1}(\Omega)}+|e^{u_{2n}}|_{L^{1}(\Omega)}\leq C_2,\\
&& \text{either }\int_{\Omega}|V_{1n}|e^{u_{2n}}dx\leq \epsilon_1<{4\pi},\text{ or }\int_{\Omega}|V_{2n}|e^{u_{1n}}dx\leq \epsilon_2<{4\pi}.
\end{eqnarray*}Then $u^+_{1n},u^+_{2n}$ are uniformly bounded in $L^\infty_{loc}(\Omega)$.
\end{lemma}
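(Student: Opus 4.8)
The plan is to adapt the Brezis--Merle argument \cite{BrezisMerle1991} to the coupled system, exploiting that the subcritical--mass hypothesis is placed on the source term of exactly one of the two equations. Without loss of generality I would assume the second alternative, $\int_\Omega |V_{2n}|e^{u_{1n}}\,dx \leq \epsilon_2 < 4\pi$, the other case being symmetric under the swap $1\leftrightarrow 2$. The essential analytic input is the Brezis--Merle inequality: if $-\Delta w = f$ in a bounded domain with $w=0$ on the boundary and $f\in L^1$, then for every $\delta\in(0,4\pi)$ one has $\int_\Omega \exp\big((4\pi-\delta)|w|/\|f\|_{L^1}\big)\,dx \leq C(\delta,\Omega)$. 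The argument is a two-stage bootstrap through the coupling: first produce an $L^p_{loc}$ bound for $e^{u_{2n}}$ with $p>1$, then use it to bound $u_{1n}^+$ and finally close the loop for $u_{2n}^+$.

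In the first step I would control $e^{u_{2n}}$. Since $-\Delta u_{2n} = V_{2n}e^{u_{1n}} =: f_{2n}$ with $\|f_{2n}\|_{L^1(\Omega)}\leq\epsilon_2<4\pi$, I split $u_{2n} = w_{2n}+h_{2n}$, where $w_{2n}$ solves the equation with zero boundary data and $h_{2n}$ is harmonic. Applying the Brezis--Merle inequality and choosing $\delta$ so small that $p_0 := (4\pi-\delta)/\epsilon_2 > 1$ gives a uniform bound $\|e^{|w_{2n}|}\|_{L^{p_0}(\Omega)}\leq C$. For the harmonic part I use $h_{2n}^+ \leq u_{2n}^+ + w_{2n}^-$ together with the elementary inequality $t^+\leq e^t$, so that $\int_\Omega u_{2n}^+ \leq \int_\Omega e^{u_{2n}} \leq C_2$ and $\int_\Omega w_{2n}^- \leq \int_\Omega e^{|w_{2n}|}\leq C$ are uniformly bounded; since $h_{2n}^+$ is subharmonic, the sub-mean-value inequality yields $\sup_K h_{2n}\leq C_K$ on every $K\subset\subset\Omega$. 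Hence $e^{u_{2n}} = e^{h_{2n}}e^{w_{2n}}\leq C_K\, e^{|w_{2n}|}$ is uniformly bounded in $L^{p_0}(K)$.

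In the second step I would upgrade this to an $L^\infty_{loc}$ upper bound. Now $-\Delta u_{1n} = V_{1n}e^{u_{2n}}$ has right-hand side bounded in $L^{p_0}_{loc}$ by Step 1 and $|V_{1n}|\leq C_1$. Splitting $u_{1n}=w_{1n}+h_{1n}$ on a fixed ball and using $W^{2,p_0}\hookrightarrow L^\infty$ (valid since $p_0>1$) gives $\|w_{1n}\|_{L^\infty}\leq C$, while the harmonic $h_{1n}$ is bounded above by the same sub-mean-value argument applied to $\int u_{1n}^+\leq\int e^{u_{1n}}\leq C_2$. Thus $u_{1n}^+\leq C$ locally, so $e^{u_{1n}}\in L^\infty_{loc}$. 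Feeding this back, the source $f_{2n}=V_{2n}e^{u_{1n}}$ of the $u_{2n}$-equation is now in $L^\infty_{loc}$, hence $w_{2n}$ is bounded, and combined with the upper bound on $h_{2n}$ from Step 1 this yields $u_{2n}^+\leq C$ locally, establishing the claim.

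The main obstacle is the uniform upper bound on the harmonic part in Step 1: the Brezis--Merle inequality only controls the particular solution $w_{2n}$, and it is crucial that the $L^1$-smallness sits on $f_{2n}$, the source of the $u_{2n}$-equation, so that the exponent $p_0=(4\pi-\delta)/\epsilon_2$ can be taken strictly above $1$. This asymmetry also forces the order of the bootstrap: one must first extract $L^p$ integrability of $e^{u_{2n}}$ and only then close the loop for $u_{1n}$, since the complementary mass $\int_\Omega|V_{1n}|e^{u_{2n}}$ is not assumed subcritical and the steps fail if attempted in the reverse order. A minor technical point is to perform all decompositions on a fixed exhaustion of $\Omega$ so that every constant is uniform in $n$.
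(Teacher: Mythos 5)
Your proposal is correct and follows essentially the same route as the paper: decompose each $u_{kn}$ into a zero-boundary part handled by the Brezis--Merle exponential inequality (applied to the component whose source has mass below $4\pi$) plus a harmonic part whose positive part is controlled in $L^1$ via $t^+\leq e^t$ and then locally in $L^\infty$ by the (sub-)mean-value property, followed by the same one-directional bootstrap through the coupling ($e^{u}$ in $L^{p}_{loc}$ with $p>1$, then $W^{2,p}\hookrightarrow L^\infty$, then feed back). The only difference is the immaterial choice of which alternative you take as the without-loss-of-generality case.
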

\begin{proof}
Split $u_{kn}=\bar u_{kn}+\tilde u_{kn},k=1,2$ such that
\begin{equation}
\begin{cases}
\Delta \bar u_{1n}+V_{1n}e^{u_{2n}}=0,\\
\Delta \bar u_{2n}+V_{2n}e^{u_{1n}}=0,\quad \text{in }\Omega\\
\bar u_{1n}=\bar u_{2n}=0\text{ on }\partial\Omega.
\end{cases}
\end{equation}Now if $\int_{\Omega}|V_{1n}|e^{u_{2n}}dx\leq \epsilon_1<{4\pi}$, by Brezis-Merle's inequality, we know that
$\int_{\Omega}e^{(1+\delta)|\bar u_{1n}|}dx\leq C$ for some small $\delta>0$ and also $\bar u_{1n}, \bar u_{2n}$ are uniformly bounded in $ L^1(\Omega)$.
 Noting that $\tilde u_{kn}$ is harmonic in $\Omega$ and $$|\tilde u^+_{kn}|_{L^1(\Omega)}\leq |u^+_{kn}|_{L^1(\Omega)}+|\bar u_{kn}|_{L^1(\Omega)}\leq C,$$ we will have $\tilde u^+_{kn}$ is  bounded in  $L^\infty_{loc}(\Omega)$ by the mean value property of harmonic functions. This means that $e^{u_{1n}}$ is bounded in $L^{1+\delta}_{loc}(\Omega)$. Applying the standard elliptic estimates to $\bar u_{2n}$ yields that $\bar u_{2n}$ is bounded in $W^{2,\gamma}_{loc}(\Omega)$ for some $\gamma>1$, i.e. $\bar u_{2n}$ is  bounded in $L^{\infty}_{loc}(\Omega)$. Taking this estimate back to the equation of $\bar u_{1n}$, we must have $\bar u_{1n}$ is  bounded in $L^{\infty}_{loc}(\Omega)$. This proves the present lemma.
\end{proof}
Define the blow-up set $S$ as follows.
\begin{eqnarray*}S=\{x\in\Omega; &&\text{ there exist two sequences $x_{1n}$, $x_{2n}$ such that }x_{1n},x_{2n}\rightarrow x,\\&&u_{1n}(x_{1n}),u_{2n}(x_{2n})\rightarrow +\infty\}.\end{eqnarray*}
\begin{theorem}\label{thmBM1}
Assume $(u_{1n},u_{2n})$ is a sequence of solutions of \eqref{BM-1} with
$$|V_{1n}|_{L^{\infty}(\Omega)}+|V_{2n}|_{L^{\infty}(\Omega)}\leq C_1,|e^{u_{1n}}|_{L^{1}(\Omega)}+|e^{u_{2n}}|_{L^{1}(\Omega)}\leq C_2$$ Then there exists a subsequence of $(u_{1n},u_{2n})$ satisfying that
\begin{itemize}
\item[(i).]  $S=\emptyset$. Then for each component $u_{kn}$, it either is  bounded in $L^\infty_{loc}(\Omega)$ or locally uniformly converges to $-\infty$.
\item[(ii).] $S\neq \emptyset$. Then $\forall x\in S$, $\exists x_{1n},x_{2n}\rightarrow x,u_{1n}(x_{1n}),u_{2n}(x_{2n})\rightarrow +\infty$. Moreover, $\forall K\subset\subset\Omega\backslash S$, $u_{1n}(x_{1n}),u_{2n}(x_{2n})\rightarrow -\infty$ uniformly on $K$ and $$
    V_{1n}e^{u_{2n}}\rightarrow \sum_{r\in S}a_{1r}\delta_r,V_{2n}e^{u_{1n}}\rightarrow \sum_{r\in S}a_{2r}\delta_r,a_{1r},a_{2r}\geq {4\pi}$$ in the sense of measure and $S$ is a finite set.
\end{itemize}
\end{theorem}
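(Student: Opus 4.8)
The plan is to run a Brezis--Merle type concentration--compactness argument, using Lemma \ref{lemBM1} as the local regularity input and exploiting the coupling between the two equations. First I would pass to limiting measures: since $|V_{in}|_{L^\infty}\le C_1$ and $|e^{u_{jn}}|_{L^1}\le C_2$, the nonnegative measures $|V_{1n}|e^{u_{2n}}\,dx$ and $|V_{2n}|e^{u_{1n}}\,dx$ have uniformly bounded total mass, so after passing to a subsequence they converge weakly-$*$ to finite measures $\nu_1,\nu_2$. I then set
\[
\Sigma_i=\{x\in\Omega:\ \nu_i(\{x\})\ge 4\pi\},\qquad i=1,2.
\]
Because each atom carries mass at least $4\pi$ while $\nu_i(\Omega)\le C_1C_2$, both $\Sigma_1,\Sigma_2$ are finite.

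The decisive coupling step is to show $\Sigma_1=\Sigma_2$ and that this common set is exactly $S$. If $x_0\notin\Sigma_2$ then $\nu_2(\overline{B_r(x_0)})<4\pi$ for some small $r$, hence $\int_{B_r(x_0)}|V_{2n}|e^{u_{1n}}<4\pi$ for large $n$, and Lemma \ref{lemBM1} gives that $u_{1n}^+$ and $u_{2n}^+$ are bounded in $L^\infty(B_{r/2}(x_0))$. In particular $e^{u_{2n}}$ is bounded there, so $|V_{1n}|e^{u_{2n}}$ is bounded in $L^\infty$ near $x_0$ and $\nu_1$ has no atom at $x_0$; thus $x_0\notin\Sigma_1$. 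By symmetry $\Sigma_1=\Sigma_2=:S$. The same computation shows that off $S$ both $u_{in}^+$ are locally bounded, so no blow-up occurs there, while at a point $a\in S$ the concentration $\nu_1(\{a\})\ge 4\pi$ forces $\sup_{B_r(a)}u_{2n}\to+\infty$ (otherwise $e^{u_{2n}}$ would stay bounded and no mass could concentrate), and symmetrically for $u_{1n}$; extracting sequences $x_{1n},x_{2n}\to a$ identifies $S$ with the blow-up set of the statement.

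With $S$ understood, the two alternatives follow. If $S=\emptyset$, then $u_{1n}^+,u_{2n}^+$ are bounded in $L^\infty_{loc}(\Omega)$, hence each $\Delta u_{kn}=-V_{kn}e^{u_{jn}}$ is bounded in $L^\infty_{loc}$; writing $u_{kn}$ as a bounded Newtonian part plus a harmonic part bounded from above and applying the Harnack inequality yields, componentwise, the dichotomy in (i). If $S\neq\emptyset$, then on the connected set $\Omega\setminus S$ (recall $S$ is finite and we work in the plane) the same reasoning gives that each component converges in $C^2_{loc}$ to a finite limit or tends to $-\infty$. Once I know $u_{in}\to-\infty$ on $\Omega\setminus S$, I get $e^{u_{in}}\to 0$ in $L^1_{loc}(\Omega\setminus S)$, so $\nu_i$ is supported on $S$ and equals $\sum_{a\in S}\nu_i(\{a\})\delta_a$ with masses $\ge4\pi$, which is the asserted measure convergence.

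The hard part will be to exclude the finite-limit alternative in (ii), i.e. to prove $u_{in}\to-\infty$ away from $S$. I would argue by contradiction using the Green representation on a ball $B=B_R(a)$ with $\overline B\subset\Omega$ and $B\cap S=\{a\}$: write $u_{2n}(x)=\int_B G_B(x,y)V_{2n}e^{u_{1n}}(y)\,dy+\mathcal H_{2n}(x)$, where $\mathcal H_{2n}$ is the harmonic extension of $u_{2n}|_{\partial B}$. If $u_{2n}$ stayed bounded on $\Omega\setminus S$ then $\mathcal H_{2n}$ would converge to a finite harmonic function, while the potential term would converge to a function whose singularity at $a$ is at least $\frac{\nu_2(\{a\})}{2\pi}\log\frac1{|x-a|}\ge 2\log\frac1{|x-a|}$ in size, because $\nu_2(\{a\})\ge4\pi$. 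Hence the limit of $u_{2n}$ would satisfy $e^{u_2}\gtrsim|x-a|^{-2}\notin L^1$ near $a$, contradicting $\int_B e^{u_{2n}}\le C_2$ by Fatou. The same argument applied to $u_{1n}$, together with the Harnack dichotomy, forces both components to $-\infty$ on $\Omega\setminus S$. The care needed with the signs of $V_{in}$ — so that the atoms governing the singularity are genuinely $\ge4\pi$ — is handled exactly as in Lemma \ref{lemBM1}, by working with $|V_{in}|$ throughout.
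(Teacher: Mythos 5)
Your proposal is correct and follows essentially the same route as the paper: weak-$*$ limits of the measures, the set of atoms of mass $\ge 4\pi$ identified with the blow-up set via Lemma \ref{lemBM1}, the Harnack dichotomy off $S$, and exclusion of the bounded alternative in (ii) by showing a $\ge 4\pi$ atom would force a $-2\log|x-a|$ lower bound incompatible with $\int e^{u_{in}}\le C_2$ (the paper implements this last step with an auxiliary solution $h_{1n}$ with constant boundary data rather than the Green representation, but the estimate is the same). The sign issue for $V_{in}$ that you flag at the end is glossed over in the paper as well, since in the intended application $V_{in}\ge 0$.
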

\begin{proof}
Set $ V_{1n}e^{u_{2n}}\rightarrow \mu_1, V_{2n}e^{u_{1n}}\rightarrow  \mu_2$ in  measure in $\Omega$. Define
$$\Sigma=\{x\in\Omega|\mu_1(\{x\})\geq {4\pi},\mu_2(\{x\})\geq {4\pi}\}.$$
\par Step 1. $S=\Sigma$. First $S\subset \Sigma$. If $x_0\notin\Sigma$, without loss of generality, we may choose $\delta_0$ small enough such that $\int_{B_{\delta_0(x_0)}}V_{1n}e^{u_{2n}}<{4\pi}$. Applying Lemma \ref{lemBM1} in $B_{\delta_0}(x_0)$, one can get $$
u_{1n}^+,u_{2n}^+ \text{ are bounded in } L^\infty_{loc}(B_{\delta_0(x_0)})$$ which means $x_0\notin S$. This proves $S\subset \Sigma$. Picking up $x_0\in \Sigma$, we claim that
$$\forall R>0,\lim_{n\rightarrow \infty}\inf|u^+_{1n}|_{L^\infty(B_R(x_0))}\rightarrow +\infty,\lim_{n\rightarrow \infty}\inf|u^+_{2n}|_{L^\infty(B_R(x_0))}\rightarrow +\infty.$$ Suppose not, we may assume that $$|u^+_{1n}|_{L^\infty(B_{R_0}(x_0))}\leq C,\quad\text{for some }R_0,\quad\text{uniformly for some constant }C.$$
Then by H\"older's inequality, we can take $R<R_0$ small enough such that $\int_{B_R(x_0)}V_{2n}e^{u_{1n}}dx<{4\pi}$. This contradicts with $x_0\in \Sigma$ and proves the claim. Set
$$u_{1n}(x_{1n})=\max_{B_R(x_0)}u_{1n},u_{2n}(x_{2n})=\max_{B_R(x_0)}u_{2n}, B_R(x_0)\cap \Sigma=\{x_0\}.$$ Then $u_{1n}(x_{1n}),u_{2n}(x_{2n})\rightarrow +\infty$, $x_{1n}\rightarrow x_1,x_{2n}\rightarrow x_2$. We need to show $x_1=x_2=x_0$. If not, we may assume $x_1\neq x_0$. By the choice of $B_R(x_0)$, we must have $x_1\notin \Sigma$ which means $u^+_{1n},u^+_{2n}$ are bounded in $L^\infty(B_{\delta}(x_1))$ for some small $\delta>0$. This yields a contradiction and $x_0\in S$. This proves $S=\Sigma$ and $S$ is a finite set.
\par Step 2. $S=\emptyset$ means (i) holds. First if $S=\emptyset$, we will have $u_{1n}^+,u_{2n}^+\in L^\infty_{loc}(\Omega)$. If not, we may assume that $u_{1n}(x_{1n})\rightarrow +\infty$, $x_{1n}\rightarrow x_1\in \Omega$. We claim that:
$$\exists x_{2n}\rightarrow x_1\text{ such that }u_{2n}(x_{2n})\rightarrow +\infty.$$ If not, there exists $\delta_0>0$ such that $u^+_{2n}$ is bounded in $ L^\infty(B_{\delta_0}(x_1))$. Repeating the proof in Lemma \ref{lemBM1}, one can get $u_{1n}^+$ is bounded in $L^\infty_{loc}(B_{\delta_0}(x_1))$ contradicts to our assumption. This implies $x_1\in S$ contradicts to our assumption again. Hence we have $u_{1n}^+,u_{2n}^+$ are bounded in $L^\infty_{loc}(\Omega)$. Then we can apply Harnack inequality to \eqref{BM-1} to get (i) holds.
\par Step 3. $S\neq \emptyset$ implies (ii) holds. By Lemma \ref{lemBM1}, we know that $u_{1n}^+,u^+_{2n}$ are bounded in $L^\infty_{loc}(\Omega\backslash S)$, which means $V_{1n}e^{u_{2n}}\in L^{\infty}_{loc}(\Omega\backslash S),V_{2n}e^{u_{1n}}\in L^{\infty}_{loc}(\Omega\backslash S)$.  This implies $\mu_1,\mu_2$ are bounded measure on $\Omega$ with $\mu_1\in L^{\infty}_{loc}(\Omega\backslash S),\mu_2\in L^{\infty}_{loc}(\Omega\backslash S)$. With $\bar u_{kn},\tilde u_{kn},k=1,2$ defined as in Lemma \ref{lemBM1}, we have $\bar u_{1n},\bar u_{2n}\rightarrow \bar u_1,\bar u_2$ locally uniformly in $\Omega\backslash S$. Also by mean value property, we have $\tilde u_{kn}^+\leq C$. Applying the Harnack inequality yields
\begin{itemize}
\item[(a).] At least one component of $(\tilde u_{1n},\tilde u_{2n})$ is bounded in $L^\infty_{loc}(\Omega\backslash S)$.
\item[(b).] $\tilde u_{1n},\tilde u_{2n}\rightarrow -\infty$ locally uniformly in $\Omega\backslash S$.
\end{itemize}
We exclude situation (a) as follows. If (a) happens, we may assume $\tilde u_{1n}\in L^\infty_{loc}(\Omega\backslash S)$. Consider $x_0\in S$. Then for small $R\leq R_0$, $\tilde u_{1n}\in L^\infty(\partial B_R(x_0))$, $| u_{1n}|_{L^\infty(\partial B_R(x_0))}\leq C$. Consider the following boundary value problem:\begin{equation*}
\begin{cases}
-\Delta h_{1n}=V_{1n}e^{u_{2n}},\quad \text{in }B_R(x_0),\\
h_{1n}=-C,\quad \text{in }\partial B_R(x_0).
\end{cases}
\end{equation*}Then by the maximal principle, we have $$u_{1n}\geq h_{1n},\quad \text{in }B_R(x_0).$$ In particular $\int_{B_R(x_0)}e^{ h_{1n}}dx\leq \int_{B_R(x_0)}e^{u_{1n}}dx\leq C<\infty$. On the other hand, we have $h_{1n}\rightarrow h_1 \in W^{2,q}_{loc}(\bar B_R(x_0)\backslash \{0\}),\forall q<\infty$ with $h_1$ solves
\begin{equation*}
\begin{cases}
-\Delta h_{1}=\mu_1,\quad \text{in }B_R(x_0),\\
h_{1}=-C,\quad \text{on }\partial B_R(x_0).
\end{cases}
\end{equation*} As $x_0\in S$, we have $\mu_1\{x_0\}\geq {4\pi}$ which implies $\mu_1\geq {4\pi}\delta_{x_0}$. One gets that $$h_1(x)\geq -2\ln|x-x_0|+O(1). $$ Then $\int_{B_R(x_0)}e^{ h_1}dx=\infty$ yields a contradiction. Thus we must have situation (b) happens. This ends the proof of our theorem.
\end{proof}
\begin{remark}
When we apply Theorem \ref{thmBM1} to system \eqref{CSS-5}, we will obtain  $e^{u_{1n}}, e^{u_{2n}}$ are uniformly bounded in $L^1(\mathbb R^2)$ by Pohozaev's identity even though we only have $V_{1n}e^{u_{2n}},V_{2n}e^{u_{1n}}$ are uniformly bounded in $L^1(\mathbb R^2)$. This will be found in Lemma \ref{lemPI}.
\end{remark}

\section{A priori estimates}
In this section, we will obtain the a priori estimates for the solutions of the following problem
\begin{equation}\label{CS-5}
\begin{cases}
\Delta u_1+ e^{u_2}(1-e^{u_1})=\displaystyle 4\pi\sum_{i=1}^{N_1}\delta_{\epsilon p_{1i}},\\
\Delta u_2+ e^{u_1}(1-e^{u_2})=\displaystyle 4\pi\sum_{i=1}^{N_2}\delta_{\epsilon p_{2i}},
\end{cases}\text{ in } \mathbb R^2
\end{equation}
where $\epsilon\in[0,1]$ with
\begin{equation}\label{CS-2}
u_i(x)=-2\beta_i\ln |x|+O(1), \quad \beta_i>1, \quad i=1,2, \text{ near }\infty.
\end{equation}
and
\begin{equation}\label{CS-3}
(\beta_1-1)(\beta_2-1)>(N_1+1)(N_2+1)
\end{equation}
 by blow-up analysis.
 In what follows we always assume $\max(N_1,N_2)\geq 1$. Otherwise, this is the 0-vortex case which has been discussed in \cite{HuangLin2013}.
Now suppose $(u_1,u_2)$ is a solution of system \eqref{CS-5}, \eqref{CS-2}, \eqref{CS-3}. Then  applying the maximum principle, we have $u_1,u_2<0$, $\forall x\in \mathbb R^2$. Write
$$u_1(x)=v_1(x)+f_{1\epsilon}(x),u_2(x)=v_2(x)+f_{2\epsilon}(x)$$ where $f_{1\epsilon}(x)=2\displaystyle \sum_{i=1}^{N_1}\ln|x-\epsilon p_{1i}|,f_{2\epsilon}(x)=2\displaystyle \sum_{i=1}^{N_2}\ln|x-\epsilon p_{2i}|$. At first, we shall establish the following Pohozaev's identities.
\begin{lemma}
\label{lemPI}Let $(u_1,u_2)$ be a solution of \eqref{CS-5}, \eqref{CS-2}, \eqref{CS-3}. Then $(u_1,u_2)$ satisfies
\begin{equation*}
\begin{split}
\int_{\mathbb R^2} e^{u_1} dx=&4\pi(\beta_1\beta_2-N_1N_2-\beta_1-N_1)-2\pi\left(\sum_{i=1}^{N_1}\epsilon p_{1i}\cdot\nabla v_2(\epsilon p_{1i})+\sum_{i=1}^{N_2}\epsilon p_{2i}\cdot\nabla v_1(\epsilon p_{2i})\right)\\
\int_{\mathbb R^2} e^{u_2} dx=&4\pi(\beta_1\beta_2-N_1N_2-\beta_2-N_2)-2\pi\left(\sum_{i=1}^{N_1}\epsilon p_{1i}\cdot\nabla v_2(\epsilon p_{1i})+\sum_{i=1}^{N_2}\epsilon p_{2i}\cdot\nabla v_1(\epsilon p_{2i})\right)\\
\int_{\mathbb R^2} e^{u_1+u_2} dx=&4\pi((\beta_1-1)(\beta_2-1)-(N_1+1)(N_2+1))
-2\pi\sum_{i=1}^{N_1}\epsilon p_{1i}\cdot\nabla v_2(\epsilon p_{1i})\\
&-2\pi\sum_{i=1}^{N_2}\epsilon p_{2i}\cdot\nabla v_1(\epsilon p_{2i})
\end{split}
\end{equation*}
\end{lemma}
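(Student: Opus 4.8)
The plan is to obtain the three identities from two independent ingredients: the elementary ``total flux'' relations, gotten by integrating each equation of \eqref{CS-5} over $\mathbb R^2$, and a single genuine Pohozaev identity coming from the dilation field $x\cdot\nabla$. Since only one of the three identities is independent modulo the flux relations, I would prove the identity for $\int e^{u_1+u_2}$ and then recover the other two by adding the flux relations; this also explains why the correction term $-2\pi(\sum_i\epsilon p_{1i}\cdot\nabla v_2(\epsilon p_{1i})+\sum_j\epsilon p_{2j}\cdot\nabla v_1(\epsilon p_{2j}))$ is identical in all three.

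First I would record the needed asymptotics. From \eqref{CS-2}, the decomposition $u_i=v_i+f_{i\epsilon}$, and the representation of $u_i$ as a logarithmic potential of its right-hand side (which decays fast), one gets the refined behaviour $\nabla u_i(x)=-2\beta_i\,x/|x|^2+o(|x|^{-1})$ near $\infty$; in particular $r\,u_i'(r)\to-2\beta_i$ and the tangential derivatives are $o(1)$. Because $\beta_i>1$ this yields $e^{u_i}\sim|x|^{-2\beta_i}$ and $e^{u_1+u_2}\sim|x|^{-2(\beta_1+\beta_2)}$, so each integrand lies in $L^1(\mathbb R^2)$ and every boundary integral at infinity of $e^{u_1},e^{u_2},e^{u_1+u_2}$ tends to $0$. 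Integrating the two equations over $B_R$ and letting $R\to\infty$ then gives $\int_{\mathbb R^2}e^{u_2}(1-e^{u_1})=4\pi(\beta_1+N_1)$ and $\int_{\mathbb R^2}e^{u_1}(1-e^{u_2})=4\pi(\beta_2+N_2)$, two linear relations among the three sought integrals.

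For the Pohozaev step I work on $\Omega_{R,\delta}=B_R\setminus\bigcup_{k,i}B_\delta(\epsilon p_{ki})$, where the equations hold classically, multiply $\Delta u_1=-e^{u_2}(1-e^{u_1})$ by $x\cdot\nabla u_2$ and $\Delta u_2=-e^{u_1}(1-e^{u_2})$ by $x\cdot\nabla u_1$, add, and integrate. Using $\nabla(x\cdot\nabla u_j)=\nabla u_j+(x\cdot\nabla)\nabla u_j$, the gradient side collapses (the two bulk copies of $\nabla u_1\cdot\nabla u_2$ cancel) to the pure boundary term $\int_{\partial\Omega_{R,\delta}}[(x\cdot\nabla u_2)\partial_\nu u_1+(x\cdot\nabla u_1)\partial_\nu u_2-(x\cdot\nu)\nabla u_1\cdot\nabla u_2]\,ds$. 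The nonlinear side simplifies, via $(x\cdot\nabla u_j)e^{u_j}=x\cdot\nabla e^{u_j}$, to $-\int_{\Omega_{R,\delta}}x\cdot\nabla(e^{u_1}+e^{u_2}-e^{u_1+u_2})$, which by the divergence theorem equals $2\int_{\Omega_{R,\delta}}(e^{u_1}+e^{u_2}-e^{u_1+u_2})$ plus boundary terms vanishing as $\delta\to0,\,R\to\infty$. On $\partial B_R$ the refined asymptotics give $\int_{\partial B_R}R[\,2\partial_r u_1\partial_r u_2-\nabla u_1\cdot\nabla u_2\,]\,ds\to 8\pi\beta_1\beta_2$, so the identity reduces to computing the inner-boundary contributions.

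On each circle $\partial B_\delta(q)$ I expand $u_1=2m_1\ln\rho+\phi_1$, $u_2=2m_2\ln\rho+\phi_2$, with $m_k=m_k(q)$ the local vortex multiplicities and $\phi_k$ smooth, and take the $\delta\to0$ limit of the boundary integrand: the genuinely singular degree $-2$ terms integrate to $0$ because the unit normal averages to zero over the circle, while the degree $-1$ terms survive and yield the residue $-8\pi m_1m_2-4\pi m_1\,q\cdot\nabla\phi_2(q)-4\pi m_2\,q\cdot\nabla\phi_1(q)$. Substituting $\nabla\phi_k(q)=\nabla v_k(q)+2\sum_{\ell:\,\epsilon p_{k\ell}\neq q}(q-\epsilon p_{k\ell})/|q-\epsilon p_{k\ell}|^2$ and summing over all vortex sites, the off-diagonal interactions between distinct vortices recombine through the identity $a\cdot(a-b)+b\cdot(b-a)=|a-b|^2$, and together with the $m_1m_2$ residues from coincident sites the whole sum equals exactly $-8\pi N_1N_2-4\pi(\sum_i\epsilon p_{1i}\cdot\nabla v_2(\epsilon p_{1i})+\sum_j\epsilon p_{2j}\cdot\nabla v_1(\epsilon p_{2j}))$. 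Equating the two sides and inserting the flux relations from the second paragraph produces the identity for $\int e^{u_1+u_2}$, whence the other two follow. I expect this inner-boundary analysis to be the main obstacle: one must show the divergent degree $-2$ part cancels and extract the finite residue correctly, and one must handle coincident vortices (as when $\epsilon=0$, where all sites merge) so that the diagonal $m_1m_2$ residues plus the off-diagonal $\nabla f_{k\epsilon}$ interactions always reassemble into the configuration-independent constant $N_1N_2$. A secondary point is justifying the refined gradient asymptotics at infinity that pin down the $8\pi\beta_1\beta_2$ boundary term.
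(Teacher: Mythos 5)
Your proposal is correct, and at the structural level it coincides with the paper's proof: the two flux relations $\int_{\mathbb R^2}e^{u_2}(1-e^{u_1})\,dx=4\pi(\beta_1+N_1)$ and $\int_{\mathbb R^2}e^{u_1}(1-e^{u_2})\,dx=4\pi(\beta_2+N_2)$, combined with a single dilation Pohozaev identity built from the multipliers $x\cdot\nabla u_2$ and $x\cdot\nabla u_1$. The only genuine divergence is in how the Dirac masses are handled. The paper never punctures the domain: it writes $u_i=v_i+f_{i\epsilon}$ with $f_{i\epsilon}=2\sum_j\ln|x-\epsilon p_{ij}|$, multiplies the \emph{regular} equations $\Delta v_1+e^{u_2}(1-e^{u_1})=0$, $\Delta v_2+e^{u_1}(1-e^{u_2})=0$ by $x\cdot\nabla(v_2+f_{2\epsilon})$ and $x\cdot\nabla(v_1+f_{1\epsilon})$, and integrates over all of $B_R$; the vortex contribution then appears as the bulk terms $\int_{B_R}(x\cdot\nabla f_{1\epsilon})\Delta v_2\,dx+\int_{B_R}(x\cdot\nabla f_{2\epsilon})\Delta v_1\,dx$, evaluated by splitting $x\cdot\nabla f_{1\epsilon}=2N_1+2\sum_i\epsilon p_{1i}\cdot\nabla\ln|x-\epsilon p_{1i}|$ and using that $\ln|x-q|$ is $2\pi$ times the fundamental solution, so that $\int 2\epsilon p_{1i}\cdot\nabla\ln|x-\epsilon p_{1i}|\,\Delta v_2\,dx=-4\pi\,\epsilon p_{1i}\cdot\nabla v_2(\epsilon p_{1i})+o(1)$; the outer boundary term is computed from $v_i=-2(\beta_i+N_i)\ln|x|+O(1)$ and equals $-8\pi(\beta_1+N_1)(\beta_2+N_2)$. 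I checked that this bookkeeping and yours give identical totals: your $8\pi\beta_1\beta_2-8\pi N_1N_2$ (plus the common $\nabla v$-correction) is exactly $-8\pi(\beta_1+N_1)(\beta_2+N_2)+8\pi N_1(\beta_2+N_2)+8\pi N_2(\beta_1+N_1)+8\pi N_1N_2$ with the signs rearranged. Your excision-and-residue computation at the vortices is also correct as stated --- the $O(\delta^{-1})$ contributions cancel because $\int_{\partial B_\delta}\nu\,ds=0$, and the diagonal $-8\pi m_1m_2$ residues together with the off-diagonal interactions reassemble into $-8\pi N_1N_2$ via $a\cdot(a-b)+b\cdot(b-a)=|a-b|^2$ --- but it is the more laborious route: the paper's substitution trades the entire $\delta\to0$ residue extraction for one application of the Green-function identity. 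The remaining ingredient you flag, the refined decay $\nabla v_i(x)=-2(\beta_i+N_i)x/|x|^2+o(|x|^{-1})$, is obtained in the paper from the Green representation formula for $v_i$ (as in Lemma \ref{lembl1}) and is needed in either version.
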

\begin{proof}
In terms of $(v_1,v_2)$, \eqref{CS-5} becomes
\begin{equation*}
\begin{cases}
\Delta v_1+e^{u_2}(1-e^{u_1})=0,\\
\Delta v_2+e^{u_1}(1-e^{u_2})=0.
\end{cases}
\end{equation*}Multiplying the first equation by $x\cdot \nabla (v_2+f_{2\epsilon})$, the second equation by $x\cdot \nabla (v_1+f_{1\epsilon})$, integrating over $B_R$ and summing them up, we get
\begin{eqnarray*}
&&\int_{\partial B_R}(x\cdot \nu)(\nabla v_1\cdot \nabla v_2)dS-\int_{\partial B_R}(x\cdot\nabla v_1)(\nu\cdot \nabla v_2)dS-\int_{\partial B_R}(x\cdot\nabla v_2)(\nabla v_1\cdot\nu)dS\\
=&&\int_{\partial B_R}(x\cdot \nu)(e^{u_1}+e^{u_2}-e^{u_1+u_2})dS-2\int_{B_R}(e^{u_1}+e^{u_2}-e^{u_1+u_2})dx\\
&&+\int_{B_R}(x\cdot \nabla f_{1\epsilon})\Delta v_2 dx+\int_{B_R}(x\cdot \nabla f_{2\epsilon})\Delta v_1 dx
\end{eqnarray*}with $\nu=\frac{x}{|x|}$. Since $v_i(x)=-2(\beta_i+N_i)\ln|x|+O(1)$ near $\infty$, we can get
\begin{equation*}
LHS=-8\pi(\beta_1+N_1)(\beta_2+N_2)+o(1).
\end{equation*}
We need to estimate the last two terms on the righthand side,
\begin{eqnarray*}
\int_{B_R}(x\cdot \nabla f_{1\epsilon})\Delta v_2 dx=&&\int_{B_R} \sum_{i=1}^{N_1}\frac{2x\cdot(x-\epsilon p_{1i})}{|x-\epsilon p_{1i}|^2}\Delta v_2 dx\\
=&&-8\pi N_1(\beta_2+N_2)+\int_{B_R}\sum_{i=1}^{N_1}2\epsilon p_{1i}\cdot\nabla\ln|x-\epsilon p_{1i}|\Delta v_2 dx\\
=&& -8\pi N_1(\beta_2+N_2)-4\pi\sum_{i=1}^{N_1}\epsilon p_{1i}\cdot \nabla v_2(\epsilon p_{1i})+o(1).
\end{eqnarray*}In getting the last equality, we have used the fact that $\ln |x-\epsilon p_{1i}|$ is the Green function and the decay property of $|\nabla^k v_2|,k=1,2$ at $\infty$. Also one can note that
\begin{equation*}
\int_{\mathbb R^2} e^{u_2}(1-e^{u_1})dx=4\pi(\beta_1+N_1),\quad \int_{\mathbb R^2} e^{u_1}(1-e^{u_2})dx=4\pi(\beta_2+N_2).
\end{equation*}Combining all the estimates above, we can get the desired identities.
\end{proof}
Our main job in this section is to prove the following theorem.
\begin{theorem}\label{mainthm2}Let $p_{11},\cdots,p_{1N_1},p_{21},\cdots,p_{2N_2}$ be given. For any $(\beta_1,\beta_2)$ satisfying \eqref{CS-3} and
\begin{equation}
\label{CS-4}\frac{N_1}{\beta_1+N_1}+\frac{N_2}{\beta_2+N_2}\notin\{\frac{k-1}{k}|k=2,\cdots,\max(N_1,N_2)\}.
\end{equation}
for any compact set $K$, there exists a constant $C=C(\beta_1,\beta_2,N_1,N_2,\max(|p_{ij}|),K)$ independent of $\epsilon$ such that for any solution $(u_1,u_2)$ of \eqref{CS-5}, \eqref{CS-2},
\begin{equation}\label{estimate1}
|u_1-f_{1\epsilon}|_{L^\infty(K)}+|u_2-f_{2\epsilon}|_{L^\infty(K)}\leq C.
\end{equation}
\end{theorem}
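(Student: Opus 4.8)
The plan is to argue by contradiction via a bubbling analysis localized at the collapsing vortices, in the same spirit as the radial Lemma \ref{thmdeform1} but now in the finite region. Suppose \eqref{estimate1} fails; then there are $\epsilon_n\to\epsilon^*\in[0,1]$ and solutions $(u_{1n},u_{2n})$ of \eqref{CS-5}, \eqref{CS-2} with $|v_{1n}|_{L^\infty(K)}+|v_{2n}|_{L^\infty(K)}\to\infty$, where $v_{in}=u_{in}-f_{i\epsilon_n}$. The maximum principle gives $u_{in}<0$, so each right-hand side $e^{u_{jn}}(1-e^{u_{in}})$ lies in $(0,1)$ and $\Delta v_{in}=-e^{u_{jn}}(1-e^{u_{in}})\in(-1,0)$; hence $v_{in}+\tfrac14|x|^2$ is subharmonic, and combined with $v_{in}\le -f_{i\epsilon_n}\le C$ on a fixed large sphere $\partial B_R$ avoiding the $\epsilon_n p_{ij}$, the barrier argument of Lemma \ref{thmdeform1} yields a locally uniform \emph{upper} bound for $v_{in}$. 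Thus the only way \eqref{estimate1} can fail is a loss of mass from below, i.e. $v_{in}\to-\infty$ (equivalently $u_{in}\to-\infty$) on part of $K$.

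\textbf{Reduction through the Brezis--Merle alternative.} Away from $\{\epsilon_n p_{ij}\}$ the pair solves \eqref{BM-1} with $V_{1n}=1-e^{u_{1n}}$, $V_{2n}=1-e^{u_{2n}}$, $|V_{in}|_{L^\infty}\le 1$, and $e^{u_{in}}\in L^1_{loc}$ trivially (being $<1$), the uniform global bound $|e^{u_{in}}|_{L^1(\mathbb R^2)}\le C$ being furnished by Lemma \ref{lemPI} and the Remark following Theorem \ref{thmBM1}. Since $u_{in}<0$, the blow-up set $S$ of Theorem \ref{thmBM1} is empty, so part (i) applies: on each component of $\Omega\setminus\{\epsilon_n p_{ij}\}$ every $u_{in}$ is either locally bounded or tends to $-\infty$ locally uniformly, and by Harnack the two alternatives propagate across components (this is exactly the coupled feature that one component concentrates iff the other does). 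The assumed failure forces the second alternative, so $e^{u_{in}}\to 0$ locally on $\Omega\setminus\{\epsilon_n p_{ij}\}$. However, the flux identities $\int e^{u_2}(1-e^{u_1})=4\pi(\beta_1+N_1)$ and $\int e^{u_1}(1-e^{u_2})=4\pi(\beta_2+N_2)$ from the proof of Lemma \ref{lemPI} keep $\int e^{u_{in}}$ bounded below by a positive constant. The lost mass must therefore reconcentrate, which can only occur at a collapsing cluster of vortices; in particular this forces $\epsilon^*=0$.

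\textbf{Bubble analysis and quantization.} At the cluster I would rescale to the inner scale $y=x/\delta_n$ where the points $p_{ij}$ become separated, normalizing $u_{in}$ additively, so that the collapsed factors of $f_{i\epsilon_n}$ combine into a singular weight $|y|^{2a_i}$, $a_i$ being the number of component-$i$ vortices involved. Standard elliptic estimates together with the Harnack alternative of Theorem \ref{thmBM1} produce a nontrivial limiting bubble solving a singular Liouville-type system of the form
\begin{equation*}
-\Delta \bar\xi_1=c_0\,|y|^{2a_2}e^{\bar\xi_2},\qquad -\Delta \bar\xi_2=|y|^{2a_1}e^{\bar\xi_1}\quad\text{in }\mathbb R^2,
\end{equation*}
with finite masses $A_1,A_2$; condition \eqref{CS-3} is precisely what guarantees the bubble is nontrivial, as in Lemma \ref{lemdeform2}. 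Running the Pohozaev computation of Lemma \ref{thmdeform1} on the bubble gives $A_1A_2=2(a_2+1)A_1+2(a_1+1)A_2$, while the flux identities bound $A_i$ from above. For a single full bubble ($a_i=N_i$) saturating the flux $A_i=2(\beta_i+N_i)$ one reads off $\tfrac{N_1+1}{\beta_1+N_1}+\tfrac{N_2+1}{\beta_2+N_2}\le 1$, which contradicts \eqref{CS-3}. The remaining, genuinely new, possibility is a multi-scale (partial or iterated) concentration in which only part of the flux is captured at a given scale; matching the accumulated mass against the total flux over the $k$ effective concentration scales produces the resonant identity $\tfrac{N_1}{\beta_1+N_1}+\tfrac{N_2}{\beta_2+N_2}=\tfrac{k-1}{k}$ for some integer $2\le k\le\max(N_1,N_2)$, which is exactly what \eqref{CS-4} forbids. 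Either way we reach a contradiction, proving \eqref{estimate1}.

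\textbf{Main obstacle.} The delicate point is the third step: the single-scale bubble is cleanly excluded by \eqref{CS-3}, but one must carry out the full multi-scale bubbling at the collapsing cluster, selecting the correct sequence of inner scales, tracking \emph{both} components simultaneously through each rescaling, and performing the iterated Pohozaev bookkeeping on the annular regions between scales so as to extract the exact resonant value $\tfrac{k-1}{k}$ to be ruled out by \eqref{CS-4}. One must also control the gradient terms $\sum\epsilon p_{ij}\cdot\nabla v(\epsilon p_{ij})$ appearing in Lemma \ref{lemPI} through these limits. This iterated, two-component multiscale analysis is exactly what makes the argument more involved than the scalar case of Theorem A.
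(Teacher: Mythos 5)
There is a genuine gap, and it is located exactly where you place your ``main obstacle'': you have misidentified where the bubbling occurs. Since $u_{1n},u_{2n}<0$ by the maximum principle, the nonlinearities $e^{u_{2n}}(1-e^{u_{1n}})$ and $e^{u_{1n}}(1-e^{u_{2n}})$ are bounded pointwise by $1$, so they can never concentrate as Dirac masses at \emph{any finite point} --- in particular not at a collapsing vortex cluster, and nothing forces $\epsilon^*=0$ (the vortices $\epsilon_n p_{1i},\epsilon_n p_{2i}$ stay in a fixed bounded set for every $\epsilon^*\in[0,1]$). Your proposed rescaling $y=x/\delta_n$ at the vortex cluster therefore produces no bubble: the right-hand sides remain uniformly bounded there. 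What actually happens in the paper is that the mass escapes to infinity. One first shows (Lemma \ref{lembl2}) that the maximum points $x_{in}$ of $u_{in}$ satisfy $|x_{in}|\to\infty$ with $u_{in}(x_{in})\ge -C$, using the Pohozaev identity $\int e^{u_{1n}+u_{2n}}=4\pi[(\beta_1-1)(\beta_2-1)-(N_1+1)(N_2+1)]+o(1)>0$ against the bound $\int e^{u_{1n}+u_{2n}}\le e^{\max u_{1n}}\int e^{u_{2n}}$. Then one blows \emph{down}: with $r_n=|x_{1n}|^{-1}\to 0$ one sets $w_{in}(x)=u_{in}(x/r_n)-2\ln r_n$, whose nonlinearity $e^{w_{2n}}(1-r_n^2e^{w_{1n}})$ is no longer bounded by $1$ and does concentrate at a finite set $S$ of nonzero points (the images of the escape directions), while all vortices collapse to the origin of the $w$-coordinates where no mass concentrates (Lemma \ref{lemorigin}).

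Consequently your quantization step is also off-target. The integer $k$ in \eqref{CS-4} is not the number of ``effective concentration scales'' in an iterated multiscale expansion at a vortex cluster; it is $k=|S|$, the number of concentration points at infinity. The paper proves via simple-blow-up estimates (Lemmas \ref{lemsimple1}--\ref{lemsimple3}) and a comparison of boundary values through the Green representation (Lemma \ref{lemsame}) that all the masses are equal, $M_q=M$, $N_q=N$, and then three balance relations --- $MN(|S|-1)=2NN_1+2MN_2$ from the Pohozaev identity near the origin, $MN(|S|+1)=2\beta_1N+2\beta_2M$ from the Pohozaev identity after a Kelvin transform at infinity, and the total-flux relations $|S|M=2(\beta_1+N_1)$, $|S|N=2(\beta_2+N_2)$ --- combine to give $\frac{N_1}{\beta_1+N_1}+\frac{N_2}{\beta_2+N_2}=\frac{|S|-1}{|S|}$ with $|S|\le\max(N_1,N_2)$, contradicting \eqref{CS-4}. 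Your single-bubble exclusion via \eqref{CS-3} is the right idea for ruling out concentration at the origin and at infinity of the rescaled picture (it is essentially how Lemmas \ref{lemorigin} and \ref{leminfty} and Theorem \ref{mainthm3} run), but the heart of the proof --- equal masses at several points escaping to infinity --- is absent from your outline, and the cluster-scale analysis you propose in its place would not get started.
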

We will prove Theorem \ref{mainthm2} by contradiction. If Theorem \ref{mainthm2} is not true. Then we have for some compact set $K$ and $\epsilon_n\rightarrow \epsilon^*\in[0,1]$ such that
$$|u_{1n}-f_{1n}|_{L^\infty(K)}+|u_{2n}-f_{2n}|_{L^\infty(K)}\rightarrow \infty, \text{ as }n\rightarrow \infty,$$ where $f_{in}=\displaystyle\sum_{j=1}^{N_i}2\ln|x-\epsilon_n p_{ij}|,i=1,2$. Recall that for $v_{in}(x)=u_{in}(x)-f_{in}(x),i=1,2$,
\begin{equation*}
\begin{cases}
\Delta v_{1n}+e^{u_{2n}}(1-e^{u_{1n}})=0,\\
\Delta v_{2n}+e^{u_{1n}}(1-e^{u_{2n}})=0,
\end{cases}\text{ in }\mathbb R^2.
\end{equation*}From $u_{1n},u_{2n}<0$ in $\mathbb R^2$, we know that for some fixed large $R>\max |p_{ij}|, K\subset\subset B_R$,
$$\Delta (v_{1n}+\frac 12 |x|^2)\geq 0,\Delta (v_{2n}+\frac 12 |x|^2)\geq 0,\text{ in }B_R.$$ This implies that
$$\max_{B_R} (v_{in}+\frac 12 |x|^2)\leq \max_{\partial B_R} (v_{in}+\frac 12 |x|^2)\leq \max_{\partial B_R}-f_{in}+\frac 12R^2\leq C.$$ By this, one can get $v_{1n},v_{2n}$ are uniformly upper bounded in $B_R$. This allows us to assume that
$$\min_{|x|\leq R}v_{1n}\rightarrow -\infty.$$ By Harnack inequality, we will have $v_{1n}\rightarrow  -\infty$ locally uniformly in $\mathbb R^2$, also $u_{1n}\rightarrow  -\infty$ locally uniformly in $\mathbb R^2$ as $n\rightarrow \infty$.
\begin{lemma}
\label{lembl1} If $v_{1n}$ blows up in a compact set $K$, then $v_{2n}$ blows up in the compact set $K$ too.
\end{lemma}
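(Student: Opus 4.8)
The plan is to turn the vanishing of the first component into an ``almost harmonic'' equation for the second component, extract a Harnack dichotomy for $v_{2n}$, and then kill the non-degenerate alternative using the conserved flux already computed inside the proof of Lemma \ref{lemPI}. First I would record the immediate consequence of the hypothesis. Since $v_{1n}\to-\infty$ locally uniformly and $f_{1n}$ is bounded above on every compact set, $u_{1n}=v_{1n}+f_{1n}\to-\infty$, hence $e^{u_{1n}}\to 0$ uniformly on compact subsets of $\mathbb R^2$. Because $0\le e^{u_{1n}}(1-e^{u_{2n}})\le e^{u_{1n}}$, the right-hand side of the equation $-\Delta v_{2n}=e^{u_{1n}}(1-e^{u_{2n}})$ tends to $0$ in $L^\infty_{loc}$, so $v_{2n}$ becomes asymptotically harmonic.

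Next I would set up the dichotomy. On a ball $B_R$ split $v_{2n}=p_{2n}+\psi_{2n}$, where $-\Delta p_{2n}=e^{u_{1n}}(1-e^{u_{2n}})$ with $p_{2n}=0$ on $\partial B_R$ and $\psi_{2n}$ is harmonic. Standard elliptic estimates give $p_{2n}\to 0$ in $C^1_{loc}$, while the uniform upper bound on $v_{2n}$ already established before the lemma (the barrier $v_{2n}+\tfrac12|x|^2$ is subharmonic, since $\Delta(v_{2n}+\tfrac12|x|^2)=2-e^{u_{1n}}(1-e^{u_{2n}})>0$) makes $M-\psi_{2n}$ a nonnegative harmonic function on $B_R$. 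Harnack's inequality, together with an exhaustion $R\to\infty$ and the connectedness of $\mathbb R^2$, then yields a global alternative: either $\psi_{2n}\to-\infty$ locally uniformly, whence $v_{2n}\to-\infty$ and the lemma follows, or $v_{2n}\to v_2$ in $C^0_{loc}(\mathbb R^2)$ with $v_2$ harmonic on all of $\mathbb R^2$.

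The substance of the proof is to exclude the second alternative. Here I would invoke the exact identity $\int_{\mathbb R^2}e^{u_{2n}}(1-e^{u_{1n}})\,dx=4\pi(\beta_1+N_1)$ recorded in the proof of Lemma \ref{lemPI}, which is independent of $n$. In the convergent case $e^{u_{2n}}\to e^{u_2}$ with $u_2=v_2+f_2$, and dominated convergence on each $B_R$ (dominating by $e^{M_R}e^{f_{2n}}$, uniformly admissible as $\epsilon_n p_{2j}\to\epsilon^* p_{2j}$) combined with $e^{u_{1n}}\to 0$ forces $\int_{\mathbb R^2}e^{u_2}\,dx\le 4\pi(\beta_1+N_1)<\infty$. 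On the other hand, harmonicity of $v_2$ gives the exact mean value $\frac{1}{2\pi r}\int_{\partial B_r}v_2\,dS=v_2(0)$, so Jensen's inequality yields $\int_{\partial B_r}e^{v_2}\,dS\ge 2\pi r\,e^{v_2(0)}$; since $e^{f_2}=\prod_j|x-\epsilon^*p_{2j}|^2\ge (r/2)^{2N_2}$ for $r$ large, integrating $\int_{\partial B_r}e^{u_2}\,dS\ge 2\pi r\,(r/2)^{2N_2}e^{v_2(0)}$ in $r$ gives $\int_{\mathbb R^2}e^{u_2}\,dx=\infty$, a contradiction. Thus only the first alternative survives, so $v_{2n}\to-\infty$ locally uniformly, i.e.\ $v_{2n}$ blows up in $K$.

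I expect the exclusion step to be the only genuine difficulty: no purely local argument rules out a bounded $v_{2n}$ accompanying a vanishing $v_{1n}$, because the coupling between the two components is visible only through the conserved flux $4\pi(\beta_1+N_1)$, which insists on a finite, integrable limiting mass that an entire harmonic profile cannot sustain. The remaining points --- justifying the passage to the limit in the mass identity under $\epsilon_n\to\epsilon^*$, and checking that the Harnack alternative is global rather than ball-dependent --- are routine once the splitting $v_{2n}=p_{2n}+\psi_{2n}$ and the uniform upper bound are in place.
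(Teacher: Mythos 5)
Your proof is correct and follows essentially the same route as the paper: assume $v_{2n}$ stays bounded, use Harnack/elliptic estimates to extract a harmonic limit $v_2$, and contradict the finite total flux by showing $\int_{\mathbb R^2}e^{u_2}\,dx=\infty$ via the mean value property. The only (minor) difference is that you bound the limiting mass directly from the identity $\int e^{u_{2n}}(1-e^{u_{1n}})\,dx=4\pi(\beta_1+N_1)$ together with Fatou, whereas the paper first establishes a uniform bound on $\int e^{u_{2n}}\,dx$ itself via Lemma \ref{lemPI} and the Green's-representation gradient estimates at the vortex points; your shortcut is legitimate and slightly more economical.
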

\begin{proof}
If not, we may assume $|v_{2n}|_{L^\infty(K)}\leq C$. By Harnack inequality and standard elliptic estimates, one can get that $v_{2n}(x)\rightarrow v_2(x)$ uniformly in $C^2_{loc}(\mathbb R^2)$ with
$$\Delta v_2=0,\text{ in }\mathbb R^2$$ as we notice that $u_{1n}\rightarrow -\infty$ locally uniformly. This implies that $u_{2n}\rightarrow u_2$ uniformly in $C^2_{loc}(\mathbb R^2\backslash \{\epsilon^* p_{2j},j=1,\cdots,N_2\})$ with
$$\Delta u_2=4\pi\sum_{j=1}^{N_2}\delta_{\epsilon^* p_{2j}},\text{ in }\mathbb R^2.$$ By Lemma \ref{lemPI}, we need to estimate $\nabla v_{1n}(\epsilon_n p_{1j}),\nabla v_{2n}(\epsilon_n p_{2j})$ to get the uniform bound of $\displaystyle \int_{\mathbb R^2} e^{u_{2n}}dx$. By Green's representation formula for $v_{in},i=1,2$, we have
$$v_{1n}(x)=\frac 1{2\pi}\int_{\mathbb R^2}\ln \frac{|y|}{|x-y|}e^{u_{2n}(y)}(1-e^{u_{1n}(y)})dy+c_{1n},\text{ for some constant } c_{1n}.$$ From this, one gets \begin{eqnarray*}
|\nabla v_{1n}(x)|&\leq& \frac 1{2\pi}\int_{\mathbb R^2} \frac{1}{|x-y|}e^{u_{2n}(y)}(1-e^{u_{1n}(y)})dy\\
&\leq & \frac 1{2\pi}\int_{|y-x|\leq 1} \frac{1}{|x-y|}dy+ \frac 1{2\pi}\int_{|x-y|\geq 1} e^{u_{2n}(y)}(1-e^{u_{1n}(y)})dy\\
&\leq & C_1+C_2\leq C<\infty, \text{ independent of } n.
\end{eqnarray*} The argument for $|\nabla v_{2n}|$ is just the same. This indicates that $\displaystyle \int_{\mathbb R^2} e^{u_{2n}}dx$ is uniformly bounded. As $u_2$ is harmonic in $\mathbb R^2\backslash B_{R_0}$, for $R_0\geq \max |p_{2j}|+1$, we get $$\bar u_2(R)=\frac 1{2\pi}\int_{\partial B_R}u_2 dS=\bar u_2(R_0),R\geq R_0.$$ Now we have
\begin{eqnarray*}
\int_{|x|\geq R_0} e^{u_{2}}dx\geq \int_{R_0}^\infty 2\pi r^2e^{\bar u_2(r)}dr=+\infty
\end{eqnarray*}which contradicts to
\begin{eqnarray*}
\int_{\mathbb R^2} e^{u_{2}}dx\leq\lim_{n\rightarrow\infty}\inf\int_{\mathbb R^2} e^{u_{2n}}dx<\infty.
\end{eqnarray*} This means $v_{2n}$ must blow up simultaneously.
\end{proof}

\begin{lemma}
\label{lembl2} Let $x_{in}$ be the maximum point of $u_{in},i=1,2$ respectively. Then $|x_{in}|\rightarrow \infty$ as $n\rightarrow\infty$ and $u_{in}(x_{in})\geq -C$ uniformly for some constant $C>0$.
\begin{proof}
By the previous proof in Lemma \ref{lembl1}, we know that $u_{1n},u_{2n}\rightarrow -\infty$ locally uniformly in $\mathbb R^2$. Suppose $x_{1n}$ is uniformly bounded. Then we will have $u_{1n}(x) \rightarrow -\infty$ uniformly in $\mathbb R^2$. Also by Green's representation formula, we have for $x\in K\subset\subset \mathbb R^2$,
\begin{eqnarray*}
|\nabla v_{1n}(x)|&\leq& \frac 1{2\pi}\int_{\mathbb R^2} \frac{1}{|x-y|}e^{u_{2n}(y)}(1-e^{u_{1n}(y)})dy\\
&\leq & \frac 1{2\pi}\int_{|y-x|\leq R} \frac{1}{|x-y|}e^{u_{2n}(y)}(1-e^{u_{1n}(y)})dy+ \frac 1{2\pi R}\int_{|x-y|\geq R} e^{u_{2n}(y)}(1-e^{u_{1n}(y)})dy\\
&\leq & o(1)R+O(R^{-1}).
\end{eqnarray*}Let $n\rightarrow \infty$, then $R\rightarrow\infty$, one has $\nabla v_{1n}\rightarrow 0$ locally  uniformly in $\mathbb R^2$. The same estimate also holds for $\nabla v_{2n}$. Then by Lemma \ref{lemPI}, we have
\begin{equation*}
4\pi[(\beta_1-1)(\beta_2-1)-(N_1+1)(N_2+1)]+o(1)=\int_{\mathbb R^2}e^{u_{1n}+u_{2n}}dx\leq e^{u_{1n}(x_{1n})}\int_{\mathbb R^2}e^{u_{2n}}dx\rightarrow 0
\end{equation*}which contradicts to $(\beta_1-1)(\beta_2-1)-(N_1+1)(N_2+1)\geq c_0>0$. This implies $x_{in}\rightarrow \infty$. We also note that by Lemma \ref{lemPI},
$$\int_{\mathbb R^2}e^{u_{1n}+u_{2n}}dx\leq 4\pi e^{u_{1n}(x_{1n})}(\beta_1\beta_2-N_1N_2-\beta_2-N_2)+o(1)\leq C_0>0$$ which means $\displaystyle\max_{\mathbb R^2}u_{1n}\geq -C$ uniformly. The same conclusion holds also for $u_{2n}$.
\end{proof}
\end{lemma}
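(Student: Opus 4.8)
The plan is to extract everything from the three Pohozaev identities of Lemma \ref{lemPI}, whose right-hand sides differ from explicit constants only through the common correction term
$$E_n:=2\pi\Big(\sum_{i=1}^{N_1}\epsilon_n p_{1i}\cdot\nabla v_{2n}(\epsilon_n p_{1i})+\sum_{i=1}^{N_2}\epsilon_n p_{2i}\cdot\nabla v_{1n}(\epsilon_n p_{2i})\Big).$$
I work under the standing blow-down hypothesis: $u_{1n}\to-\infty$ locally uniformly by the choice made just before Lemma \ref{lembl1}, and therefore $u_{2n}\to-\infty$ locally uniformly by Lemma \ref{lembl1}. The first step is to prove $E_n\to 0$. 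Granting this, Lemma \ref{lemPI} forces
$$\int_{\mathbb R^2}e^{u_{in}}\,dx\longrightarrow 4\pi(\beta_1\beta_2-N_1N_2-\beta_i-N_i),\qquad \int_{\mathbb R^2}e^{u_{1n}+u_{2n}}\,dx\longrightarrow 4\pi\big((\beta_1-1)(\beta_2-1)-(N_1+1)(N_2+1)\big),$$
and all three limits are strictly positive by \eqref{CS-3} (indeed the mass limit is bounded below by $4\pi(\beta_{3-i}+N_{3-i})>0$). These convergences are the engine for both assertions of the lemma.

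To show $E_n\to 0$ I prove $\nabla v_{in}\to 0$ locally uniformly. Starting from the Green representation already used in Lemma \ref{lembl1}, and using $0\le 1-e^{u_{1n}}\le 1$,
$$|\nabla v_{1n}(x)|\le\frac1{2\pi}\int_{\mathbb R^2}\frac{e^{u_{2n}(y)}}{|x-y|}\,dy,$$
and I split the integral at radius $R$. On $\{|y-x|\le R\}$ the local uniform decay $u_{2n}\to-\infty$ gives $\|e^{u_{2n}}\|_{L^\infty(B_R(x))}\to 0$, so this part is at most $\|e^{u_{2n}}\|_{L^\infty(B_R(x))}\,R\to 0$ for each fixed $R$; on $\{|y-x|\ge R\}$ the uniformly bounded total mass $\int e^{u_{2n}}\le C$ yields an $O(R^{-1})$ bound. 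Letting $n\to\infty$ for fixed $R$ and then $R\to\infty$ gives $\nabla v_{1n}\to 0$ locally uniformly, and the symmetric estimate (using $u_{1n}\to-\infty$) gives $\nabla v_{2n}\to 0$. Since the evaluation points $\epsilon_n p_{ji}$ lie in a fixed compact set, $E_n\to 0$ follows.

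With the mass limits secured, both conclusions are immediate. For $|x_{in}|\to\infty$ I argue by contradiction: if $x_{1n}$ stayed bounded along a subsequence, then $\max_{\mathbb R^2}u_{1n}=u_{1n}(x_{1n})\to-\infty$ by local uniform decay, so $\int e^{u_{1n}+u_{2n}}\le e^{u_{1n}(x_{1n})}\int e^{u_{2n}}\to 0$, contradicting the strictly positive limit of $\int e^{u_{1n}+u_{2n}}$; the case of $x_{2n}$ is identical. For the lower bound I rearrange the same inequality:
$$e^{u_{in}(x_{in})}\ge\frac{\int_{\mathbb R^2}e^{u_{1n}+u_{2n}}\,dx}{\int_{\mathbb R^2}e^{u_{(3-i)n}}\,dx}\longrightarrow\frac{(\beta_1-1)(\beta_2-1)-(N_1+1)(N_2+1)}{\beta_1\beta_2-N_1N_2-\beta_{3-i}-N_{3-i}}=:c_0>0,$$
so $u_{in}(x_{in})\ge -C$ uniformly.

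The main obstacle is the gradient step. The subtle point is that $e^{u_{(3-i)n}}$ is \emph{not} uniformly small on all of $\mathbb R^2$: its $L^1$ mass may concentrate near the escaping maximum point $x_{(3-i)n}$, so the near-field integral cannot be bounded by a global supremum. What legitimizes the diagonal argument is precisely that the points $\epsilon_n p_{ji}$ remain bounded, so local uniform decay on each \emph{fixed} ball controls the near field while the bounded tail handles the rest, independently of where the mass ultimately concentrates.
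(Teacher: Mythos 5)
Your proof is correct and follows essentially the same route as the paper: the Green-representation gradient estimate with the near/far splitting at radius $R$ to conclude $\nabla v_{in}\rightarrow 0$ locally uniformly (hence the correction term in the Pohozaev identities of Lemma \ref{lemPI} vanishes), followed by the inequality $\int_{\mathbb R^2}e^{u_{1n}+u_{2n}}dx\leq e^{u_{1n}(x_{1n})}\int_{\mathbb R^2}e^{u_{2n}}dx$ to obtain both the escape of $x_{in}$ and the uniform lower bound. Your only deviations---proving the gradient decay unconditionally before the contradiction argument (a cleaner ordering, and in fact what the second assertion implicitly requires), and bounding the far field by $\int e^{u_{2n}}\leq C$ (which rests on the uniform gradient bound from Lemma \ref{lembl1}) rather than by $\int e^{u_{2n}}(1-e^{u_{1n}})=4\pi(\beta_1+N_1)$---are cosmetic.
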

Now by Lemma \ref{lembl2}, we set $r_n=|x_{1n}|^{-1}$, then $r_n\rightarrow 0$ as $n\rightarrow \infty$. Set
\begin{equation}\label{BL-4}
w_{in}(x)=u_{in}\left(\frac{x}{r_n}\right)-2\ln r_n.
\end{equation}
Then $w_{in}$ satisfies
\begin{equation}\label{BL-1}\begin{cases}
\Delta w_{1n}+e^{w_{2n}}(1-r_n^2e^{w_{1n}})=\displaystyle 4\pi\sum_{i=1}^{N_1}\delta_{r_n\epsilon_n p_{1i}},\\
\Delta w_{2n}+e^{w_{1n}}(1-r_n^2e^{w_{2n}})=\displaystyle 4\pi\sum_{i=1}^{N_2}\delta_{r_n\epsilon_n p_{2i}},
\end{cases}\text{ in } \mathbb R^2.
\end{equation}Obviously, there holds $r_n\epsilon_n p_{ij}\rightarrow 0,i=1,2$ as $n\rightarrow \infty$. By Theorem \ref{thmBM1} and noting that $w_{1n}(r_nx_{1n})\rightarrow +\infty$ which means the blow up case in Theorem \ref{thmBM1} happens. Since $\displaystyle \lim_{n\rightarrow \infty} r_{n}x_{1n}\rightarrow q\in \mathbb S^1$, there exists a non-empty finite set $S$ of nonzero points such that $w_{in}\rightarrow -\infty,i=1,2$ uniformly on each $K\subset\subset\mathbb R^2\backslash(S\cup\{0\})$ and
$$e^{w_{2n}}(1-r_n^2e^{w_{1n}})\rightarrow \sum_{q\in S}2\pi M_q\delta_q,\quad e^{w_{1n}}(1-r_n^2e^{w_{2n}})\rightarrow \sum_{q\in S}2\pi N_q\delta_q$$ on any $D\subset\subset\mathbb R^2\backslash\{0\}$ with $S\subset D$ in the distribution sense. For any $q\in S$, set $d$ small such that $B_d(q)\cap (S\cup\{0\})=\{q\}$ and
\begin{equation*}
\begin{split}
&M_{q,n}=\frac 1{2\pi}\int_{B_d(q)}e^{w_{2n}}(1-r_n^2e^{w_{1n}})dx,\quad M_{q,n}\rightarrow M_q,\quad \text{as }n\rightarrow\infty,\\
&N_{q,n}=\frac 1{2\pi}\int_{B_d(q)}e^{w_{1n}}(1-r_n^2e^{w_{2n}})dx,\quad N_{q,n}\rightarrow N_q,\quad \text{as }n\rightarrow\infty.
\end{split}
\end{equation*}
Repeating the proof of Lemma \ref{lemPI} over $B_d(q)$ and by  Pohozaev's identity, we have
\begin{equation}
\label{BL-2}\begin{split}
&\int_{B_d(q)}e^{w_{1n}}dx=\pi M_q(N_q-2)+o(1),\quad \int_{B_d(q)}e^{w_{2n}}dx=\pi N_q(M_q-2)+o(1),\\
&\int_{B_d(q)}r_n^2e^{w_{1n}+w_{2n}}dx=\pi (N_qM_q-2M_q-2N_q)+o(1).
\end{split}
\end{equation}
\eqref{BL-2} implies that $M_q,N_q>2$ and $N_qM_q-2M_q-2N_q\geq 0$. Note that
$$\left(\frac{M_q+N_q}{2}\right)^2\geq M_qN_q\geq 2(M_q+N_q).$$ We have $M_q+N_q\geq 8$ and $|S|\leq \frac{\beta_1+\beta_2+N_1+N_2}{4}$.
In the following, we will prove that all $M_q$'s and $N_q$'s are the same respectively. For this purpose, we need to show the local estimates for $w_{1n},w_{2n}$ and the simple blow up property of $w_{1n}, w_{2n}$. For any $q\in S$, for $d$ small enough
$$w_{in}(q_{in})=\max_{B_{d}(q)}w_{in}(x)\rightarrow +\infty,i=1,2.$$ We have the following estimates.
\begin{lemma}
\label{lemsimple1}$$\max_{|x-q|\leq d}(w_{in}(x)+2\ln|x-q_{in}|)\leq C,i=1,2.$$
\end{lemma}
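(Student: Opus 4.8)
My plan is to argue by contradiction via a rescaling at the maximum point, reducing the estimate to the classification of the limiting bubble together with a ``neck'' control. By Lemma~\ref{lembl1} both components blow up at $q$ simultaneously, so it suffices to prove the bound for one index, say $i=1$, the case $i=2$ being identical. Suppose the conclusion fails, so that along a subsequence $\Lambda_n:=\max_{|x-q|\le d}\bigl(w_{1n}(x)+2\ln|x-q_{1n}|\bigr)\to+\infty$, attained at some point $y_n$. Since $w_{1n}\to-\infty$ uniformly on compact subsets of $\overline{B_d(q)}\setminus(S\cup\{0\})$ while $2\ln|x-q_{1n}|$ stays bounded there, the maximizer must satisfy $y_n\to q$ and $|y_n-q_{1n}|\to 0$; moreover $y_n\ne q_{1n}$, since the functional equals $-\infty$ at $q_{1n}$. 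Recall also $q_{1n}\to q$.

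First I would rescale at the maximum point $q_{1n}$ of $w_{1n}$ with the natural bubbling scale $\rho_n=e^{-w_{1n}(q_{1n})/2}\to 0$, setting $V_{in}(y)=w_{in}(q_{1n}+\rho_n y)+2\ln\rho_n$. As in \eqref{BL-1}, the pair $(V_{1n},V_{2n})$ solves the same system, with the vortex sources pushed to infinity (their rescaled positions have modulus $\sim|q|/\rho_n\to\infty$) and the factor $r_n^2e^{w_{in}}$ replaced by $r_n^2\rho_n^{-2}e^{V_{in}}=e^{u_{in}((q_{1n}+\rho_n y)/r_n)}\in(0,1)$. By construction $V_{1n}(0)=0$ and, since $q_{1n}$ is the global maximum of $w_{1n}$ on $B_d(q)$, $V_{1n}\le 0$ on balls exhausting $\mathbb R^2$. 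Applying the convergence dichotomy of Theorem~\ref{thmBM1} to the pair, I obtain $V_{1n}\to V_1$ in $C^2_{loc}$, and either $V_{2n}\to V_2$ or $V_{2n}\to-\infty$ locally uniformly. The latter would force $\Delta V_1=0$ with $V_1\le 0=V_1(0)$, hence $V_1\equiv 0$ and $\int_{\mathbb R^2}e^{V_1}=+\infty$; but $\int_{B_\kappa}e^{V_{1n}}=\int_{B_{\kappa\rho_n}(q_{1n})}e^{w_{1n}}\le\int_{B_d(q)}e^{w_{1n}}\to\pi M_q(N_q-2)<\infty$ by \eqref{BL-2}, a contradiction (the mechanism of Lemma~\ref{lembl1}). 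Thus $(V_1,V_2)$ is a finite-mass solution of the limiting system $-\Delta V_1=c_0e^{V_2}$, $-\Delta V_2=e^{V_1}$ with $0<c_0\le 1$ (as in Lemma~\ref{thmdeform1}), with $V_1(0)=0=\max V_1$.

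Repeating the Pohozaev computation of Lemma~\ref{lemPI} on this limit gives the quantization $\tilde M\tilde N=2\tilde M+2\tilde N$ with $\tilde M,\tilde N>2$, so the mass of each component exceeds $4\pi$ and the classified solution has logarithmic decay $V_1(y)=-\gamma\ln|y|+O(1)$ with $\gamma>2$; in particular $V_1(y)+2\ln|y|\le C_0$ for all $y$. Translating this back, $\phi_{1n}(q_{1n}+\rho_n y):=w_{1n}(q_{1n}+\rho_n y)+2\ln|q_{1n}+\rho_n y-q_{1n}|=V_{1n}(y)+2\ln|y|$ is uniformly bounded on every fixed ball $\{|y|\le R\}$, i.e. $w_{1n}(x)+2\ln|x-q_{1n}|\le C_R$ on $B_{\rho_n R}(q_{1n})$. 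Consequently the hypothesis $\Lambda_n\to+\infty$ forces the maximizer to leave the bubble core: $|\hat y_n|:=|y_n-q_{1n}|/\rho_n\to\infty$. It then remains to bound $w_{1n}(x)+2\ln|x-q_{1n}|$ on the neck $\{\rho_n R\le|x-q_{1n}|\le d\}$.

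This neck estimate is the crux and the step I expect to be hardest. Since $S$ is finite and $q$ is the only blow-up point in $\overline{B_d(q)}$ (Theorem~\ref{thmBM1}), no concentration occurs away from the core, so I would use the Green representation for $w_{1n}$ (as in Lemmas~\ref{lembl1} and \ref{lembl2}) together with the fact that, choosing $R$ large, the mass of $e^{w_{2n}}(1-r_n^2e^{w_{1n}})$ outside $B_{\rho_n R}(q_{1n})$ is arbitrarily small while its total approaches $2\pi M_q$ with $M_q>2$. This yields $w_{1n}(x)=-M_q\ln|x-q_{1n}|+O(1)$ on the neck, whence $w_{1n}(x)+2\ln|x-q_{1n}|=(2-M_q)\ln|x-q_{1n}|+O(1)\le C$ uniformly in $n$, contradicting $\phi_{1n}(y_n)=\Lambda_n\to+\infty$ and proving the lemma. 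The two delicate points to be handled carefully are (a) excluding degeneration of the second component in the rescaling, dealt with by the finite-mass argument above, and (b) the uniform neck bound, where the cooperative coupling of \eqref{BL-1} must be exploited so that the Harnack-type control applies to both components at once.
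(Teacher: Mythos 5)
There is a genuine gap at the step you yourself flag as the crux: the neck estimate. You assert that, after choosing $R$ large, the mass of $e^{w_{2n}}(1-r_n^2e^{w_{1n}})$ outside the bubble core $B_{\rho_nR}(q_{1n})$ is arbitrarily small, and from this deduce $w_{1n}(x)=-M_q\ln|x-q_{1n}|+O(1)$ on the neck. Neither claim is justified. Fatou only gives $\tilde M\leq M_q$ for the mass $2\pi\tilde M$ captured by the limiting bubble; the deficit $2\pi(M_q-\tilde M)$ could well exceed $4\pi$ and live at intermediate scales (a secondary bubble in the neck), which is exactly the scenario the lemma is meant to exclude --- indeed your maximizer $y_n$, where $w_{1n}(y_n)\to+\infty$ at scale $d_n=|y_n-q_{1n}|$ with $\rho_n\ll d_n\ll 1$, \emph{is} such a secondary concentration. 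So the small-mass hypothesis needed to run a Brezis--Merle argument on dyadic annuli of the neck is precisely what must be proven, and the formula $w_{1n}=-M_q\ln|x-q_{1n}|+O(1)$ you invoke is the simple blow-up estimate of Lemma \ref{lemsimple3}, whose proof in the paper uses Lemmas \ref{lemsimple1} and \ref{lemsimple2} as input (to control the region $|y-x|\leq|x|/2$ in the Green representation). Your argument is therefore circular as it stands.

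The paper avoids the neck entirely by rescaling at the \emph{separation} scale rather than the bubbling scale: setting $d_n=|y_n-q_{1n}|$ and $\bar w_{in}(x)=w_{in}(d_nx+q_{1n})+2\ln d_n$, the contradiction hypothesis forces $\bar w_{1n}$ to blow up both at $0$ (since $q_{1n}$ maximizes $w_{1n}$) and at $e=\lim (y_n-q_{1n})/d_n\in\mathbb S^1$, so the rescaled blow-up set $S^*$ has at least two points. The Green representation then identifies the limit profiles $\sum_i m_i\ln|x-z_i|^{-1}$, $\sum_i n_i\ln|x-z_i|^{-1}$, and the translational Pohozaev identity \eqref{PI-2} yields the balancing condition \eqref{simple1}, $m_k\nabla H_k^*(z_k)+n_k\nabla H_k(z_k)=0$, which fails at the point of $S^*$ with maximal first coordinate. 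Your first rescaling (bubble classification, quantization $\tilde M\tilde N=2(\tilde M+\tilde N)$, escape of the maximizer from the core) is correct but does not reach the conclusion; to complete your route you would in effect have to carry out the paper's separation-scale argument anyway, or supply an independent proof of no intermediate-scale concentration.
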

\begin{proof}
If the lemma is not right, without loss of generality, one may assume that
$$w_{1n}(y_n)+2\ln|y_n-q_{1n}|=\max_{|x-q|\leq d}(w_{1n}(x)+2\ln|x-q_{1n}|)\rightarrow +\infty.$$
It is easy to see that $y_n\neq q_{1n}$ and $q_{1n},y_n\rightarrow q$. Set $d_n=|y_n-q_{1n}|$ and
\begin{equation*}
\bar w_{in}(x)=w_{in}(d_n x+q_{1n})+2\ln d_n,i=1,2,\forall |x|\leq \frac d{2d_n}.
\end{equation*}And $\bar w_{in}$ satisfies that
\begin{equation}\label{BL-3}
\begin{cases}
\Delta \bar w_{1n}+e^{\bar w_{2n}}\left(1-\displaystyle r_n^2d_n^{-2}e^{\bar w_{1n}}\right)=0,\\
\Delta \bar w_{2n}+e^{\bar w_{1n}}\left(1-\displaystyle r_n^2d_n^{-2}e^{\bar w_{2n}}\right)=0,
\end{cases}\text{ in }B_{\frac d{2d_n}}(0)
\end{equation}with \begin{eqnarray*}&&\bar w_{1n}(0)=w_{1n}(q_{1n})+2\ln d_n\geq w_{1n}(y_n)+2\ln d_n\rightarrow +\infty,\\ &&\bar w_{1n}\left(\frac{y_n-q_{1n}}{|y_n-q_{1n}|}\right)=w_{1n}(y_n)+2\ln d_n\rightarrow +\infty. \end{eqnarray*}
\par Hence, $\bar w_{1n}$ blows up at 0 and some point $e=\lim_{n\rightarrow +\infty}\frac{y_n-q_{1n}}{|y_n-q_{1n}|}$. By Theorem \ref{thmBM1},  $\bar w_{1n},\bar w_{2n}$ must blow up simultaneously at these two different points. Denote the blow up set of $\bar w_{1n},\bar w_{2n}$ by $S^*=\{z_1,\cdots,z_l\}$, $l\geq 2$. We have
$$e^{\bar w_{2n}}\left(1-\displaystyle r_n^2d_n^{-2}e^{\bar w_{1n}}\right)\rightarrow \sum_{i=1}^l 2\pi m_i\delta_{z_i},\quad e^{\bar w_{1n}}\left(1-\displaystyle r_n^2d_n^{-2}e^{\bar w_{2n}}\right)\rightarrow \sum_{i=1}^l 2\pi n_i\delta_{z_i}.$$
Consider a unit vector $\xi\in\mathbb R^2$. Multiplying the first equation of \eqref{BL-3} by $\xi\cdot \nabla\bar w_{2n}$ and the second equation by $\xi\cdot \nabla\bar w_{1n}$, integrating by parts in $B_d(z_k)$, we get the following Pohozaev's identity,
\begin{equation}
\label{PI-2}
\begin{split}
&\int_{\partial{B_d(z_k)}}(\xi\cdot \nabla \bar w_{2n})(\nu\cdot \nabla\bar w_{1n})dS+\int_{\partial{B_d(z_k)}}(\nu\cdot \nabla \bar w_{2n})(\xi\cdot \nabla\bar w_{1n})dS\\
=&\int_{\partial{B_d(z_k)}}(\xi\cdot\nu)(\nabla \bar w_{2n}\cdot \nabla\bar w_{1n})dS+\int_{\partial{B_d(z_k)}}(\xi\cdot\nu)\left(\frac{r^2_n}{d^2_n}e^{\bar w_{1n}+\bar w_{2n}}-e^{\bar w_{1n}}-e^{\bar w_{2n}}\right)dS
\end{split}
\end{equation}Here $\nu=\frac{x-z_k}{|x-z_k|}$, $B_d(z_k)\cap S^*=z_k$. Using  Green's representation formula of $u_{1n},u_{2n}$, we have for any $x\in K\subset\subset \mathbb R^2\backslash S^*$, for some fixed $p_0\in  \mathbb R^2\backslash S^*$,
\begin{equation*}
\begin{split}
&\bar w_{1n}(x)-\bar w_{1n}(p_0)\\
=&\frac 1{2\pi}\int_{\mathbb R^2}\ln \left|\frac{p_0-y}{x-y}\right|e^{\bar w_{2n}}\left(1-\frac{r_n^2}{d_n^2}e^{\bar w_{1n}(y)}\right)dy+2\sum_{i=1}^{N_1}\ln\left|\frac{d_n x+q_{1n}-r_n\epsilon_n p_{1i}}{d_n p_0 +q_{1n}-r_n\epsilon_n p_{1i}}\right|\\
=& I_1+I_2.
\end{split}
\end{equation*}Since $q_{1n}\rightarrow q\neq 0, r_n\epsilon_n p_{1i}\rightarrow 0$ as $n\rightarrow \infty$, it is easy to find that $I_2\rightarrow 0$ uniformly on $K$. Split the integral domain of $I_1$ into $\cup_{z_k\in S^*} B_r(z_k)$, $B_R(0)\backslash \cup_{z_k\in S^*} B_r(z_k)$ and $\mathbb R^2\backslash B_R(0)$ and denote the corresponding integrals by $J_1,J_2,J_3$ respectively. Obviously,
$$J_1\rightarrow \sum_{i=1}^l m_i\ln \left|\frac{p_0-z_i}{x-z_i}\right|\quad \text{ uniformly in } K.$$
As for $J_2$, noting that $\bar w_{2n}\rightarrow-\infty$ uniformly in $B_R(0)\backslash \cup_{z_k\in S^*} B_r(z_k)$, we have $J_2\rightarrow 0$. For $J_3$, we have  $y\in \mathbb R^2\backslash B_R(0)$, $x\in K$,
$$1-c_1R^{-1}\leq \left|\frac{p_0-y}{x-y}\right|\leq 1+c_2R^{-1},\quad \int_{\mathbb R^2}e^{\bar w_{2n}}\left(1-\frac{r_n^2}{d_n^2}e^{\bar w_{1n}(y)}\right)dy\leq C$$
for some constants $c_1,c_2,C$ independent of $R,n$. This implies $J_3\rightarrow O(R^{-1})$. Let $n\rightarrow \infty$ then $R\rightarrow \infty$. This yields immediately that
$$\bar w_{1n}(x)-\bar w_{1n}(p_0)\rightarrow \sum_{i=1}^l m_i\ln \left|\frac{p_0-z_i}{x-z_i}\right|\quad \text{ uniformly in } K.$$ The same arguments also lead to
$$\bar w_{2n}(x)-\bar w_{2n}(p_0)\rightarrow \sum_{i=1}^l n_i\ln \left|\frac{p_0-z_i}{x-z_i}\right|\quad \text{ uniformly in } K.$$
Set
$$H_k(x)=\sum_{j\neq k}m_j\ln\left|\frac{p_0-z_j}{x-z_j}\right|,\quad H_k^*(x)=\sum_{j\neq k}n_j\ln\left|\frac{p_0-z_j}{x-z_j}\right|.$$ Using Pohozaev's identity \eqref{PI-2} and the fact $\int_{\partial B_d(z_k)}\xi\cdot\nu dS=0$, we can get
\begin{equation*}
\begin{split}
&-\frac{1}{d}\int_{\partial B_d(z_k)}\xi\cdot (m_k\nabla H^*_k+n_k\nabla H_k) dS+\int_{\partial B_d(z_k)}(\xi\cdot\nabla H_k^*)(\nu\cdot\nabla H_k)+(\xi\cdot\nabla H_k)(\nu\cdot\nabla H_k^*)dx\\
=&\int_{\partial B_d(z_k)}(\xi\cdot\nu)(\nabla H_k^*\cdot\nabla H_k)dx+\int_{\partial B_d(z_k)}(\xi\cdot\nu)\left(\frac{r_n^2}{d_n^2}e^{\bar w_{1n}+\bar w_{2n}}-e^{\bar w_{1n}}-e^{\bar w_{2n}}\right)dS+o(1).
\end{split}
\end{equation*}Let $n\rightarrow \infty$ and $d\rightarrow 0$. By noting the arbitrariness of $\xi$, then we get
\begin{equation}\label{simple1}
m_k\nabla H_k^*(z_k)+n_k\nabla H_k(z_k)=0,k=1,\cdots,l.
\end{equation} As $l\geq 2$ and $0,e\in S^*$, without loss of generality, we may assume $z_{1,1}=\max_{k\leq l}z_{k,1}$ and $z_{1,1}>z_{2,1}$ where $z_{k,1}$ stands for the first coordinate of $z_k$. By this choice, we can get $$\sum_{j\neq 1}\frac{(m_1n_j+n_1m_j)(z_{1,1}-z_{j,1})}{|z_1-z_j|^2}>0$$ which contradicts to \eqref{simple1} with $k=1$. This completes the proof of the present lemma.
\end{proof}
Moreover, we also have the following lemma.
\begin{lemma}
\label{lemsimple2}$$\max_{|x-q|\leq d} (w_{1n}(x)+2\ln|x-q_{2n}|)\leq C,\quad \max_{|x-q|\leq d} (w_{2n}(x)+2\ln|x-q_{1n}|)\leq C.$$
\end{lemma}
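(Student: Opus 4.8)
The plan is to reduce the two claimed inequalities to a single geometric fact about the two maximum points. Write $\lambda_{in}=w_{in}(q_{in})=\max_{B_d(q)}w_{in}\to+\infty$ and $\mu_{in}=e^{-\lambda_{in}/2}$, $i=1,2$. I would first establish the distance estimate
\begin{equation}\label{distplan}
|q_{1n}-q_{2n}|\le C\mu_{1n}\qquad\text{and}\qquad|q_{1n}-q_{2n}|\le C\mu_{2n},
\end{equation}
and then deduce the lemma from Lemma \ref{lemsimple1} by an elementary splitting. For $|x-q|\le d$ with $|x-q_{1n}|\ge|q_{1n}-q_{2n}|$ one has $|x-q_{2n}|\le 2|x-q_{1n}|$, so Lemma \ref{lemsimple1} already controls $w_{1n}(x)+2\ln|x-q_{2n}|$; while for $|x-q_{1n}|<|q_{1n}-q_{2n}|$ one has $|x-q_{2n}|<2|q_{1n}-q_{2n}|$, and using $w_{1n}\le\lambda_{1n}$ together with the first bound in \eqref{distplan} one gets $w_{1n}(x)+2\ln|x-q_{2n}|\le\lambda_{1n}+2\ln\mu_{1n}+C=C$, since $\lambda_{1n}+2\ln\mu_{1n}=0$. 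The second inequality of the lemma follows identically from the second bound in \eqref{distplan}.

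To obtain \eqref{distplan} it suffices to prove the two cross-height bounds $w_{2n}(q_{1n})\ge\lambda_{1n}-C$ and $w_{1n}(q_{2n})\ge\lambda_{2n}-C$: evaluating the estimate of Lemma \ref{lemsimple1} for $w_{2n}$ at $x=q_{1n}$ gives $w_{2n}(q_{1n})+2\ln|q_{1n}-q_{2n}|\le C$, and combined with $w_{2n}(q_{1n})\ge\lambda_{1n}-C$ this forces $|q_{1n}-q_{2n}|\le C\mu_{1n}$; the other bound is symmetric. To prove $w_{2n}(q_{1n})\ge\lambda_{1n}-C$ I would perform a blow-up around $q_{1n}$ at the scale $\mu_{1n}$, setting $\tilde w_{in}(y)=w_{in}(q_{1n}+\mu_{1n}y)-\lambda_{1n}$. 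By \eqref{BL-1} and $e^{\lambda_{1n}}\mu_{1n}^2=1$, the pair $(\tilde w_{1n},\tilde w_{2n})$ solves a system of the same type with perturbation coefficient $(r_n/\mu_{1n})^2$; since $u_{in}<0$ gives $\lambda_{1n}\le-2\ln r_n$, this coefficient is $\le 1$, the rescaled delta sources and the remaining points of $S$ escape to infinity (here $q\ne 0$ is used), and standard elliptic estimates yield $\tilde w_{1n}\to\tilde w_1$ in $C^2_{loc}(\mathbb R^2)$ with $\tilde w_1(0)=0=\max\tilde w_1$.

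The decisive point is then to rule out the degenerate alternative for the second component. By Harnack's inequality either $\tilde w_{2n}$ converges in $C^2_{loc}$ or $\tilde w_{2n}\to-\infty$ locally uniformly; in the latter case $\tilde w_1$ would be harmonic and bounded above with an interior maximum, hence $\tilde w_1\equiv 0$, while the preserved mass $\int e^{\tilde w_{1n}}\,dy=\int_{B_d(q_{1n})}e^{w_{1n}}\,dx\le C$ (finite by \eqref{BL-2}) together with Fatou's lemma gives $\int_{\mathbb R^2}e^{\tilde w_1}\,dy<\infty$, a contradiction, exactly as in the proof of Lemma \ref{thmdeform1}. Hence $\tilde w_{2n}(0)=w_{2n}(q_{1n})-\lambda_{1n}$ stays bounded below, which is the desired cross-height bound; the companion bound follows by interchanging the two components and blowing up around $q_{2n}$. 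I expect this blow-up step, and within it the exclusion of $\tilde w_{2n}\to-\infty$, to be the main obstacle; once the two cross-height bounds are secured, the remainder is the elementary triangle-inequality bookkeeping described above.
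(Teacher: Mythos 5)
Your proof is correct, but it follows a genuinely different route from the paper. The paper argues by contradiction: it takes a maximizer $y_n$ of $w_{1n}(x)+2\ln|x-q_{2n}|$, rescales by $d_n=|y_n-q_{2n}|$ \emph{around $q_{2n}$}, and observes that the rescaled pair must blow up both at $\lim (y_n-q_{2n})/d_n\in\mathbb S^1$ and at the origin (since $\bar w_{2n}(0)\ge \bar w_{2n}(z_n)\to+\infty$ by maximality of $q_{2n}$); it then kills the two-point blow-up configuration with the same Pohozaev vector identity \eqref{simple1} used in Lemma \ref{lemsimple1}. You instead prove the quantitative statement $|q_{1n}-q_{2n}|\le C\min(e^{-\lambda_{1n}/2},e^{-\lambda_{2n}/2})$ via the cross-height bounds $w_{2n}(q_{1n})\ge\lambda_{1n}-C$ and $w_{1n}(q_{2n})\ge\lambda_{2n}-C$, and then reduce the lemma to Lemma \ref{lemsimple1} by an elementary split; your blow-up at the natural scale $\mu_{1n}$ around $q_{1n}$ is exactly the rescaling the paper performs later, at the start of the proof of Lemma \ref{lemsimple3}, and your Fatou argument to exclude $\tilde w_{2n}\to-\infty$ matches the one used there and in Lemma \ref{thmdeform1}. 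Your approach buys more: it shows the two component maxima coincide up to the blow-up scale and that the limit profile is a genuine two-component entire solution, information the paper only extracts in Lemma \ref{lemsimple3}; the paper's approach avoids establishing convergence of the second component at this stage by recycling the two-blow-up-point machinery already built. One point you should make explicit: before invoking elliptic estimates and the Harnack dichotomy for $(\tilde w_{1n},\tilde w_{2n})$ you need $\tilde w_{2n}^{+}$ to be bounded in $L^\infty_{loc}$, which does not follow from $w_{2n}\le\lambda_{2n}$ alone (a priori $\lambda_{2n}-\lambda_{1n}$ could be unbounded). This is repaired by applying Theorem \ref{thmBM1} to the rescaled system on $B_R(0)$ — the $L^1$ bounds transfer because $\int_{B_R}e^{\tilde w_{in}}\,dy=\int_{B_{R\mu_{1n}}(q_{1n})}e^{w_{in}}\,dx\le C$ by \eqref{BL-2}, and any concentration point is excluded since it would force either $\tilde w_{1n}\to-\infty$ near $0$ or $\tilde w_{1n}\to+\infty$ somewhere in $B_d(q)$, both contradicting $\tilde w_{1n}\le 0=\tilde w_{1n}(0)$. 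With that inserted, the argument is complete and non-circular (it uses Lemma \ref{lemsimple1} only through the already-proved bound $w_{2n}(x)+2\ln|x-q_{2n}|\le C$ evaluated at $x=q_{1n}$).
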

\begin{proof}
The same as the proof in Lemma \ref{lemsimple1}, we obtain the conclusion by contradiction. Suppose not, we have
\begin{equation*}
w_{1n}(y_n)+2\ln|y_n-q_{2n}|=\max_{|x-q|\leq d} (w_{1n}(x)+2\ln|x-q_{2n}|)\rightarrow +\infty.
\end{equation*}It is obvious that $y_n\rightarrow q$ as $n\rightarrow \infty$. Set
$$\bar w_{in}(x)=w_{in}(d_n x+q_{2n})+2\ln d_n,i=1,2,\forall |x|\leq \frac d{2d_n}$$ where $d_n=|y_n-q_{2n}|$. $\bar w_{in}(x)$ satisfies the same equation as \eqref{BL-3}. Note that
$$\bar w_{1n}\left(\frac{y_n-q_{2n}}{|y_n-q_{2n}|}\right)=w_{1n}(y_n)+2\ln d_n\rightarrow +\infty. $$ Theorem \ref{thmBM1} tells us that the blow-up case happens. Now we need to prove the blow up set contains at least two different points. Denote
$$\lim_{n\rightarrow \infty}\frac{y_n-q_{2n}}{|y_n-q_{2n}|}=q^*\in \mathbb S^1, |q^*|=1.$$ By Theorem \ref{thmBM1}, there exists a sequence of points $z_n\rightarrow q^*$, such that $\bar w_{2n}(z_n)\rightarrow \infty$ and $|z_n|\geq \frac 12$. Since
$$\bar w_{2n}(0)=w_{2n}(q_{2n})+2\ln d_n\geq w_{2n}(d_n z_n+q_{2n})+2\ln d_n=\bar w_{2n}(z_n)\rightarrow +\infty,$$ one gets there are at least two different blow-up points. Then following the same arguments as in Lemma \ref{lemsimple1}, we can prove Lemma \ref{lemsimple2}.
\end{proof}
With the above two lemmas, following the arguments in \cite{BartolucciChenLinTarantello2004}, we now prove the simple blow-up estimates.
\begin{lemma}
\label{lemsimple3}
For any $x\in B_{d}(q),q\in S$, we have
\begin{eqnarray*}
&&\left|w_{1n}(x)-w_{1n}(q_{1n})+\frac{M_{q,n}}{2}\ln(1+e^{w_{1n}(q_{1n})}|x-q_{1n}|^2)\right|\leq C,\\
&&\left|w_{2n}(x)-w_{2n}(q_{2n})+\frac{N_{q,n}}{2}\ln(1+e^{w_{2n}(q_{2n})}|x-q_{2n}|^2)\right|\leq C.
\end{eqnarray*}
\end{lemma}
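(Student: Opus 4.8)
By the symmetry of the system (exchanging the roles of $w_{1n},M_{q,n},q_{1n}$ with $w_{2n},N_{q,n},q_{2n}$), it suffices to establish the first inequality. The plan is to follow the simple blow-up scheme of \cite{BartolucciChenLinTarantello2004}: first isolate the singular part of $w_{1n}$ by a Green representation, then control its radial average about $q_{1n}$ by an ODE argument driven by the concentrating mass, and finally upgrade the radial bound to a pointwise one through an oscillation (Harnack-type) estimate. Lemmas \ref{lemsimple1} and \ref{lemsimple2} are the inputs that make the blow-up genuinely simple.

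First I would localize. Fix $q\in S$ and $d$ small with $B_d(q)\cap(S\cup\{0\})=\{q\}$; since the sources $r_n\epsilon_n p_{1i}\to 0\neq q$ lie outside $B_d(q)$, there $w_{1n}$ is smooth and $-\Delta w_{1n}=e^{w_{2n}}(1-r_n^2e^{w_{1n}})$. Writing Green's representation on $B_d(q)$,
$$w_{1n}(x)=\frac1{2\pi}\int_{B_d(q)}\ln\frac1{|x-y|}\,e^{w_{2n}}(1-r_n^2e^{w_{1n}})\,dy+h_{1n}(x),$$
with $h_{1n}$ harmonic in $B_d(q)$. Lemma \ref{lemsimple1} gives $w_{1n}(x)\le -2\ln|x-q_{1n}|+C$, and Harnack's inequality a matching lower bound, so $w_{1n}$ has bounded oscillation on $\partial B_d(q)$; the Green potential being bounded there as well, the boundary oscillation of $h_{1n}$ is $O(1)$, hence so is $\mathrm{osc}_{B_d(q)}h_{1n}$ by the maximum principle. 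Thus all the singular behaviour is carried by the potential.

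Next I would extract the radial profile. Set $\mu_n^2=e^{w_{1n}(q_{1n})}\to\infty$ and let $\sigma_{1n}(r)$ be the average of $w_{1n}$ over $\partial B_r(q_{1n})$. Integrating the equation over $B_r(q_{1n})$ gives $r\,\sigma_{1n}'(r)=-M_{q,n}(r)$, where $M_{q,n}(r)=\frac1{2\pi}\int_{B_r(q_{1n})}e^{w_{2n}}(1-r_n^2e^{w_{1n}})\,dy$ and $M_{q,n}(d)\to M_q$. The crucial point is that the rescaled pair $W_{in}(y)=w_{in}(q_{1n}+\mu_n^{-1}y)-w_{1n}(q_{1n})$ converges, by Lemmas \ref{lemsimple1} and \ref{lemsimple2}, to a single entire bubble carrying all of the mass, so that $M_{q,n}(r)=M_{q,n}\,\frac{\mu_n^2r^2}{1+\mu_n^2r^2}+o(1)$ uniformly. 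Feeding this into $r\,\sigma_{1n}'=-M_{q,n}(r)$ and integrating from the centre yields
$$\sigma_{1n}(r)-w_{1n}(q_{1n})=-\frac{M_{q,n}}2\ln(1+\mu_n^2r^2)+O(1).$$
To conclude, I would pass from the average to the pointwise statement: Lemma \ref{lemsimple1} bounds $w_{1n}$ above by the radial profile and Lemma \ref{lemsimple2} controls the driving density $e^{w_{2n}}$ relative to $q_{1n}$, so the oscillation of $w_{1n}$ on each circle $\partial B_r(q_{1n})$ is uniformly bounded; combined with the representation above this gives $|w_{1n}(x)-\sigma_{1n}(|x-q_{1n}|)|\le C$, and together with the last display this proves the bound for $w_{1n}$.

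The main obstacle is the mass profile invoked in the second step. Because the system is coupled, $w_{1n}$ is driven by $e^{w_{2n}}$ rather than by $e^{w_{1n}}$, so I must show that $q_{1n}$ and $q_{2n}$ concentrate at the common scale $\mu_n^{-1}$, that $w_{2n}(q_{2n})$ and $w_{1n}(q_{1n})$ are comparable, and that the two components organise into one joint bubble with no residual mass leaking into $B_d(q)\setminus\{q\}$. This is precisely where Lemmas \ref{lemsimple1} and \ref{lemsimple2}, forbidding a second concentration point of either component inside $B_d(q)$, are indispensable, and the uniform matching of $M_{q,n}(r)$ to the explicit radial profile is the delicate part of the argument.
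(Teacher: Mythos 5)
Your overall architecture (reduce to the first estimate by symmetry, rescale about $q_{1n}$ at scale $\mu_n^{-1}=e^{-w_{1n}(q_{1n})/2}$, control the tail of the concentrating mass, integrate the radial ODE $r\,\sigma_{1n}'(r)=-M_{q,n}(r)$, and upgrade to a pointwise bound by circle-oscillation estimates) is reasonable and close in spirit to the paper, which instead works directly with the Green representation of the rescaled function $\bar w_{1n}(x)=w_{1n}(s_nx+q_{1n})+2\ln s_n$. The circle-oscillation step is fine: Lemma \ref{lemsimple2} gives $e^{w_{2n}(y)}\le C|y-q_{1n}|^{-2}$, which suffices to bound the oscillation of $w_{1n}$ on each circle $\partial B_r(q_{1n})$.

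The gap is in the step you yourself flag as delicate. You assert that $M_{q,n}(r)=M_{q,n}\frac{\mu_n^2r^2}{1+\mu_n^2r^2}+o(1)$ uniformly, ``by Lemmas \ref{lemsimple1} and \ref{lemsimple2}''. Those two lemmas only yield the borderline decay $\bar w_{2n}(y)\le -2\ln|y|+C$ for the rescaled second component, hence $e^{\bar w_{2n}(y)}\le C|y|^{-2}$, which is \emph{not} integrable over the annulus $R_0\le|y|\le d/s_n$: it only gives $M_{q,n}-M_{q,n}(r)\le C\ln\frac{d}{r}$, and then $\int_r^d\frac{M_{q,n}-M_{q,n}(s)}{s}\,ds$ grows like $\bigl(\ln\frac dr\bigr)^2$, so your integration of the radial ODE does not close. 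What is actually needed is the improved decay $\bar w_{in}(y)\le-(2+\delta_0)\ln|y|+C_0$ for $2R_0\le |y|\le d/s_n$, i.e.\ claim \eqref{claim1} in the paper. Obtaining it requires the extra input that the limiting bubble of the rescaled system carries mass strictly larger than $2$ (in the normalization $\frac1{2\pi}\int$) in each component already within a fixed ball $B_{R_0}$ --- a fact the paper derives from the Green representation and $L^1$-integrability of the entire limit solution --- followed by a bootstrap through the Green representation on the annulus. Your ``single bubble with no residual mass leaking'' claim is exactly this statement, but Lemmas \ref{lemsimple1} and \ref{lemsimple2} do not by themselves rule out mass sitting at intermediate scales between $\mu_n^{-1}$ and $d$. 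Moreover, for the coupled system the limit profile is not a standard Liouville bubble, so the explicit formula $M_{q,n}\frac{\mu_n^2r^2}{1+\mu_n^2r^2}$ is not literally available; it should be replaced by the weaker (and sufficient) tail bound $M_{q,n}-M_{q,n}(r)\le C(\mu_nr)^{-\delta_0}$, which is precisely what the missing decay estimate provides.
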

\begin{proof}
Set $$\bar w_{in}(x)=w_{in}(s_n x+q_{1n})+2\ln s_n,\quad s_n=\exp\left(-\frac{w_{1n}(q_{1n})}{2}\right),\quad x\in B_{2d/s_n}(0),$$
where $B_{4d}(q)\cap S=\{q\}$. Then $w_{in},i=1,2$ satisfies
the same system as \eqref{BL-3} with $d_n$ replaced by $s_n$.
Since $\bar w_{1n}(0)=0$ and $u_{1n}(x)<0$, we have $\frac {r_n}{s_n}\leq 1$. Also, we can apply Harnack inequality and (i) of Theorem \ref{thmBM1} in any compact domain $K$ contains $0$ to get that $\bar w_{1n}\in L^{\infty}(K)$ and either $\bar w_{2n}\in L^{\infty}(K)$ or $\bar w_{2n}\rightarrow -\infty$ locally uniformly.  By the same arguments as in Lemma \ref{lembl1}, we can exclude the case  $\bar w_{2n}\rightarrow -\infty$ if we notice that $\int e^{\bar w_{1n}}dx\leq C<\infty$. Now we can take a subsequence such that $\bar w_{kn}\rightarrow \bar w_{k}$ in $C^2_{loc}(\mathbb R^2)$ with $\bar w_{k}$ satisfying
\begin{equation*}
\begin{cases}
\Delta \bar w_1+e^{\bar w_2}(1-c^2e^{\bar w_1})=0,\\
\Delta \bar w_2+e^{\bar w_1}(1-c^2e^{\bar w_2})=0,
\end{cases}\text{ in }\mathbb R^2
\end{equation*}for some constant $0\leq c\leq 1.$ By the $L^1$ integrability of $e^{\bar w_1},e^{\bar w_2},c^2e^{\bar w_1+\bar w_2}$ in $\mathbb R^2$ and Green's representation formula for $\bar w_1,\bar w_2$, we must have
$$\frac 1{2\pi}\int_{\mathbb R^2}e^{\bar w_2}(1-c^2e^{\bar w_1})dx>2,\quad \frac 1{2\pi}\int_{\mathbb R^2}e^{\bar w_1}(1-c^2e^{\bar w_2})dx>2.$$ This implies we can choose $R_0$ large enough and $\delta_0>0$ small enough such that
\begin{equation*}
\frac{1}{2\pi}\int_{B_{R_0}}e^{\bar w_{2n}}(1-r_n^2/s_n^2 e^{\bar w_{1n}})dx>2+4\delta_0,\quad \frac{1}{2\pi}\int_{B_{R_0}}e^{\bar w_{1n}}(1-r_n^2/s_n^2 e^{\bar w_{2n}})dx>2+4\delta_0.
\end{equation*}
We claim that
\begin{equation}\label{claim1}
\bar w_{1n}(x)\leq -(2+\delta_0)\ln |x|+C_0,\text{ for }2R_0\leq |x|\leq \frac{d}{s_n}.
\end{equation}
By Green's representation formula and the same arguments as in Lemma \ref{lemsimple1}, one can get for $2R_0\leq |x|\leq d/s_n$
\begin{equation*}
\bar w_{1n}(x)=\frac{1}{2\pi}\int_{|y|\leq d/s_n}\ln \frac{|y|}{|x-y|}e^{\bar w_{2n}(y)}(1-r_n^2/s_n^2 e^{\bar w_{1n}(y)}) dy+O(1)
\end{equation*} where $O(1)$ always denotes some uniform bounded term from now on.
Split the integral into three domains: $D_1=\{|y|\leq R_0\}$, $D_2=\{|y-x|\leq \frac{|x|}2,R_0\leq |y|\leq d/s_n\}$, $D_3=\{|y-x|\geq \frac{|x|}2,R_0\leq y\leq d/s_n\}$. In $D_1$, we have $\frac{|x|}2\leq |x-y|\leq \frac{3|x|}{2}$, then
\begin{eqnarray*}
\int_{D_1}\ln \frac{|y|}{|x-y|} e^{\bar w_{2n}(y)}(1-r_n^2/s_n^2e^{\bar w_{1n}(y)})dy
=O(1)-\int_{|y|\leq R_0} e^{\bar w_{2n}(y)}(1-r_n^2/s_n^2e^{\bar w_{1n}(y)})dy\ln|x|.
\end{eqnarray*}
In $D_2$, we have $\frac 32 |x|\geq |y|\geq \frac{|x|}2$. From Lemma \ref{lemsimple1} and Lemma \ref{lemsimple2}, one can get
\begin{eqnarray*}
\bar w_{kn}(x)+2\ln |x|=w_{kn}(z)+2\ln |z-q_{1n}|\leq C, \quad z=s_n x+q_{1n}.
\end{eqnarray*}Substituting this into the integration on $D_2$, we have
\begin{eqnarray*}
\int_{D_2}\ln \frac{|y|}{|x-y|} e^{\bar w_{2n}(y)}(1-r_n^2/s_n^2e^{\bar w_{1n}(y)})dy\leq O(1)+\frac C{|x|^2}\left(\pi (|x|/2)^2\ln |x|-\pi\int_0^{\frac{|x|}{2}}\ln r d r^2\right)=O(1)
\end{eqnarray*}In $D_3$, we have $|x-y|\geq \frac{|y|}3$. This implies
\begin{eqnarray*}
\int_{D_3}\ln \frac{|y|}{|x-y|} e^{\bar w_{2n}(y)}(1-r_n^2/s_n^2e^{\bar w_{1n}(y)})dy\leq C.
\end{eqnarray*}
Combining the above three estimates and the choice of $R_0$, we prove the claim \eqref{claim1}. The same estimates also hold true for $\bar w_{2n}$. By the claim, we have
\begin{equation*}
\int_{|y|\leq d/s_n}\ln |y| e^{\bar w_{2n}}(1-r_n^2/s_n^2 e^{\bar w_{1n}})dy= O(1)
\end{equation*}which implies
\begin{equation*}
\bar w_{1n}(x)=-M_{q,n}\ln |x|+O(1)+\frac 1{2\pi}\int_{|y|\leq d/s_n}\ln \frac{|x|}{|x-y|} e^{\bar w_{2n}}(1-r_n^2/s_n^2 e^{\bar w_{1n}})dy,\quad 2R_0\leq |x|\leq \frac{d}{s_n}.
\end{equation*}We now estimate the last term in the above equation. Split the integral into three parts: $K_1=\{|y|\leq \frac {|x|}2\}$, $K_2=\{|y-x|\leq \frac{|x|}{2},|y|\leq d/s_n\}$, $K_3=\{|y-x|\geq \frac{|x|}2,\frac {|x|}2\leq |y|\leq d/s_n\}$.
In $K_1$, we have $2/3\leq \frac{|x|}{|y-x|}\leq 2$. Thus the integration on $K_1$ is $O(1)$. In $K_2$, we have $|y|\geq |x|/2\geq R_0$, therefore
\begin{eqnarray*}
\left|\int_{K_2}\ln \frac{|x|}{|x-y|} e^{\bar w_{2n}}(1-r_n^2/s_n^2 e^{\bar w_{1n}})dy\right|\leq c|x|^{-2-\delta_0}\left|\int_{|y-x|\leq |x|/2}\ln \frac{|x|}{|x-y|}dy\right|\leq C.
\end{eqnarray*}
In $K_3$, we have $\frac{1}{2|y|}\leq \frac{|x|}{|y-x|}\leq 2$, then
$$\left|\int_{K_3}\ln \frac{|x|}{|x-y|} e^{\bar w_{2n}}(1-r_n^2/s_n^2 e^{\bar w_{1n}})dy\right|\leq C+\left|\int_{K_3}\ln |y| e^{\bar w_{2n}}(1-r_n^2/s_n^2 e^{\bar w_{1n}})dy\right|\leq C_1.$$
The three estimates yield that
$$\bar w_{1n}(x)+M_{q,n}\ln |x|=O(1),\quad \text{for }2R_0\leq |x|\leq d/s_n.$$ Combining with the uniform bound of $\bar w_{1n}$ in $B_{2R_0}$, we finish the proof of simple blow-up estimates.
\end{proof}
Now we can prove all the $M_q$'s and $N_q$'s are the same respectively.
\begin{lemma}
\label{lemsame}
Let $(w_{1n},w_{2n})$ and $M_q,N_q$ be defined as before. Then $M_p=M_q,N_p=N_q$ for all $p,q\in S$.
\end{lemma}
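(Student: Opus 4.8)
The plan is to read off the heights of the two components at every bubble from a single global representation of $w_{1n},w_{2n}$, and then to exploit the local Pohozaev relations \eqref{BL-2} together with the a priori ceiling $w_{1n}\le -2\ln r_n$ to force all bubbles to the same height. Throughout write $a=M_{q_0}-2>0$, $b=N_{q_0}-2>0$, where $q_0=\lim_n r_nx_{1n}\in S$ is the image of the global maximum point $x_{1n}$ of $u_{1n}$, and set $L_{q,n}=\max_{B_d(q)}w_{1n}$ for $q\in S$.

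First I would establish, exactly as in the Green's representation step of Lemma \ref{lemsimple1} and using $e^{w_{2n}}(1-r_n^2e^{w_{1n}})\to\sum_{p\in S}2\pi M_p\delta_p$ together with the collapsing vortex $4\pi N_1\delta_0$ at the origin, that on every outer compact set $K\subset\subset\mathbb R^2\setminus(S\cup\{0\})$
\[
w_{1n}(x)=-\sum_{p\in S}M_{p,n}\ln|x-p|+2N_1\ln|x|+c_{1n}+o(1),
\]
and similarly for $w_{2n}$ with $N_{p,n}$, $2N_2\ln|x|$ and a constant $c_{2n}$. The decisive structural point is that $c_{1n},c_{2n}$ do not depend on $x$ (nor on which bubble one looks at). Matching this outer expansion near a fixed $q$ against the inner simple-blow-up profile of Lemma \ref{lemsimple3}, $w_{1n}(x)=L_{q,n}-\tfrac{M_{q,n}}{2}\ln(1+e^{L_{q,n}}|x-q_{1n}|^2)+O(1)$, and using that $\max_{B_d(q)}w_{2n}=L_{q,n}+O(1)$ (the two components of a bubble have comparable heights), yields the balances
\[
L_{q,n}\bigl(1-\tfrac{M_{q,n}}{2}\bigr)=c_{1n}+O(1),\qquad L_{q,n}\bigl(1-\tfrac{N_{q,n}}{2}\bigr)=c_{2n}+O(1).
\]

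Applying these at $q_0$, where $u_{1n}(x_{1n})=O(1)$ by Lemma \ref{lembl2} forces $L_{q_0,n}=-2\ln r_n+O(1)$, identifies $c_{1n}=(-2\ln r_n)(1-\tfrac{M_{q_0,n}}{2})+O(1)$ and likewise $c_{2n}$. Dividing the two balances at a general $q$ and letting $n\to\infty$ gives the common ratio $\frac{M_q-2}{N_q-2}=\frac ab$; dividing instead by $-2\ln r_n$ gives $t_q:=\lim_n\frac{L_{q,n}}{-2\ln r_n}=\frac{M_{q_0}-2}{M_q-2}=\frac a{M_q-2}$. Since $w_{1n}\le -2\ln r_n$ everywhere (because $u_{1n}<0$), we have $t_q\le 1$, hence $M_q\ge M_{q_0}$ for every $q\in S$. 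Finally, if $M_q>M_{q_0}$ for some $q$ then $t_q<1$, so $L_{q,n}\le(1-\delta)(-2\ln r_n)$ for some $\delta>0$ and thus $c_q:=\lim_n r_ne^{L_{q,n}/2}=0$; this kills the coupling integral, $\int_{B_d(q)}r_n^2e^{w_{1n}+w_{2n}}\to 0$, and the third identity in \eqref{BL-2} degenerates to $(M_q-2)(N_q-2)=4$. Together with $\frac{M_q-2}{N_q-2}=\frac ab$ this gives $M_q-2=2\sqrt{a/b}$, while $M_q>M_{q_0}=2+a$ forces $ab<4$; but the same identity at $q_0$ reads $(M_{q_0}-2)(N_{q_0}-2)=ab\ge 4$, a contradiction. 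Hence $M_q=M_{q_0}$ and then $N_q=N_{q_0}$ for all $q\in S$.

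The main obstacle is the first step: producing the outer expansion with a genuinely $x$-independent additive constant and a uniform $o(1)$ error on outer compacta, which requires careful control of the neck regions joining the bubbles to the outer scale and of the collapsing vortex at the origin; here the simple-blow-up estimates of Lemmas \ref{lemsimple1}--\ref{lemsimple3} are indispensable. A secondary delicate point, feeding the two balance relations, is the verification that both components attain comparable maximal heights $L_{q,n}+O(1)$ at each bubble, and that the $o(1)$ terms survive division by the diverging factor $-2\ln r_n$; it is to sidestep the resulting $o(1)\cdot\ln r_n$ ambiguity that I use the a priori ceiling $w_{1n}\le -2\ln r_n$ rather than a direct evaluation of $c_q$.
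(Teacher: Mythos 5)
Your architecture runs parallel to the paper's Steps 3--4 (your outer Green representation with an $x$-independent constant $c_{1n}$ is exactly the paper's oscillation bound between boundary circles of different bubbles, and the matching against Lemma \ref{lemsimple3} is the same balance the paper writes as $w_{1n}(x)=(1-\tfrac{M_{q,n}}{2})w_{1n}(q_{1n})+O(1)$ on $\partial B_d(q)$). Your endgame is genuinely different and rather elegant: from the ceiling $w_{1n}\le-2\ln r_n$ you get $M_q\ge M_{q_0}$ for all $q$, and strict inequality forces the coupling integral at $q$ to vanish, whence $(M_q-2)(N_q-2)=4$, which you play off against $(M_{q_0}-2)(N_{q_0}-2)\ge 4$. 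The paper instead splits on whether $M_qN_q>2(M_q+N_q)$ or $=$, using its Steps 1--2 to show that positive coupling pins \emph{both} heights to $-2\ln r_n+O(1)$.

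There is, however, a load-bearing step you assert without proof: the ``comparable heights'' claim $\max_{B_d(q)}w_{2n}=\max_{B_d(q)}w_{1n}+O(1)$, which you need twice --- to write the second balance with the same $L_{q,n}$ (hence to get the common ratio $\frac{M_q-2}{N_q-2}=\frac{a}{b}$), and to identify $c_{2n}$ at $q_0$ (Lemma \ref{lembl2} controls the height of $w_{1n}$ at $q_0$ via $u_{1n}(x_{1n})\ge -C$, but the maximum point of $u_{2n}$ may converge to a \emph{different} bubble, so it says nothing about $\max_{B_d(q_0)}w_{2n}$). Without the common ratio your final contradiction collapses: knowing only $(M_q-2)(N_q-2)=4$ and $M_q>M_{q_0}$ gives $N_q<N_{q_0}$, which is not by itself contradictory. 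The claim is in fact true, but proving it requires going back inside the proof of Lemma \ref{lemsimple3}: after rescaling by $s_n=e^{-w_{1n}(q_{1n})/2}$ about $q_{1n}$, the case $\bar w_{2n}\to-\infty$ is excluded, so $\bar w_{2n}(0)=w_{2n}(q_{1n})-w_{1n}(q_{1n})$ converges to a finite limit, giving $\max w_{2n}\ge \max w_{1n}-C$, and symmetrically; the statement of Lemma \ref{lemsimple3} alone does not yield this because $|q_{1n}-q_{2n}|$ is not controlled. The paper sidesteps the issue entirely with its Steps 1--2: translating to $x_{1n}$ produces an entire non-topological limit with $M_0N_0>2(M_0+N_0)$, Fatou transfers this to $q_0$, and a nonvanishing coupling integral at a bubble forces both components' heights there to equal $-2\ln r_n+O(1)$. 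You should either import that argument or supply the rescaling argument above; as written the proof has a genuine hole at its pivotal step.
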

\begin{proof}
Step 1. $\exists q\in S$, there holds $M_qN_q>2(M_q+N_q)$. Recall $x_{1n}$ is the maximum point of $u_{1n}$ and $q_0=\lim_{n\rightarrow \infty} \frac{x_{1n}}{|x_{1n}|}\in S$, $r_n=1/|x_{1n}|$, $|q_0|=1$. Set
$$\bar u_{1n}(x)=u(x+x_{1n}),\bar u_{2n}(x)=u(x+x_{1n}).$$ By Lemma \ref{lembl2}, one gets $\bar u_{1n}(0)\geq -C$. Also as $|x_{1n}|\rightarrow \infty$, by the standard elliptic estimates, we have $\bar u_{1n}\rightarrow \bar u_1$ in $C^2_{loc}(\mathbb R^2)$. By Theorem \ref{thmBM1}, Harnack inequality and the arguments in Lemma \ref{lembl1}, we shall have $\bar u_{2n}\rightarrow \bar u_2$ in $C^2_{loc}(\mathbb R^2)$ too. This yields that
\begin{equation*}
\begin{cases}
\Delta \bar u_1+e^{\bar u_2}(1-e^{\bar u_1})=0,\\
\Delta \bar u_2+e^{\bar u_1}(1-e^{\bar u_2})=0,
\end{cases}\text{ in }\mathbb R^2
\end{equation*} with $\int_{\mathbb R^2}e^{\bar u_1}+e^{\bar u_2}dx< \infty$. This means $\bar u_1,\bar u_2$ are non-topological solutions. Denote
$$\int_{\mathbb R^2}e^{\bar u_2}(1-e^{\bar u_1})=2\pi M_0,\int_{\mathbb R^2}e^{\bar u_1}(1-e^{\bar u_2})=2\pi N_0.$$ By Lemma \ref{lemPI}, we have
$$\int_{\mathbb R^2}e^{\bar u_1+\bar u_2}dx=\pi(M_0N_0-2M_0-2N_0)>0. $$ By the Fatou's lemma, we have $M_{q_0}\geq M_0,N_{q_0}\geq N_0$ and
$$\frac 1{M_{q_0}}+\frac 1{N_{q_0}}\leq \frac{1}{M_0}+\frac 1{N_0}<\frac 12.$$ This ends the proof of the first step.
\par Step 2. If $M_qN_q>2(M_q+N_q)$, we have $w_{in}(q_{in})=-\ln r_n^2+O(1).$ In fact, by \eqref{BL-2}, we have
$$\int_{B_d(q)}r_n^2 e^{w_{1n}+w_{2n}}dx=\pi(M_qN_q-2M_q-2N_q)+o(1)\geq c_0>0.$$ Thus
$$\max_{B_d(q)}(r_n^2 e^{w_{1n}})\int_{B_d(q)}e^{w_{2n}}dx\geq \int_{B_d(q)}r_n^2 e^{w_{1n}+w_{2n}}dx\geq c_0. $$
 This means that $w_{1n}(q_{1n})+2\ln r_n\geq -C$ uniformly. Also noting $w_{1n}+2\ln r_n\leq 0$, we have $w_{kn}(q_{kn})=-\ln r_n^2+O(1)$.
\par Step 3. For any $p,q\in S$,
$$|w_{kn}(x)-w_{kn}(y)|\leq C,\quad \forall x\in \partial B_d(q),y\in \partial B_d(p),k=1,2.$$ By Green's representation formula, we have
\begin{eqnarray*}
|w_{1n}(x)-w_{1n}(y)|\leq \frac 1{2\pi}\left|\int_{\mathbb R^2}\ln \frac{|x- z|}{|y- z|}e^{w_{2n}(z)}(1-r_n^2 e^{w_{1n}(z)})dz\right|+2\sum_{i=1}^{N_1}\left|\ln\frac{|x-r_n\epsilon_n p_{1i}|}{|y-r_n\epsilon_n p_{1i}|}\right|.
\end{eqnarray*}The uniform bound of the second term on the righthand side is obvious. We split the first term into two integral domain to estimate it. In $|z-x|\leq \frac d2$ or $|z-y|\leq \frac d2$, we have $e^{w_{2n}(z)}(1-r_n^2 e^{w_{1n}(z)})\rightarrow 0$ uniformly, therefore
$$\int_{\{|z-x|\leq \frac d2\}\cup\{|z-y|\leq \frac d2\}}\ln \frac{|x- z|}{|y- z|}e^{w_{2n}(z)}(1-r_n^2 e^{w_{1n}(z)})dz\rightarrow 0.$$
In $\mathbb R^2\backslash (\{|z-x|\leq \frac d2\}\cup\{|z-y|\leq \frac d2)$, we have if $|z|\geq 2\max (|x|,|y|)$, then $\frac 13\leq \frac{|x-z|}{|y-z|}\leq 3$; if $|z|\leq 2\max (|x|,|y|)$, then $\frac {d}{6|y|}\leq\frac{|x-z|}{|y-z|}\leq \frac{6|x|}{d}$. Therefore, we have
$$\int_{\mathbb R^2\backslash(\{|z-x|\leq \frac d2\}\cup\{|z-y|\leq \frac d2\})}\ln \frac{|x- z|}{|y- z|}e^{w_{2n}(z)}(1-r_n^2 e^{w_{1n}(z)})dz=O(1).$$
\par Step 4. $M_p=M_q,N_p=N_q$ holds for all $p,q\in S$. Let $q_0$ be the point as in Step 1, i.e. $M_{q_0}N_{q_0}>2(M_{q_0}+N_{q_0})$. If $M_pN_p>2(M_p+N_p)$, then we have by Step 2, $w_{kn}(q_{0,kn}),w_{kn}(p_{kn})=-2\ln r_n+O(1),k=1,2$. Picking up $ x\in \partial B_d(q),y\in \partial B_d(p)$, we have by Lemma \ref{lemsimple3}
\begin{eqnarray*}
w_{1n}(x)-w_{1n}(y)=(1-\frac{M_{q_0,n}}{2})(-\ln r_n^2)-(1-\frac{M_{p,n}}{2})(-\ln r_n^2)+O(1).
\end{eqnarray*}By Step 3, we must have $M_p=M_q$. Also we can get $N_p=N_q$ by investigating $w_{2n}(x)-w_{2n}(y)$. If $M_pN_p=2(M_p+N_p)$, then without loss of generality, we may assume $M_p<M_{q_0}$. By Lemma \ref{lemsimple3} and Step 1, we also have
\begin{eqnarray*}
w_{1n}(x)-w_{1n}(y)\leq C+(M_{q_0,n}-M_{p,n})\ln r_n\rightarrow -\infty
\end{eqnarray*}contradicts to Step 3. This finishes the proof.
\end{proof}
By Lemma \ref{lemsame}, we now denote $M,N$ the mass at  $q\in S$ instead of $M_q,N_q$ respectively.
In the following two lemmas, we show the concentration may occur only for special values of $\alpha_1,\alpha_2$. The following lemma shows that there is no concentration at the origin.
\begin{lemma}
\label{lemorigin}For each constant $0<s<\min_{q\in S}|q|$, we have
\begin{equation}\label{origin}
\lim_{n\rightarrow\infty}\left[\int_{|x|\leq s}e^{w_{1n}}(1-r_n^2 e^{w_{2n}})dx+\int_{|x|\leq s}e^{w_{2n}}(1-r_n^2 e^{w_{1n}})dx\right]=0
\end{equation}  and $w_{1n}$, $w_{2n}\rightarrow-\infty$ uniformly on $B_s(0)$. Moreover,  $|S|\geq 2$, $MN(|S|-1)=2NN_1+2MN_2$.
\end{lemma}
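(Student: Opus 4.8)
The statement packages three facts — the vanishing \eqref{origin} of the two reaction masses on $B_s(0)$ together with $w_{1n},w_{2n}\to-\infty$ there, the bound $|S|\ge2$, and the identity $MN(|S|-1)=2NN_1+2MN_2$ — and I would prove them in the order (origin)$\Rightarrow$(identity)$\Rightarrow$($|S|\ge2$), the identity carrying the real content and $|S|\ge2$ being a one-line corollary. Throughout write $R_1=e^{w_{2n}}(1-r_n^2e^{w_{1n}})$ and $R_2=e^{w_{1n}}(1-r_n^2e^{w_{2n}})$ for the two reaction densities.

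For the first part the plan is to desingularise and then invoke Theorem \ref{thmBM1}. Writing $w_{in}=g_{in}+\tilde v_{in}$ with $g_{in}=2\sum_j\ln|x-r_n\epsilon_n p_{ij}|$, the smooth fields solve the source-free system $\Delta\tilde v_{1n}+\tilde V_{2n}e^{\tilde v_{2n}}=0$, $\Delta\tilde v_{2n}+\tilde V_{1n}e^{\tilde v_{1n}}=0$ with weights $\tilde V_{jn}=e^{g_{jn}}(1-\cdots)\to|x|^{2N_j}(1-\cdots)$ that are uniformly bounded on $B_s(0)$ and \emph{degenerate} at the origin. On any punctured compact subset of $B_s(0)\backslash\{0\}$ the integrability and boundedness hypotheses of Theorem \ref{thmBM1} hold and $S$ does not meet $B_s(0)$, so alternative (i) applies; since $\tilde v_{in}=w_{in}-g_{in}\to-\infty$ on the annulus $\{\delta\le|x|\le s\}$ (where $g_{in}$ is bounded and $w_{in}\to-\infty$ by the off-$S$ divergence already established), the bounded alternative is ruled out and $\tilde v_{in}\to-\infty$ locally uniformly on $B_s(0)\backslash\{0\}$. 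It then remains to rule out a bubble sitting in the vortex core at the origin; this I would do by rescaling at the collapse scale of the $p_{ij}$, extracting an entire solution of the \emph{singular} Liouville system with weights $|x|^{2N_j}$, and observing that the degeneracy of these weights raises the Brezis--Merle threshold so that no concentration is compatible with the finite mass $\int_{B_s}R_i$. Once $w_{1n},w_{2n}\to-\infty$ uniformly on all of $B_s(0)$, \eqref{origin} is immediate from $0\le R_1+R_2\le e^{w_{1n}}+e^{w_{2n}}\to0$ on the bounded set $B_s(0)$.

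For the identity I would run the Pohozaev identity of Lemma \ref{lemPI}, but centred at the origin and applied to the rescaled system \eqref{BL-1} on a fixed ball $B_\rho$ with $\rho>\max_{q\in S}|q|$, letting $n\to\infty$ and then $\rho\to\infty$. By the first part together with Lemma \ref{lemsame}, all reaction mass inside $B_\rho$ concentrates on $S$ with totals $2\pi M|S|$ and $2\pi N|S|$, and the boundary reaction terms vanish because $w_{in}\to-\infty$ on $\partial B_\rho$. The bulk term $-2\int_{B_\rho}(e^{w_{1n}}+e^{w_{2n}}-r_n^2e^{w_{1n}+w_{2n}})$ is computed by summing \eqref{BL-2} over $q\in S$, the three pieces collapsing to $-2\pi MN|S|$. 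The boundary gradient term is quadratic in the \emph{smooth} parts $\tilde v_{in}$, whose effective degrees at infinity are $-M|S|$ and $-N|S|$ (since $\tilde v_{1n}\to-M\sum_q\ln|x-q|$, $\tilde v_{2n}\to-N\sum_q\ln|x-q|$ away from $S$), so it tends to $-2\pi(M|S|)(N|S|)=-2\pi MN|S|^2$. Finally the vortex terms $\int_{B_\rho}(x\cdot\nabla g_{1n})\Delta\tilde v_{2n}+\int_{B_\rho}(x\cdot\nabla g_{2n})\Delta\tilde v_{1n}$ tend to $-4\pi N_1N|S|-4\pi N_2M|S|$, using $x\cdot\nabla g_{in}\to2N_i$ at the points of $S$ and $\Delta\tilde v_{1n}\to-2\pi M\sum_q\delta_q$, $\Delta\tilde v_{2n}\to-2\pi N\sum_q\delta_q$; crucially the residual terms $-4\pi\sum_j(r_n\epsilon_n p_{1j})\cdot\nabla\tilde v_{2n}(r_n\epsilon_n p_{1j})$ and their analogue drop out because $\nabla\tilde v_{in}$ is smooth at the collapsing vortices and $r_n\epsilon_n p_{ij}\to0$. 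Equating gradient term with bulk plus vortex and dividing by $-2\pi|S|$ gives $MN|S|=MN+2N_1N+2N_2M$, that is $MN(|S|-1)=2NN_1+2MN_2$. Then $|S|\ge2$ follows at once, for $M,N>2$ by \eqref{BL-2} and $\max(N_1,N_2)\ge1$ force the right-hand side to be strictly positive.

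The step I expect to be the genuine obstacle is the exclusion of a bubble in the vortex core at the origin in the first part: the source-free Brezis--Merle machinery of Theorem \ref{thmBM1} does not survive across the $N_1+N_2$ collapsing $\delta$-sources, so one must carry out a dedicated blow-up at the vortex scale and quantify, through a local Pohozaev identity for the singular limit system, that the degenerate weights $|x|^{2N_j}$ preclude concentration. Establishing en route that the escaping far-field mass contributes nothing to $\nabla\tilde v_{in}$ on the fixed circle $\partial B_\rho$ (needed for the boundary gradient term) is the secondary technical point, and is again supplied by the Green representation.
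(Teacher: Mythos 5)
There is a genuine gap, and it sits exactly where you predicted: the exclusion of concentration at the origin. Your proposed mechanism --- rescale at the vortex-collapse scale, extract an entire solution of the singular system with weights $|x|^{2N_j}$, and argue that ``the degeneracy of these weights raises the Brezis--Merle threshold so that no concentration is compatible with the finite mass'' --- does not work. A raised threshold is a \emph{necessary} condition for concentration, not an obstruction to it: it only says that a putative concentration at $0$ must carry mass at least $4\pi(1+N_j)$, and the total available mass $4\pi(\beta_i+N_i)$ exceeds this whenever $\beta_i>1$, which is always the case here. A local Pohozaev identity on $B_s(0)$ for the singular limit likewise cannot preclude concentration by itself; in the paper it yields only the inequality $M_0N_0\geq 2(N_2M_0+N_1N_0+M_0+N_0)$, hence the lower bounds $M_0>2(N_1+1)$ and $N_0>2(N_2+1)$ for any nonzero concentration $(2\pi M_0,2\pi N_0)$ at the origin --- perfectly consistent with a raised threshold. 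The actual contradiction in the paper comes from pairing this with a \emph{second} Pohozaev identity on $\Omega=\cup_{q\in S}B_r(q)$, which (using the limits \eqref{BL-2} and the logarithmic profiles of $\tilde w_{in}$ away from $S\cup\{0\}$, including the $-M_0\ln|x|$, $-N_0\ln|x|$ terms contributed by the origin) gives the exact balance $MN(|S|-1)+MN_0+NM_0=2NN_1+2MN_2$. Since $MN_0+NM_0>2M(N_2+1)+2N(N_1+1)>2MN_2+2NN_1$ under the lower bounds above, this forces $M_0=N_0=0$. This two-region interplay is the missing idea in your proposal; without it the first part is not established, and your derivation of the identity (which assumes all mass concentrates on $S$ and that the boundary gradient term sees only the degrees $-M|S|$, $-N|S|$ rather than $-(M_0+M|S|)$, $-(N_0+N|S|)$) is circular.

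Two smaller points. First, once $M_0=N_0=0$ is known, the uniform divergence $w_{in}\to-\infty$ on all of $B_s(0)$ is still not ``immediate'': near the origin one must control the additive constant in the Green representation, and the paper does this by showing $C_{1n}=(M_{q,n}+2N_1+o(1))\ln r_n+O(1)$ via Lemma \ref{lemsimple3}, so that $w_{1n}=f_{1,r_n\epsilon_n}+(M_{q,n}-2+o(1))\ln r_n+O(1)$ with $M>2$; your ordering (divergence first, then \eqref{origin}) reverses the actual logical dependence. Second, your computation of the identity itself --- boundary gradient term $-2\pi MN|S|^2$, bulk term $-2\pi MN|S|$ from summing \eqref{BL-2}, vortex terms $-4\pi N_1N|S|-4\pi N_2M|S|$, and the conclusion $MN(|S|-1)=2NN_1+2MN_2$ with $|S|\geq2$ --- is correct and matches the paper's computation on $\cup_{q\in S}B_r(q)$ once the first part is in place.
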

\begin{proof}
Consider a small number $s< \min_{q\in S} |q|$. Suppose that
$$2\pi M_0=\lim_{n\rightarrow \infty}\int_{|x|\leq s}e^{w_{2n}}(1-r_n^2 e^{w_{1n}})dx>0,2\pi N_0=\lim_{n\rightarrow \infty}\int_{|x|\leq s}e^{w_{1n}}(1-r_n^2 e^{w_{2n}})dx.$$ Set $$\tilde w_{in}(x)=w_{in}(x)-f_{i,r_n\epsilon_n},f_{i,r_n\epsilon_n}(x)=2\sum_{j=1}^{N_i}\ln |x-r_n\epsilon_n p_{ij}|,i=1,2.$$
By previous discussion, we have $$\tilde w_{in}(x)\rightarrow -\infty \text{ uniformly for any }K\subset\subset \mathbb R^2\backslash(S\cup\{0\}).$$ By the Green's representation formula for $\tilde w_{in}$ and the standard elliptic estimates, one can get
\begin{equation*}
\begin{split}
&\tilde w_{1n}(x)-c_{1n}\rightarrow -M_0\ln |x|-\sum_{q\in S}M\ln |x-q|,\\
&\tilde w_{2n}(x)-c_{2n}\rightarrow -N_0\ln |x|-\sum_{q\in S}N\ln |x-q|,
\end{split}
\end{equation*} in $C^2_{loc}(\mathbb R^2\backslash(S\cup\{0\}))$. Recall the Pohozaev-type identity as in Lemma \ref{lemPI},
\begin{equation*}
\begin{split}
&\int_{\partial \Omega}(x\cdot \nu)(\nabla \tilde w_{1n}\cdot\nabla \tilde w_{2n})dS-\int_{\partial \Omega}(x\cdot\nabla \tilde w_{1n})(\nu\cdot\nabla \tilde w_{2n})dS-\int_{\partial \Omega}(x\cdot\nabla \tilde w_{2n} )(\nabla \tilde w_{1n}\cdot\nu)dS\\
=&\int_{\partial \Omega}(x\cdot\nu)(e^{w_{1n}}+e^{w_{2n}}-r_n^2e^{w_{1n}+w_{2n}})dS-2\int_{\Omega}(e^{w_{1n}}+e^{w_{2n}}-r_n^2e^{w_{1n}+w_{2n}})dx\\
-&\int_{\Omega}(x\cdot \nabla f_{1,r_n\epsilon_n})(e^{w_{1n}}-r_n^2e^{w_{1n}+w_{2n}})dx-\int_{\Omega}(x\cdot\nabla f_{2,r_n\epsilon_n})(e^{w_{2n}}-r_n^2e^{w_{1n}+w_{2n}})dx.
\end{split}
\end{equation*}Set
\begin{equation*}
\begin{split}
& I_n=\int_{\partial \Omega}(x\cdot \nu)(\nabla \tilde w_{1n}\cdot\nabla \tilde w_{2n})dS-\int_{\partial \Omega}(x\cdot\nabla \tilde w_{1n})(\nu\cdot\nabla \tilde w_{2n})dS-\int_{\partial \Omega}(x\cdot\nabla \tilde w_{2n} )(\nabla \tilde w_{1n}\cdot\nu)dS\\
& J_{1n}=\int_{\Omega}(x\cdot \nabla f_{1,r_n\epsilon_n})(e^{w_{1n}}-r_n^2e^{w_{1n}+w_{2n}})dx,J_{2n}=\int_{\Omega}(x\cdot\nabla f_{2,r_n\epsilon_n})(e^{w_{2n}}-r_n^2e^{w_{1n}+w_{2n}})dx.
\end{split}
\end{equation*}
First we take  $\Omega=\{|x|\leq s\}$, then
$$I_n=-2\pi M_0N_0+o(1)$$ as $n\rightarrow\infty$. Moreover,
\begin{equation*}
\begin{split}
J_{1n}=&4\pi N_1N_0+\sum_{i=1}^{N_1}\int_{|x|\leq s/r_{n}}\frac{2(x-\epsilon_n p_{1i})\cdot\epsilon_n p_{1i}}{|x-\epsilon_n p_{1i}|^2}e^{u_{1n}}(1-e^{u_{2n}})dx+o(1)\\
=&4\pi N_1N_0+ o(1)\int_{|x|\leq R}\frac{2(x-\epsilon_n p_{1i})\cdot\epsilon_n p_{1i}}{|x-\epsilon_n p_{1i}|^2}dx+R^{-1}\int_{|x|\geq R}e^{u_{1n}}(1-e^{u_{2n}})dx+o(1)\\
=&4\pi N_1N_0+o(1)
\end{split}
\end{equation*}as $n\rightarrow \infty$ and then $R\rightarrow \infty$. Also we have $J_{2n}=4\pi N_2M_0+o(1)$. By the above estimates, we have by Pohozaev's identity,
\begin{equation*}
-2\pi M_0N_0=-4\pi N_2M_0-4\pi N_1N_0-4\pi M_0-4\pi N_0-2\int_{|x|\leq s}r_n^2e^{w_{1n}+w_{2n}}dx+o(1).
\end{equation*}Therefore we have
\begin{equation}
\label{origin1}
M_0N_0\geq 2(N_2M_0+N_1N_0+M_0+N_0).
\end{equation}From \eqref{origin1}, we get $N_0>0$. Next we take $\Omega=\cup_{q\in S}B_r(q), r$ small enough. Note that
\begin{equation*}
\begin{split}
&\nabla \tilde w_{1n}\rightarrow\nabla \tilde w_{1}= -M\frac{x-q}{|x-q|^2}+\nabla H_{1q}(x),\nabla \tilde w_{2n}\rightarrow \nabla \tilde w_2=-N\frac{x-q}{|x-q|^2}+\nabla H_{2q}(x),\\
&\nabla H_{1q}(x)=-M_0\frac{x}{|x|^2}-\sum_{p\in S\backslash\{q\}}M\frac{x-p}{|x-p|^2},\nabla H_{2q}(x)=-N_0\frac{x}{|x|^2}-\sum_{p\in S\backslash\{q\}}N\frac{x-p}{|x-p|^2}.
\end{split}
\end{equation*}As $n\rightarrow \infty$, we have
\begin{equation*}
\begin{split}
I_n\rightarrow &\int_{\partial \Omega}(x\cdot \nu)(\nabla \tilde w_{1}\cdot\nabla \tilde w_{2})dS-\int_{\partial \Omega}(x\cdot\nabla \tilde w_{1})(\nu\cdot\nabla \tilde w_{2})dS-\int_{\partial \Omega}(x\cdot\nabla \tilde w_{2} )(\nabla \tilde w_{1}\cdot\nu)dS\\
=& -2\pi MN|S|+\sum_{q\in S}\int_{|x-q|=r}\frac{N}{r}q\cdot \nabla H_{1q}+\frac Mr q\cdot\nabla H_{2q} dS +O(r)\\
=& -2\pi MN|S|-2\pi NM_0|S|-2\pi M N_0|S|-\sum_{q\in S}\sum_{p\neq q}4\pi MN\frac{q\cdot(q-p)}{|q-p|^2}+O(r)\\
=&-2\pi|S|(MN|S|+NM_0+MN_0)+O(r).
\end{split}
\end{equation*}
From \eqref{BL-2}, we have
\begin{equation*}
\begin{split}
&2\int_{\Omega}(e^{w_{1n}}+e^{w_{2n}}-r_n^2e^{w_{1n}+w_{2n}})dx
\\=&4\pi |S|M+4\pi |S|N+2\pi|S|(MN-2M-2N)+o(1)
\end{split}
\end{equation*}
Moreover, we have
\begin{equation*}
\begin{split}
J_{1n}=&4\pi N_1N|S|+\sum_{i=1}^{N_1}\int_\Omega \frac{2r_n\epsilon_n p_{1i}\cdot(x-r_n\epsilon_n p_{1i})}{|x-r_n\epsilon_n p_{1i}|^2}(e^{w_{1n}}-r_n^2e^{w_{1n}+w_{2n}})dx\\
=&4\pi NN_1|S|+o(1).
\end{split}
\end{equation*}Also we have $J_{2n}=4\pi MN_2|S|+o(1)$. Combining the above estimates, the Pohozaev's identity implies that
\begin{equation}
\label{origin2}
MN(|S|-1)+MN_0+NM_0=2NN_1+2MN_2.
\end{equation}
By \eqref{origin1}, one has
$$M_0N_0>2N_2M_0+2M_0\Rightarrow N_0>2(N_2+1).$$
The same arguments also yield $M_0>2(N_1+1)$. Substituting $M_0,N_0$ by these two inequalities we get
$$MN_0+NM_0>2(N_2+1)M+2(N_1+1)N$$ contradicts to \eqref{origin2}. We must have $M_0=N_0=0$ which implies that
\begin{equation*}
MN(|S|-1)=2NN_1+2MN_2.
\end{equation*} This also implies $|S|\geq 2$.
\par The last thing we shall prove is $w_{in}\rightarrow -\infty$ uniformly on $B_s(0)$ for $i=1,2$. By Green's representation formula for $u_{1n}$, we have, for $x\leq \frac{s}{2r_n}$,
\begin{equation*}
\begin{split}
u_{1n}(x)=f_{1n}(x)+C_{1n}+\frac 1{2\pi}\int_{|y|\leq \frac{s}{r_n}}\ln\frac{|y|}{|x-y|}e^{u_{2n}}(1-e^{u_{1n}})dy +O(1).
\end{split}
\end{equation*}
Standard arguments(\cite{BartolucciChenLinTarantello2004}) shows that
$$u_{1n}(x)-f_{1n}(x)-C_{1n}=o(1)\ln r_n+O(1), \quad \text{for }|x|\leq \frac{s}{2r_n}.$$
Then it follows from Lemma \ref{lemsimple3} that $C_{1n}=(M_{q,n}+2N_1+o(1))\ln r_n+O(1)$, and hence, $w_{1n}=f_{1,r_n\epsilon_n}(x)+(M_{q,n}-2+o(1))\ln r_n+O(1)$ for $|x|\leq \frac s2$. This ends the proof of present lemma.
\end{proof}
\begin{remark}
Even when all $M_q$'s and $N_q$'s are not the same, it is still true that there is no concentration at origin by repeating the computation in Lemma \ref{lemorigin}.
\end{remark}
Next lemma shows that there is no concentration at $\infty$.
\begin{lemma}\label{leminfty}
For any $R>\max_{q\in S}|q|$, $w_{in}\rightarrow -\infty$ uniformly in $|x|\geq R$ and
$$\lim_{n\rightarrow \infty}\int_{|x|\geq R}e^{w_{1n}}(1-r_n^2 e^{w_{2n}})dx+\int_{|x|\geq R}e^{w_{2n}}(1-r_n^2 e^{w_{1n}})dx=0.$$
Moreover, we have $MN(|S|+1)=2\beta_1N+2\beta_2M$.
\end{lemma}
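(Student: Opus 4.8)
The plan is to transplant the argument of Lemma \ref{lemorigin} from the origin to infinity, the substitution being that the local vortex numbers $N_i$ get replaced by the prescribed decay rates $\beta_i$. First I would record conservation of mass: changing variables $y=x/r_n$ in the global identities of Lemma \ref{lemPI} gives
$$\int_{\mathbb R^2}e^{w_{2n}}(1-r_n^2e^{w_{1n}})\,dx=4\pi(\beta_1+N_1),\qquad \int_{\mathbb R^2}e^{w_{1n}}(1-r_n^2e^{w_{2n}})\,dx=4\pi(\beta_2+N_2),$$
with positive integrands since $u_{in}<0$. Setting $2\pi M_\infty=\lim_n\int_{|x|\ge R}e^{w_{2n}}(1-r_n^2e^{w_{1n}})\,dx$ and $2\pi N_\infty=\lim_n\int_{|x|\ge R}e^{w_{1n}}(1-r_n^2e^{w_{2n}})\,dx$, and decomposing $\mathbb R^2$ into $B_s(0)$, the balls $B_d(q)$ ($q\in S$), the compact middle region, and $\{|x|\ge R\}$, Lemma \ref{lemorigin} (no mass at $0$), the blow-up analysis (mass $2\pi M$, $2\pi N$ at each $q$), and $w_{in}\to-\infty$ on compact subsets of $\mathbb R^2\setminus(S\cup\{0\})$ yield $M_\infty=2(\beta_1+N_1)-M|S|\ge 0$ and $N_\infty=2(\beta_2+N_2)-N|S|\ge 0$.

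To detect concentration at infinity I would use the Kelvin transform $\hat w_{in}(y)=w_{in}(y/|y|^2)-2\beta_i\ln|y|$, under which $\{|x|\ge R\}$ becomes the punctured disk $\{0<|y|\le 1/R\}$, the system \eqref{BL-1} (the vortices having moved to $y=\infty$) turns into the weighted system
$$\Delta\hat w_{1n}+|y|^{2\beta_2-4}e^{\hat w_{2n}}(1-r_n^2|y|^{2\beta_1}e^{\hat w_{1n}})=0,\qquad \Delta\hat w_{2n}+|y|^{2\beta_1-4}e^{\hat w_{1n}}(1-r_n^2|y|^{2\beta_2}e^{\hat w_{2n}})=0,$$
and the mass over $\{|x|\ge R\}$ equals the weighted mass of $\hat w$ over $\{|y|\le 1/R\}$. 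Since $S$ is a finite set of nonzero points, its Kelvin image stays away from $y=0$, so $y=0$ is the only possible concentration point there. Extracting the profile of $\hat w_{in}$ near $y=0$ from the Green representation exactly as in Lemma \ref{lemorigin} and running the Pohozaev computation over $\{|y|\le\rho\}$—with the weight exponent $2\beta_i-4$ now playing the role that $x\cdot\nabla f_{i,r_n\epsilon_n}=2N_i$ played there, i.e. $N_i$ replaced by $\beta_i-2$—produces the exact analogue of \eqref{origin1},
$$M_\infty N_\infty\ge 2\big((\beta_2-1)M_\infty+(\beta_1-1)N_\infty\big).$$

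Inserting the conservation formulas, this inequality becomes $\big(2(N_1+1)-M|S|\big)\big(2(N_2+1)-N|S|\big)\ge 4(\beta_1-1)(\beta_2-1)$. If $M_\infty,N_\infty$ were not both zero, then (as in Lemma \ref{lemorigin}) both are positive, the two factors on the left equal $M_\infty-2(\beta_1-1)>0$ and $N_\infty-2(\beta_2-1)>0$, and hence are strictly smaller than $2(N_1+1)$ and $2(N_2+1)$; this forces $(\beta_1-1)(\beta_2-1)<(N_1+1)(N_2+1)$, contradicting \eqref{CS-3}. Therefore $M_\infty=N_\infty=0$, which is the vanishing of the stated integral and, together with $w_{in}\to-\infty$ on the compact annuli $\{R\le|x|\le R'\}$, gives $w_{in}\to-\infty$ uniformly on $\{|x|\ge R\}$. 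Finally, $M|S|=2(\beta_1+N_1)$ and $N|S|=2(\beta_2+N_2)$, so substituting $2N_1=M|S|-2\beta_1$ and $2N_2=N|S|-2\beta_2$ into the relation $MN(|S|-1)=2NN_1+2MN_2$ of Lemma \ref{lemorigin} (i.e. \eqref{origin2} with $M_0=N_0=0$) and simplifying yields $MN(|S|+1)=2\beta_1N+2\beta_2M$.

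The main obstacle I anticipate is making this exterior (Kelvin) Pohozaev rigorous: one must control the boundary contributions using the precise decay $w_{in}(x)\sim-2\beta_i\ln|x|$ uniformly in $n$ (this is exactly what turns $N_i$ into $\beta_i$), pin down the profile of $\hat w_{in}$ near $y=0$ through the representation formula as in the simple blow-up lemmas above, and rule out any loss of mass at the intermediate scales between the fixed annulus and the scale $1/r_n$. Once the displayed inequality is secured, the contradiction with \eqref{CS-3} and the subsequent mass bookkeeping are elementary.
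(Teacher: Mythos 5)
Your overall architecture (Kelvin transform, mass bookkeeping, a quadratic inequality governing the mass at the Kelvin origin, contradiction with \eqref{CS-3}) is close in spirit to the paper's, and your final derivation of $MN(|S|+1)=2\beta_1N+2\beta_2M$ from \eqref{origin2} and the total-mass identities is correct. But the linchpin of your argument --- the inequality $M_\infty N_\infty\ge 2\bigl((\beta_2-1)M_\infty+(\beta_1-1)N_\infty\bigr)$, which you claim follows by ``running the Pohozaev computation over $\{|y|\le\rho\}$'' with $N_i$ replaced by $\beta_i-2$ --- does not come out of that computation; the computation gives the \emph{reverse} inequality. The reason is structural: in Lemma \ref{lemorigin} the cross term carries the weight $e^{f_{1,r_n\epsilon_n}+f_{2,r_n\epsilon_n}}\sim|x|^{2N_1+2N_2}$, i.e.\ the product of the two individual weights, whereas after the Kelvin transform the cross term is $r_n^2|y|^{2\beta_1+2\beta_2-4}e^{\varphi_{1n}+\varphi_{2n}}$, whose exponent exceeds the sum $(2\beta_1-4)+(2\beta_2-4)$ of the individual weight exponents by $4$ (the Jacobian $|y|^{-4}$ enters once, not twice). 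Tracking this through the dilation identity, the coefficient of $P_n=\int_{B_\rho}r_n^2|y|^{2\beta_1+2\beta_2-4}e^{\varphi_{1n}+\varphi_{2n}}\,dy$ flips sign relative to the computation behind \eqref{origin1}, and one obtains $L_1L_2\le 2(\beta_2-1)L_1+2(\beta_1-1)L_2$ --- exactly what the paper's displayed Pohozaev identity in this lemma yields when applied with $\Omega=B_\rho(0)$ --- which is useless for the contradiction you want to draw.

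The inequality you need (the paper's \eqref{infty1}) is true, but only in one of two cases and by a different mechanism: one rescales at the concentration point, passes to an entire limiting system $-\Delta\bar\varphi_1=c_0|x|^{2\beta_2-4}e^{\bar\varphi_2}$, $-\Delta\bar\varphi_2=|x|^{2\beta_1-4}e^{\bar\varphi_1}$, and uses the global Pohozaev identity $A_1A_2=2(\beta_2-1)A_1+2(\beta_1-1)A_2$ for that limit together with $A_i>2(\beta_i-1)$ and $L_i\ge A_i$. This works only when the maximum of $\varphi_{1n}$ lives at the natural scale $t_n=e^{-\lambda_{1n}/(2\beta_1-2)}$ (the paper's Case 1). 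When instead $|y_{1n}|/t_n\to\infty$ (Case 2), the concentration sits at an intermediate scale between $1$ and $1/r_n$, the weighted limit system is not the correct model and \eqref{infty1} is unavailable; the paper disposes of this case by a separate argument, showing that a ball of radius $d/|y_{1n}|\to\infty$ in the $w$-variable would simultaneously have to carry mass at least $2\pi|S|M$ and be $o(1)$. You flag ``loss of mass at intermediate scales'' as an anticipated obstacle, but as written your proof neither establishes its key inequality nor addresses this case, so there is a genuine gap at the central step.
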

\begin{proof}
Set
$$\varphi_{in}(x)=w_{in}\left(\frac x{|x|^2}\right)-2\beta_i\ln |x|,i=1,2.$$
Then $\varphi_{in}$ satisfies
\begin{equation*}
\begin{cases}
&-\Delta \varphi_{1n}=|x|^{2\beta_2-4}e^{\varphi_{2n}}(1-r_n^2|x|^{2\beta_1}e^{\varphi_{1n}}),\\
&-\Delta \varphi_{2n}=|x|^{2\beta_1-4}e^{\varphi_{1n}}(1-r_n^2|x|^{2\beta_2}e^{\varphi_{2n}}),
\end{cases}\text{ in } |x|<R
\end{equation*}for $R<(\max_{q\in S}|q|)^{-1}$. Denote $\tilde S=\{\frac q{|q|^2}|q\in S\}$. Then by previous arguments, we have $\varphi_{in}\rightarrow-\infty,i=1,2$ uniformly for any compact set $K\subset\mathbb R^2\backslash (\tilde S\cup \{0\})$.

For  $0<s<\frac 14(\max_{q\in S} |q|)^{-1}$, set
\begin{eqnarray*}&&\lim_{n\rightarrow\infty}\inf\int_{|x|\leq s}|x|^{2\beta_2-4}e^{\varphi_{2n}}(1-r_n^2|x|^{2\beta_1}e^{\varphi_{1n}})dx=2\pi L_1,\\
&&\lim_{n\rightarrow\infty}\inf\int_{|x|\leq s}|x|^{2\beta_1-4}e^{\varphi_{1n}}(1-r_n^2|x|^{2\beta_2}e^{\varphi_{2n}})dx=2\pi L_2.
 \end{eqnarray*}
We need to show $\varphi_{in}\rightarrow -\infty$ uniformly in $B_s$. If not, we may assume $\max_{|x|\leq s}\varphi_{2n}\geq -C$. Then we must have $\max_{|x|\leq s} (\varphi_{1n},\varphi_{2n})\rightarrow +\infty$. Otherwise, as $\beta_1>1$, we will have by Harnack inequality that $\varphi_{2n}$ is uniformly bounded in $B_s$ which contradicts to $\varphi_{2n}\rightarrow -\infty$ on $\partial B_s$. Denote $\lambda_{in}=\varphi_{in}(y_{in})=\max_{|x|\leq s}\varphi_{in}$. Set
$$t_n=\min\{e^{-\frac{\lambda_{1n}}{2\beta_1-2}},e^{-\frac{\lambda_{2n}}{2\beta_2-2}}\}=e^{-\frac{\lambda_{1n}}{2\beta_1-2}}\rightarrow 0.$$
Then we have $y_{1n}\rightarrow 0$ and $\lambda_{1n}\rightarrow +\infty$. Next, we shall discuss in two cases:
\begin{itemize}
\item[Case 1.] $\frac{|y_{1n}|}{t_n}\leq C$. Set
$$\bar \varphi_{in}(x)=\varphi_{in}(t_n  x)-\lambda_{in},i=1,2.$$
Then we have $\bar \varphi_{in}\leq 0,i=1,2.$ and
\begin{equation*}
\begin{cases}
\displaystyle -\Delta \bar \varphi_{1n}=e^{\lambda_{2n}+2(\beta_2-1)\ln t_n}|x|^{2\beta_2-4}e^{\bar \varphi_{2n}}(1-r_n^2t_n^2|x|^{2\beta_1}e^{\bar \varphi_{1n}}),\\
\displaystyle-\Delta \bar \varphi_{2n}=|x|^{2\beta_1-4}e^{\bar \varphi_{1n}}(1-r_n^2t_n^{2\beta_2}e^{\lambda_{2n}}|x|^{2\beta_2}e^{\bar \varphi_{2n}}),
\end{cases}\text{ in } |x|\leq s/t_n.
\end{equation*}Also we have $\bar \varphi_{1n}(x)\leq \bar\varphi_{1n}(\frac{y_{1n}}{t_n})=0$. Then by standard elliptic estimates and $\lambda_{2n}+2(\beta_2-1)\ln t_n\leq 0$, we have $\bar \varphi_{1n}\rightarrow \varphi_1$ in $W^{2,\gamma}_{loc}(\mathbb R^2)$ for some $\gamma>1$. As for $\bar \varphi_{2n}$, by Harnack inequality, one gets either $\bar\varphi_{2n}\rightarrow -\infty$ locally uniformly in $\mathbb R^2$ or $\bar\varphi_{2n}\rightarrow \bar \varphi_2$ in $W^{2,\gamma}_{loc}(\mathbb R^2)$. Following the same arguments as in Lemma \ref{lembl1}, we can exclude the previous case. In fact, by the same arguments, we also have $\lambda_{2n}+2(\beta_2-1)\ln t_n\geq -C$, otherwise,
\begin{equation*}
\Delta \bar \varphi_1=0,\bar \varphi_1(x_0)=0, x_0=\lim_{n\rightarrow \infty}\frac{y_{1n}}{t_n}\Rightarrow \bar \varphi_1\equiv 0
\end{equation*}which contradicts to $\int_{\mathbb R^2}|x|^{2\beta_1-4}e^{\bar \varphi_1}dx<+\infty$ by Fatou's lemma. And $\bar \varphi_1,\bar \varphi_2$ satisfy
\begin{equation*}
\begin{cases}
&-\Delta \bar\varphi_1=c_0|x|^{2\beta_2-4}e^{\bar \varphi_2},\\
&-\Delta \bar \varphi_2=|x|^{2\beta_1-4}e^{\bar \varphi_1},
\end{cases}\text{ in }\mathbb R^2
\end{equation*}with $|x|^{2\beta_1-4}e^{\bar\varphi_1},|x|^{2\beta_2-4}e^{\bar\varphi_2}\in L^1(\mathbb R^2)$ for some constant $0<c_0\leq 1$. Set
\begin{equation*}
A_1=\frac1{2\pi}\int_{\mathbb R^2}c_0|x|^{2\beta_2-4}e^{\bar \varphi_2}dx,A_2=\frac1{2\pi}\int_{\mathbb R^2}|x|^{2\beta_1-4}e^{\bar \varphi_1}dx.
\end{equation*}By Pohozaev's identity and repeating the arguments in Lemma \ref{lemPI}, one gets
$$A_1A_2=2(\beta_2-1)A_1+2(\beta_1-1)A_2.$$Noting $L_1\geq A_1,L_2\geq A_2$, we have
\begin{equation}\label{infty1}
L_1L_2\geq 2(\beta_2-1)L_1+2(\beta_1-1)L_2.
\end{equation}
\item[Case 2.] $\frac{|y_{1n}|}{t_n}\rightarrow +\infty$. Recall the definition of $w_{in},i=1,2$ in \eqref{BL-4} and set
$$\bar w_{in}(x)=w_{in}(x/|y_{1n}|)-2\ln |y_{1n}|,i=1,2.$$
$\bar w_{in}$ should satisfy
\begin{equation*}
\begin{cases}
\Delta\bar  w_{1n}+e^{\bar w_{2n}}(1-r_n^2|y_{1n}|^2e^{\bar w_{1n}})=\displaystyle 4\pi\sum_{i=1}^{N_1}\delta_{r_n\epsilon_n|y_{1n}| p_{1i}},\\
\Delta\bar  w_{2n}+e^{\bar w_{1n}}(1-r_n^2|y_{1n}|^2e^{\bar w_{2n}})=\displaystyle 4\pi\sum_{i=1}^{N_2}\delta_{r_n\epsilon_n|y_{1n}| p_{2i}},
\end{cases}\text{ in }\mathbb R^2.
\end{equation*}
Notice that
\begin{equation*}
\begin{split}
\bar w_{1n}\left(\frac{y_{1n}}{|y_{1n}|}\right)
=\varphi_{1n}(y_{1n})+2(\beta_1-1)\ln |y_{1n}|
=2(\beta_1-1)\ln \frac{|y_{1n}|}{t_n}\rightarrow +\infty.
\end{split}
\end{equation*}
By Theorem \ref{thmBM1}, we know that $\bar w_{2n}(x)$ also blows up at $q^*=\lim_{n\rightarrow \infty}\frac{y_{1n}}{|y_{1n}|}\in \mathbb S^1$.
This implies that along a subsequence, $\bar w_{in}$ has a finite non-empty blow up set $S^*$ and the elements in $S^*$ are non-zero points.
\end{itemize}
Turning back to $\varphi_{in},i=1,2$ and following the same arguments as in Lemma \ref{lemorigin}, we may assume
\begin{equation*}
\begin{split}
& |x|^{2\beta_2-4}e^{\varphi_{2n}}(1-r_n^2|x|^{2\beta_1}e^{\varphi_{1n}})\rightarrow 2\pi L_1\delta_0+2\pi M\sum_{q\in\tilde S}\delta_q,\\
&|x|^{2\beta_1-4}e^{\varphi_{1n}}(1-r_n^2|x|^{2\beta_2}e^{\varphi_{2n}})\rightarrow 2\pi L_2\delta_0+2\pi N\sum_{q\in\tilde S}\delta_q
\end{split}
\end{equation*}
and  also
\begin{equation*}
\begin{split}
\varphi_{1n}(x)-c_{1n}\rightarrow -L_1\ln|x|-M\sum_{q\in \tilde S}\ln |x-q|,\varphi_{2n}(x)-c_{2n}\rightarrow -L_2\ln|x|-N\sum_{q\in \tilde S}\ln |x-q|
\end{split}
\end{equation*}
in $C^1_{loc}(\mathbb R^2\backslash(\tilde S\cup\{0\}))$ for some constants $c_{1n},c_{2n}\rightarrow -\infty$. Also we have the following Pohozaev's identity
\begin{equation*}
\begin{split}
&\int_{\partial \Omega}(x\cdot\nabla \varphi_{2n})(\nu\cdot\nabla \varphi_{1n})dS+\int_{\partial \Omega}(x\cdot\nabla \varphi_{1n})(\nu\cdot\nabla \varphi_{2n})dS-\int_{\partial \Omega}(x\cdot\nu )(\nabla \varphi_{2n}\cdot\nabla \varphi_{1n})dS+o(1)\\
=&2(\beta_2-1)\int_{\Omega}|x|^{2\beta_2-4}e^{\varphi_{2n}}dx+2(\beta_1-1)\int_{\Omega}|x|^{2\beta_1-4}e^{\varphi_{1n}}dx
-2(\beta_1+\beta_2-1)\times\\&
\int_{\Omega}r_n^2|x|^{2\beta_1+2\beta_2-4}e^{\varphi_{1n}+\varphi_{2n}}dx.
\end{split}
\end{equation*}
Repeating the arguments of Lemma \ref{lemorigin} in $\Omega=\cup_{q\in\tilde S}B_r(q)$, we obtain that
\begin{equation}\label{infty3}
MN(|S|+1)+NL_1+ML_2=2\beta_1N+2\beta_2M.
\end{equation}
If Case 1 happens, by \eqref{infty1}, we have $L_i\geq 2(\beta_i-1)$ which implies
\begin{equation*}
\begin{split}
MN(|S|+1)+NL_1+ML_2\geq 2\beta_1 N+2\beta_2 M+(MN-2M-2N).
\end{split}
\end{equation*}
This yields a contradiction to \eqref{infty3} as $MN-2M-2N>0$. Hence, we must have Case 2 happens. Then by the proof of Lemma \ref{lemorigin}, we have \begin{equation*}
o(1)=\int_{|x|\leq d}e^{\bar w_{2n}}(1-r_n^2|y_{1n}|^2e^{\bar w_{1n}})dx=\int_{|x|\leq d/|y_{1n}|}e^{w_{2n}}(1-r_n^2e^{w_{1n}})dx\geq 2\pi|S|M
\end{equation*}for some small $d>0$ which yields a contradiction. This implies that $L_1=L_2=0$
\begin{equation}\label{infty4}
MN(|S|+1)=2\beta_1N+2\beta_2M
\end{equation}and $\varphi_{in}\rightarrow -\infty$ uniformly in $B_s(0)$.
\end{proof}
Now Lemma \ref{lemorigin} and Lemma \ref{leminfty} tell us that there is no concentration of mass at 0 and $\infty$ for $w_{in},i=1,2.$ This implies that \begin{equation}
\label{totalmass}
|S|M=2(\beta_1+N_1),|S|N=2(\beta_2+N_2).
\end{equation}
Combining \eqref{origin2}, \eqref{infty4} and \eqref{totalmass} together, we can get
\begin{equation}
|S|=\frac{(\beta_1+N_1)(\beta_2+N_2)}{\beta_1\beta_2-N_1N_2}.
\end{equation}
This implies that
\begin{equation}
\label{number}
\frac{N_1}{\beta_1+N_1}+\frac{N_2}{\beta_2+N_2}=\frac{|S|-1}{|S|}\in \{\frac{k-1}{k},k=2,\cdots,\max(N_1,N_2)\}.
\end{equation}
From \eqref{CS-3}, we know that $\frac{N_1+1}{\beta_1+N_1}+\frac{N_2+1}{\beta_2+N_2}< 1$. Combining with \eqref{number}, we get $\frac 1{\beta_1+N_1}+\frac{1}{\beta_2+N_2}<\frac 1{k}$. Then we have
\begin{equation*}
\frac{k-1}{k}\leq \frac{\max(N_1,N_2)}{\beta_1+N_1}+\frac{\max(N_1,N_2)}{\beta_2+N_2}<\frac{\max(N_1,N_2)}{k}
\end{equation*}which implies $k\leq \max(N_1,N_2)$. Theorem \ref{mainthm2} follows from \eqref{number} immediately.

\section{The existence of non-topological solutions}

 Set $v_{i\epsilon}=u_{i\epsilon}-h_{i\epsilon}$, where $\displaystyle h_{i\epsilon}=2\sum_{j=1}^{N_i}\ln|x-\epsilon p_{ij}|-(N_i+\beta_i)\ln(1+|x|^2)$, $i=1,2$.
Then $v_{i\epsilon}$ satisfies that
\begin{equation}\label{Global1}
\begin{split}
 \Delta v_{1\epsilon}+e^{v_{2\epsilon}+h_{2\epsilon}}(1-e^{v_{1\epsilon}+h_{1\epsilon}})=\frac{4(N_1+\beta_1)}{(1+|x|^2)^2}=g_{1},\\
 \Delta v_{2\epsilon}+e^{v_{1\epsilon}+h_{1\epsilon}}(1-e^{v_{2\epsilon}+h_{2\epsilon}})=\frac{4(N_2+\beta_2)}{(1+|x|^2)^2}=g_{2}.
\end{split}
\end{equation}
\begin{theorem}
\label{mainthm3}Under the assumption of Theorem \ref{mainthm2}, we have
$$|v_{1\epsilon}|_{L^\infty(\mathbb R^2)}+|v_{2\epsilon}|_{L^\infty(\mathbb R^2)}\leq C $$ for some constant $C$ independent of $\epsilon$.
\end{theorem}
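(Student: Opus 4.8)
The plan is to treat Theorem \ref{mainthm3} as the exterior counterpart of the interior bound already contained in Theorem \ref{mainthm2}, splicing the two together with a Kelvin transform, exactly as the radial Lemma \ref{thmdeform1} splices its barrier estimate on $B_1$ with an inversion on $\{|x|\ge1\}$. First observe that $v_{i\epsilon}=(u_{i\epsilon}-f_{i\epsilon})+(N_i+\beta_i)\ln(1+|x|^2)$, so on any fixed ball $B_R$ the logarithmic term is bounded and Theorem \ref{mainthm2} (applied with $K=\overline{B_R}$) already yields $|v_{i\epsilon}|_{L^\infty(B_R)}\le C$ uniformly in $\epsilon$, across the vortices included. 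It therefore remains only to bound $v_{i\epsilon}$ on $\{|x|\ge R\}$ for one fixed large $R>\max_{i,j}|p_{ij}|$, with a constant independent of $\epsilon$. I would argue by contradiction: if no such uniform bound exists, then since Theorem \ref{mainthm2} already controls every fixed ball, the failure must occur at infinity, i.e. there are solutions $(u_{1n},u_{2n})$ and points $y_n$ with $|y_n|\to\infty$ and $|v_{in}(y_n)|\to\infty$.

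Next I would invert. Set $\varphi_{i\epsilon}(x)=u_{i\epsilon}(x/|x|^2)-2\beta_i\ln|x|$, which solves on $B_{1/R}$ the Kelvin-transformed system of the type appearing in Lemma \ref{leminfty},
\begin{equation*}
-\Delta\varphi_{1\epsilon}=|x|^{2\beta_2-4}e^{\varphi_{2\epsilon}}(1-|x|^{2\beta_1}e^{\varphi_{1\epsilon}}),\quad -\Delta\varphi_{2\epsilon}=|x|^{2\beta_1-4}e^{\varphi_{1\epsilon}}(1-|x|^{2\beta_2}e^{\varphi_{2\epsilon}}).
\end{equation*}
Using $h_{i\epsilon}(y)=-2\beta_i\ln|y|+O(1/|y|)$ for large $|y|$, a direct computation gives $v_{i\epsilon}(y)=\varphi_{i\epsilon}(y/|y|^2)+O(1/|y|)$, so the desired exterior bound for $v_{i\epsilon}$ is equivalent to a uniform bound for $\varphi_{i\epsilon}$ near $x=0$, and the contradiction hypothesis becomes $|\varphi_{in}(x_n)|\to\infty$ with $x_n=y_n/|y_n|^2\to0$. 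Moreover $\varphi_{in}$ is uniformly bounded on $\partial B_{1/R}$, again by Theorem \ref{mainthm2}.

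The decisive step is the upper bound. If $\varphi_{in}(x_n)\to+\infty$, then $0$ lies in the blow-up set of $\varphi_{in}$ and I would run the bubbling analysis of Section 4 verbatim on $\varphi_{in}$: Theorem \ref{thmBM1} produces a finite concentration set, and the Pohozaev identity of Lemma \ref{lemPI}, together with the simple blow-up and mass-equality arguments of Lemmas \ref{lemsimple3}--\ref{lemsame} and the no-mass-at-the-origin computation of Lemma \ref{leminfty}, quantizes the masses and yields precisely the relation \eqref{number}, namely $\frac{N_1}{\beta_1+N_1}+\frac{N_2}{\beta_2+N_2}=\frac{k-1}{k}$ for some integer $k\le\max(N_1,N_2)$. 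This contradicts the standing hypothesis \eqref{CS-4}, so $\varphi_{i\epsilon}$ must be uniformly bounded from above near the origin. This quantization is the genuinely hard part of the argument; everything surrounding it is soft.

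Finally, with $\varphi_{i\epsilon}$ bounded above near $0$ and bounded on $\partial B_{1/R}$, I would recover the lower bound exactly as in Lemma \ref{lembl1} and Lemma \ref{thmdeform1}: the Harnack inequality applied to the system rules out one component staying bounded while the other collapses, and the remaining possibility $\varphi_{in}\to-\infty$ locally uniformly is excluded because it would force $\int|x|^{2\beta_i-4}e^{\varphi_{in}}\to0$, contradicting the fixed positive value of $\int_{\mathbb R^2}e^{u_{in}}$ guaranteed by Lemma \ref{lemPI} and \eqref{CS-3}. Hence $\varphi_{i\epsilon}$, and therefore $v_{i\epsilon}$ on $\{|x|\ge R\}$, is uniformly bounded, which together with the interior bound from Theorem \ref{mainthm2} completes the proof.
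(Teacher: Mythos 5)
Your overall skeleton --- interior control from Theorem \ref{mainthm2}, Kelvin transform, an upper bound at the origin of the inverted coordinates, then recovery of the lower bound --- is exactly the paper's. The gap is in what you yourself call the decisive step. You assert that a blow-up of $\varphi_{in}$ at the origin can be fed ``verbatim'' into the Section 4 machinery and must reproduce the quantization \eqref{number}, hence contradict \eqref{CS-4}. That is not what happens, for two reasons. First, Theorem \ref{thmBM1} does not apply near $x=0$ in the inverted picture: the potentials there are $|x|^{2\beta_i-4}$, which fail to be in $L^\infty$ when $\beta_i<2$, so the Brezis--Merle alternative and the mass lower bounds behind $M_q,N_q$ are not available at the origin; a separate scaling adapted to the weight is required. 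Second, the configuration that produces \eqref{number} in Section 4 --- a ring of $|S|$ identical bubbles encircling the collapsing vortices, combined through the three mass balances \eqref{origin2}, \eqref{infty4}, \eqref{totalmass} --- is precisely the configuration already excluded by Theorem \ref{mainthm2} under \eqref{CS-4}; a concentration of $u_{in}$ at spatial infinity does not recreate that geometry, so no contradiction with \eqref{CS-4} can be extracted this way.

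What the paper actually does at this step is a dichotomy on the maximum point $y_{1n}$ of $\xi_{1n}$. If $\xi_{1n}(y_{1n})+2(\beta_1-1)\ln|y_{1n}|$ stays bounded, one rescales by $t_n=e^{-\lambda_{1n}/(2\beta_1-2)}$ (the scale dictated by the weight, as in Case 1 of Lemma \ref{leminfty}), obtains a limiting Liouville-type system, and the Pohozaev identity yields $\frac{2(\beta_1-1)}{L_1}+\frac{2(\beta_2-1)}{L_2}\le 1$; combined with the total-mass bounds $L_i\le 2(\beta_i+N_i)$ from Lemma \ref{lemPI} this forces $(\beta_1-1)(\beta_2-1)\le(N_1+1)(N_2+1)$, contradicting \eqref{CS-3} --- not \eqref{CS-4}. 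If instead that quantity tends to $+\infty$, there is a secondary concentration at the intermediate scale $|y|\sim|y_{1n}|^{-1}$; rescaling $u_{in}$ by $|y_{1n}|$ and invoking the no-mass-at-the-origin computation of Lemma \ref{lemorigin} gives $\int_{|y|\le d/|y_{1n}|}e^{u_{1n}}(1-e^{u_{2n}})\,dy=o(1)$, which is impossible because the interior estimate of Theorem \ref{mainthm2} guarantees $e^{u_{1n}}\ge c>0$ on a fixed large annulus while $|\{u_{2n}>-1\}|$ is uniformly bounded, so that integral has a uniform positive lower bound. Your proposal contains neither branch of this dichotomy, and the branch you do propose rests on a quantization that is not valid in this regime. (Your final lower-bound step is fine in spirit, though the paper obtains it more directly from elliptic estimates once the upper bound and the boundary control on $\partial B_{R_0}$ are in hand.)
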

\begin{proof}
By Theorem \ref{mainthm2}, we have $v_{i\epsilon}$ is bounded in $L^\infty_{loc}(\mathbb R^2)$. Consider the function
\begin{equation*}
\xi_{i\epsilon}(x)=u_{i\epsilon}\left(\frac x{|x|^2}\right)-2\beta_i\ln |x|,\text{ for }|x|\leq R_0:=(1+\max_{i,j}|p_{ij}|)^{-1}.
\end{equation*}
Obviously, $\xi_{i\epsilon}(x)\in L^\infty_{loc}(B_{R_0}\backslash \{0\})$ and
\begin{equation}\label{Global2}
\begin{split}
\Delta \xi_{1\epsilon}+|x|^{2\beta_2-4}e^{\xi_{2\epsilon}}(1-|x|^{2\beta_1}e^{\xi_{1\epsilon}})=0,\\
\Delta \xi_{2\epsilon}+|x|^{2\beta_1-4}e^{\xi_{1\epsilon}}(1-|x|^{2\beta_2}e^{\xi_{2\epsilon}})=0,
\end{split}\quad\quad\text{ in } |x|\leq R_0.
\end{equation}
We now show $\xi_{i\epsilon},i=1,2$ is bounded from above in $B_{R_0}(0)$. If not, we may assume a sequence of $\lambda_{1n}=\xi_{1n}(y_{1n})=\max_{|x|\leq R_0}\xi_{1n}\rightarrow +\infty$. In fact, we must have $\lambda_{2n}=\xi_{2n}(y_{2n})=\max_{|x|\leq R_0}\xi_{2n}\rightarrow +\infty$. Otherwise, by Harnack inequality, we will have $\xi_{1n}\rightarrow +\infty$ in $B_{R_0}$ which contradicts to $\xi_{1n}(x)\in L^\infty(\partial B_{R_0})$.
 \par We claim that
 $$\max(\xi_{1n}(y_{1n})+2(\beta_1-1)\ln |y_{1n}|,\xi_{2n}(y_{2n})+2(\beta_2-1)\ln |y_{2n}|)\rightarrow +\infty.$$
 Otherwise, without loss of generality, we can choose $t_n$ such that
 $$\xi_{1n}(y_{1n})+2(\beta_1-1)\ln t_n=0, \xi_{2n}(y_{2n})+2(\beta_2-1)\ln t_n\leq 0.$$
 Consider the scaled function $\zeta_{in}(x)=\xi_{in}(t_n x)-\xi_{in}(y_{in})$.  Set
 \begin{equation*}
 L_1=\lim_{n\rightarrow \infty} \int_{|x|\leq R_0}|x|^{2\beta_2-4}e^{\xi_{2n}}(1-|x|^{2\beta_1}e^{\xi_{1n}})dx, L_2=\lim_{n\rightarrow \infty}\int_{|x|\leq R_0}|x|^{2\beta_1-4}e^{\xi_{1n}}(1-|x|^{2\beta_2}e^{\xi_{2n}})dx.
 \end{equation*}
 Repeating the same arguments as in Lemma \ref{leminfty}, we must have
 $$\frac{2(\beta_1-1)}{L_1}+\frac{2(\beta_2-1)}{L_2}\leq 1. $$
 As $L_1\leq 2(\beta_1+N_1),L_2\leq 2(\beta_2+N_2)$, substituting this to the above inequality yields that
 $$(\beta_1-1)(\beta_2-1)\leq (N_1+1)(N_2+1)$$ which is a contradiction to \eqref{CS-3}. This proves the claim.
 \par We may assume $\xi_{1n}(y_{1n})+2(\beta_1-1)\ln |y_{1n}|\rightarrow +\infty$. Set $\bar u_{in}(x)=u_{in}(x/|y_{1n}|)-2\ln |y_{1n}|$.
 Noting $\bar u_{1n}(y_{1n}/|y_{1n}|)\rightarrow +\infty$ and the equations $\bar u_{in}$ satisfying, along a subsequence, $\bar u_{in}$ satisfies the blow up situation in Theorem \ref{thmBM1} and the blow up set $\bar S$ contains at least one non-zero point on $\mathbb S^1$. Then the proof of Lemma \ref{lemorigin} implies that
 $$o(1)=\int_{|x|\leq d} e^{\bar u_{1n}}(1-|y_{1n}|^2 e^{\bar u_{2n}})dx=\int_{|y|\leq d/|y_{1n}|}e^{u_{1n}}(1-e^{u_{2n}})dx.$$
 Theorem \ref{mainthm2} tells us that $u_{in}$ is uniformly bounded in $C^2_{loc}(\mathbb R^2\backslash B_R(0))$, $R>\max(|p_{ij}|+1)$. From $\int_{\mathbb R^2} e^{u_{2n}}dx\leq C<\infty$, one gets $|\{x|u_{2n}(x)>-1\}|\leq C_1$. Now we have
  \begin{equation*}
  \int_{R\leq|y|\leq C_2 R} e^{u_{1n}} (1-e^{u_{2n}})dx\geq c(1-e^{-1})(C_2 R^2-C_1)\geq C_0>0
  \end{equation*}
  for some fixed $C_2$ large enough. This yields a contradiction.
 \par  By now, we have proved that $\xi_{i}$ is bounded from above. Then if we note that $\xi_{i}\in L^\infty(\partial B_{R_0})$, by standard elliptic estimates, we can prove $\xi_{i}$ is uniformly bounded from below too.
\end{proof}
Due to Theorem \ref{mainthm3}, we now can calculate Leray-Schauder degree for \eqref{CS-5}. Recall the following Hilbert space defined in Section 2 for $\beta=\min(\beta_1,\beta_2,2)>1$
$$\mathcal{D}=\{v:\mathbb R^2\rightarrow \mathbb R\ |\   |v|_{\mathcal{D}}^2=\int_{\mathbb R^2}|\nabla v|^2 dx+\int_{\mathbb R^2}\frac{v^2}{(1+|x|^2)^{\beta}}dx<+\infty\}.$$
For every $v\in \mathcal D$, there holds that for any $\beta>1$
\begin{equation}\label{embed}
\ln \int_{\mathbb R^2}\frac{e^v}{(1+|x|^2)^\beta}dx\leq \frac{1}{8\pi \gamma}|\nabla v|_{L^2}^2+\bar v +C_{\gamma}
\end{equation}
for any positive $\gamma<2(\beta-1)$ if $1<\beta<2$ and $\gamma=2$ if $\beta\geq 2$. Here,
$$\bar v=\frac 1{\pi} \int_{\mathbb R^2}\frac{v}{(1+|x|^2)^\beta}dx.$$
The above inequality can be found in \cite{Kim2006}.
\begin{lemma}\label{lemaa}
Under the assumption of Theorem \ref{mainthm2}, we have
$$|v_{1\epsilon}|_{\mathcal D}+|v_{2\epsilon}|_{\mathcal D}\leq C$$
for some constant $C$ independent of $\epsilon$.
\end{lemma}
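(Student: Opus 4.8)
The plan is to estimate the two pieces of
$$|v_{i\epsilon}|_{\mathcal D}^2=\int_{\mathbb R^2}|\nabla v_{i\epsilon}|^2\,dx+\int_{\mathbb R^2}\frac{v_{i\epsilon}^2}{(1+|x|^2)^\beta}\,dx$$
separately, leaning entirely on the global $L^\infty$ bound already provided by Theorem \ref{mainthm3}. The weighted $L^2$ term is essentially free: since $\beta=\min(\beta_1,\beta_2,2)>1$, the weight $(1+|x|^2)^{-\beta}$ is integrable on $\mathbb R^2$, so
$$\int_{\mathbb R^2}\frac{v_{i\epsilon}^2}{(1+|x|^2)^\beta}\,dx\le |v_{i\epsilon}|_{L^\infty(\mathbb R^2)}^2\int_{\mathbb R^2}\frac{dx}{(1+|x|^2)^\beta}\le C$$
uniformly in $\epsilon$. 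Thus the problem reduces to bounding the Dirichlet energy $\int_{\mathbb R^2}|\nabla v_{i\epsilon}|^2\,dx$.

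For the gradient term I would test the $i$-th equation of \eqref{Global1} against $v_{i\epsilon}$ and integrate by parts over a large ball $B_R$. For $i=1$ this gives
$$\int_{B_R}|\nabla v_{1\epsilon}|^2\,dx=\int_{B_R}e^{u_{2\epsilon}}(1-e^{u_{1\epsilon}})v_{1\epsilon}\,dx-\int_{B_R}g_1 v_{1\epsilon}\,dx+\int_{\partial B_R}v_{1\epsilon}\,\partial_\nu v_{1\epsilon}\,dS,$$
where I have written $e^{v_{2\epsilon}+h_{2\epsilon}}(1-e^{v_{1\epsilon}+h_{1\epsilon}})=e^{u_{2\epsilon}}(1-e^{u_{1\epsilon}})\ge 0$; the case $i=2$ is symmetric. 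The two volume integrals are controlled directly: by Lemma \ref{lemPI} we have $\int_{\mathbb R^2}e^{u_{2\epsilon}}(1-e^{u_{1\epsilon}})\,dx=4\pi(\beta_1+N_1)$, while $\int_{\mathbb R^2}g_1\,dx=4\pi(N_1+\beta_1)$, so both densities are nonnegative and uniformly bounded in $L^1(\mathbb R^2)$. Hence each volume integral is at most $|v_{1\epsilon}|_{L^\infty}$ times a fixed constant, a bound independent of $\epsilon$ and $R$.

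The only genuine point is to show the boundary term $\int_{\partial B_R}v_{1\epsilon}\,\partial_\nu v_{1\epsilon}\,dS$ vanishes as $R\to\infty$, uniformly in $\epsilon$; this is where I expect the real work. The key observation is that $v_{1\epsilon}$ carries \emph{zero net charge}: the two identities just quoted give $\int_{\mathbb R^2}\Delta v_{1\epsilon}\,dx=\int_{\mathbb R^2}g_1\,dx-\int_{\mathbb R^2}e^{u_{2\epsilon}}(1-e^{u_{1\epsilon}})\,dx=0$. Since moreover $g_1=O(|x|^{-4})$ and $e^{u_{2\epsilon}}=O(|x|^{-2\beta_2})$ with $\beta_2>1$, the source $\Delta v_{1\epsilon}$ decays faster than $|x|^{-2}$; via the gradient of the Green representation this forces the leading $O(|x|^{-1})$ part of $\nabla v_{1\epsilon}$ to cancel, yielding $\nabla v_{1\epsilon}(x)=O(|x|^{-1-\delta})$ for some $\delta>0$, with constants uniform in $\epsilon$ because the source bounds are. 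Consequently
$$\left|\int_{\partial B_R}v_{1\epsilon}\,\partial_\nu v_{1\epsilon}\,dS\right|\le |v_{1\epsilon}|_{L^\infty}\cdot O(R^{-1-\delta})\cdot 2\pi R=O(R^{-\delta})\to 0.$$
Letting $R\to\infty$ gives $\int_{\mathbb R^2}|\nabla v_{1\epsilon}|^2\,dx\le C$, and the same argument on the second equation handles $v_{2\epsilon}$; combined with the weighted $L^2$ estimate this proves the lemma.

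I would flag the boundary-term analysis as the main obstacle, and emphasize that the exact cancellation $\int_{\mathbb R^2}\Delta v_{i\epsilon}\,dx=0$ coming from Lemma \ref{lemPI} is essential: without it one would only obtain the generic decay $\nabla v_{i\epsilon}=O(|x|^{-1})$, for which the boundary integral is merely $O(1)$ rather than $o(1)$. Note also that the embedding inequality \eqref{embed} is not needed here, since the a priori $L^\infty$ control from Theorem \ref{mainthm3} is stronger; it is precisely this pointwise bound, rather than a variational estimate, that makes the argument short.
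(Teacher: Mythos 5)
Your proof is correct and is exactly the argument the paper has in mind: the paper states only that "the proof is simply integrating by parts and using Theorem \ref{mainthm3}" and omits all details, and your write-up supplies precisely those details (the weighted $L^2$ term from the $L^\infty$ bound, the volume terms from the $L^1$ identities of Lemma \ref{lemPI}, and the decay of the boundary term from the zero-net-charge cancellation). No discrepancy to report.
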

The proof of Lemma \ref{lemaa} is simply integrating by parts and using Theorem \ref{mainthm3}. We omit the details here.
\par \textbf{The proof for Theorem \ref{mainthm1}:}
Now we can prove Theorem \ref{mainthm1} by Leray-Schauder degree theory. We now define a map as follows.
$$T(\epsilon,v_1,v_2)=(T_1(\epsilon,v_1,v_2),T_2(\epsilon,v_1,v_2)): \mathcal D\times\mathcal D\rightarrow \mathcal D\times\mathcal D$$
for $\epsilon\in [0,1]$ by
\begin{equation}\label{degreemap}
\begin{split}
& T_{1}(\epsilon,v_1,v_2)=(-\Delta+\sigma)^{-1}[e^{v_2+h_{2\epsilon}}(1-e^{v_1+h_{1\epsilon}})+\sigma v_1-g_1],\\
& T_{2}(\epsilon,v_1,v_2)=(-\Delta+\sigma)^{-1}[e^{v_1+h_{1\epsilon}}(1-e^{v_2+h_{2\epsilon}})+\sigma v_2-g_2]
\end{split}
\end{equation}where $\sigma=\frac {1}{(1+|x|^2)^\beta}$.
\par It is obvious that $T(\epsilon,v_1,v_2): \mathcal D\times\mathcal D\rightarrow \mathcal D\times \mathcal D$ is compact by \eqref{embed}. And by Lemma \ref{lemaa}, there exists a constant $R>0$ such that every zero of $I-T(\epsilon,v_1,v_2)$ is contained in a ball $\Omega_R=\{(v_1,v_2)\in \mathcal D\times\mathcal D\  |\  |v_1|_{\mathcal D}+|v_2|_{\mathcal D}<R\}$. Then the degree $\deg(I-T(\epsilon,v_1,v_2),\Omega_R,0)$ is well defined. Moreover, $I-T(\epsilon,v_1,v_2)$ is a continuous homotopy with respect to $\epsilon$ and preserves degree by Lemma \ref{lemaa}.
\par By the above arguments, we only need to calculate the Leray-Schauder degree of $I-T(0,v_1,v_2)$. It is well known that non-radial solutions of $I-T(0,v_1,v_2)=0$, if they exist, do not affect the calculation of $\deg(I-T(0,v_1,v_2),\Omega_R,0)$. See \cite{Wang1989} and references therein. Then by the proof of Theorem \ref{thmradial}, we have
$$\deg(I-T(1,v_1,v_2),\Omega_R,0)=\deg(I-T(0,v_1,v_2),\Omega_R,0)=-1.$$
This proves Theorem \ref{mainthm1}.

\section*{Acknowledgement}
The first author would like to thank Taida Institute for Mathematical Sciences (TIMS), National Taiwan University for the warm hospitality where this work was done.

\medskip
\medskip


\begin{thebibliography}{99}

\bibitem{BartolucciChenLinTarantello2004}
D. Bartolucci, C.-C. Chen, C.-S. Lin and G. Tarantello, Profile of blow-up solutions to mean field equation with sigualar data, Comm. Partial Differential Equations 29 (2004), pp. 1241-1265.

\bibitem{BrezisMerle1991}
H. Brezis and F. Merle, Uniform estimates and blow-up behavior for solutions of $-\Delta u=V(x)e^u$ in two dimensions. Comm. Partial Diff. Eq. 16 (8-9), pp. 1223-1253.

\bibitem{CaffarelliYang1995}
L. Caffarelli and Y. Yang, Vortex condensation in the Chern-Simons-Higgs model: an existence theorem, Comm. Math. Phys., 168 (1995), pp.321-336.

\bibitem{ChanFuLin2002}
H. Chan, C. Fu and C.-S. Lin, Non-topological multi-vortex solutions to the self-dual Chern-Simons-Higgs equation, Comm. Math. Phys., 231 (2002), pp. 1465-1507.

\bibitem{ChernChenLin2010}
J. Chern, Z. Chen and C.-S. Lin, Uniqueness of topological solutions and the structure of solutions for the Chern-Simons system with two Higgs particles, Comm. Math. Phys., 296 (2010), pp. 323-351.

\bibitem{ChenLin2002}
C.-C. Chen and C.-S. Lin, Sharp estimates for solutions of multi-bubbles in compact Riemann surfaces. Comm. Pure Appl. Math. 55 (2002), no. 6, 728-771.

\bibitem{Choe2007}
K. Choe, Asympototic behavior of condensate solutions in the Chern-Simons-Higgs theory, J.Math.Phys. 48 (2007) 103501.

\bibitem{ChoeKim2008}
K. Choe and N. Kim, Blow-up solutions of the self-dual Chern-Simons-Higgs vortex equation, Ann. Inst. H. Poincar$\acute{e}$ Anal. Non Lin$\acute{e}$aire 25 (2008), pp. 313-338.

\bibitem{ChoeKimLin2011}
K. Choe, N. Kim and C.-S. Lin, Existence of self-dual non-topological solutions in the Chern-Simons Higgs model,  Ann. Inst. H. Poincar$\acute{e}$ Anal. Non Lin$\acute{e}$aire 28 (2011), pp. 837-852.

\bibitem{Dunne1999}
G. V. Dunne, Aspects of Chern-Simoms theory, in Aspects topologiques de la physique en basse dimension/Topological aspects of low dimensional systems(Les Houches, 1998), EDP Sci., Les Ulis, 1999, pp. 177-263.

\bibitem{Dziarmaga1994}
J. Dziarmaga, Low energy dynamics of $[U(1)]^{N}$ Chern-Simons solitons, Phys. Rev. D 49 (1994), pp. 5469-5479.

\bibitem{HuangLin2013}
H.-Y. Huang and C.-S. Lin, Uniqueness of non-topological solutions for the Chern-Simons system with two Higgs particles, to appear in Kodai J.


\bibitem{JaffeTaubes1980}
A. Jaffe and C. Taubes, Vortices and monopoles, vol. 2 of Progress in Physics, Birkh\"auser Boston, Mass., 1980 Structure of static gauge theories.

\bibitem{Kim2006}
N. Kim, Existence of vortices in a self-dual gauged linear sigma model and its singular limit, Nonlinearity, 19 (2006), pp. 721-739.

\bibitem{KimLeeKoLeeMin1993}
C. Kim, C. Lee, P. Ko, B.-H. Lee and H. Min, Schr\"odinger fields on the plane with $[U(1)]^N$ Chern-Simons interactions and generalized self-dual solitons, Phys. Rev. D (3) 48 (1993), pp. 1821-1840.

\bibitem{LiShafrir1994}
Y. Li and I. Shafrir, Blowup analysis for solutions $-\Delta u=Ve^u$ in dimension two, Indiana Univ. Math. Jour. 43 (1994), pp. 1255-1270.

\bibitem{LinPonceYang2007}
C.-S. Lin, A. Ponce and Y. Yang, A system of elliptic equations arising in Chern-Simons field theory, Journal of Functional Analysis, 247 (2007), pp.289-350.



\bibitem{LinYan2013}
C.-S. Lin and S. Yan, Existence of bubbling solutions for Chern-Simons model on a torus, Arch. Ration. Mech. Anal., 207 (2013), no. 2, pp. 353-392.

\bibitem{Nirenberg2001}
L. Nirenberg, Topics in Nonlinear Functional Analysis, Courant Lecture Notes in Mathematics, American Mathematics Society, Rhode Island, 2001.

\bibitem{NolascoTarantello2000}
M. Nolasco and G. Tarantello, Vortex condensates for the $SU(3)$ Chern-Simons Theory, Comm. Math. Phy. 213 (2000), no. 3, pp. 599-639.

\bibitem{SpruckYang1992}
J. Spruck and Y. Yang, The exitence of nontopological solutions in the self-dual Chern-Simons theory, Comm. Math. Phys., 149 (1992), pp. 361-376.


\bibitem{Tarantello1996}
G. Tarantello, Multiple condensate solutions for the Chern-Simons-Higgs theory, J. Math. Phys., 37 (1996), pp. 3769-3796.

\bibitem{Hooft1979}
G. 't Hooft, A property of electric and magnetic flux in non-Abelian gauge theories, Nuclear Phys. B 153 (1979), no.1-2, pp. 141-160.


\bibitem{Wang1989}
Z. Wang, Symmetries and the calculations of degree, Chin. Ann. of Math. B 16 (1989), pp. 520-536.

\bibitem{Yanagida2001}
E. Yanagida,
Reaction-diffusion systems with skew-gradient structure,
Methods and Applications of Analysis,
8 (2001), pp. 209-226.

\bibitem{Yanagida2002}
E. Yanagida, Mini-maximizers in reaction-diffusion systems
with skew-gradient structure, J. Diff. Eqs., 179 (2002),
pp. 311-335.

\bibitem{Yang1997}
Y. Yang, the relativistic non-abelian Chern-Simons equations, Comm. Math. Phys. 186(1997), no.1,  pp.141-160.

\end{thebibliography}
\end{document}